\documentclass[amssymb,11pt]{amsart}
\usepackage{amscd,amssymb,euscript,amsthm}
%
%
\theoremstyle{plain}
\newtheorem{thm}{Theorem}[section] 
\newtheorem{cor}[thm]{Corollary}
\newtheorem{prop}[thm]{Proposition}
\newtheorem{lem}[thm]{Lemma}
\newtheorem{conj}[thm]{Conjecture}

\theoremstyle{definition}
\newtheorem{defn}[thm]{Definition}
\theoremstyle{remark}
\newtheorem{rem}[thm]{Remark}

%
%
\numberwithin{equation}{section}
%
%

%
%


\newcommand{\co}{\operatorname{conv}}

%
%
\def\<{\left<}
\def\>{\right>}
\def\cstar{$C^*$-algebra}
\begin{document}
\title[Hyperrigidity]{The noncommutative Choquet boundary II:\\
hyperrigidity}
\author{William Arveson}
%
%
\address{Department of Mathematics,
University of California, Berkeley, CA 94720}
\email{arveson@math.berkeley.edu}
%
\subjclass[2000]{Primary 46L07; Secondary 46L52}
\date{26 May, 2009}

\begin{abstract}  A (finite or countably infinite) set $G$ of generators 
of an abstract \cstar\ $A$ is called {\em hyperrigid} if  
for every faithful representation of $A$ 
on a Hilbert space $A\subseteq \mathcal B(H)$ and every sequence of unital completely positive 
linear maps $\phi_1, \phi_2,\dots$ from $\mathcal B(H)$ to itself,  
$$
\lim_{n\to\infty}\|\phi_n(g)-g\|=0, \forall g\in G \implies 
\lim_{n\to\infty}\|\phi_n(a)-a\|=0, \forall a\in A.  
$$
We show that one can determine whether a given 
set $G$ of generators is 
hyperrigid by examining the noncommutative Choquet boundary of the 
operator space spanned by $G\cup G^*$.  
We present a variety of concrete applications and discuss prospects for 
further development.  
\end{abstract}

\maketitle

\section{Introduction  }\label{S:in}

In a previous paper \cite{arvChoq} it was shown that every separable operator 
system has sufficiently many boundary 
representations, thereby providing a noncommutative counterpart of the 
function-theoretic fact that the closure of the Choquet boundary is the Silov boundary.  
Considering the central position of the latter in both potential theory and 
approximation theory, it is natural to expect corresponding 
applications of the noncommutative Choquet boundary  
to the theory of operator spaces.  In this paper we initiate 
a study of what might be called noncommutative approximation theory, focusing 
on the question: How 
does one determine whether a set of generators of a \cstar\ is hyperrigid?     

\begin{defn}\label{inDef1}
A finite or countably infinite set 
$G$ of generators of a \cstar\ $A$ is said to be {\em hyperrigid} if for every 
faithful representation $A\subseteq \mathcal B(H)$ of $A$ on a Hilbert space and 
every sequence  of unit-preserving 
completely positive (UCP) maps $\phi_n: \mathcal B(H)\to\mathcal B(H)$, $n=1,2,\dots$,  
\begin{equation}\label{inEq1}
\lim_{n\to\infty}\|\phi_n(g)-g\|=0, \ \forall g\in G \implies 
\lim_{n\to\infty}\|\phi_n(a)-a\|=0, \ \forall a\in A.  
\end{equation}
\end{defn}

We have lightened notation in this definition by identifying $A$ with its image $\pi(A)$ in a faithful 
nondegenerate representation $\pi: A\to \mathcal B(H)$ on a Hilbert space $H$.  
Significantly, hyperrigidity of a set $G$ of operators on a 
Hilbert space $H$ implies not only that (\ref{inEq1}) should 
hold for sequences of UCP maps $\phi_n$ defined on $\mathcal B(H)$, but also that the 
property should persist for every other faithful representation of $A$.  Note too 
that a set $G$ is hyperrigid iff the linear span of $G\cup G^*$ is hyperrigid, so that 
hyperrigidity is properly thought of as a property of self adjoint operator 
subspaces of a \cstar.  In 
principle, one could adjoin the identity to $G\cup G^*$ as well, but for many examples -- 
especially those involving sets of compact operators -- it is best not to adjoin 
the identity operator to $G$.  Hence 
we allow that a set $G$ of operators, its operator space and its generated \cstar \ may not 
contain a unit.  

The general characterization of hyperrigid generators given in Theorem 
\ref{mrThm1} provides the following criterion: {\em A separable operator system 
$S$ that generates a \cstar\ $A$ is hyperrigid iff every representation 
$\pi: A\to \mathcal B(H)$ on a separable Hilbert space $H$ has the {\em unique 
extension property} in the sense that the only unital completely positive (UCP) 
map $\phi: A\to\mathcal B(H)$ that satisfies $\phi\restriction_S=\pi\restriction_S$ 
is $\phi=\pi$ itself.  }

The simplest examples of hyperrigid 
generators $G$ are obtained by a direct application of this criterion.  
These examples are associated with 
``extremal" properties of the operators in $G$ which 
force the unique extension property (and therefore hyperrigidity) 
through a direct application of the Schwarz inequality and 
the Stinespring representation of UCP maps.  The  
following two results illustrate the point:  
They are proved in Section \ref{S:sa}.  

\begin{thm}\label{inThm3}
Let $X\in \mathcal B(H)$ be a self adjoint operator and let 
$\mathcal A$ be the \cstar\ generated by $X$.  Then $G=\{X, X^2\}$ is 
a hyperrigid generator for $\mathcal A$.  
\end{thm}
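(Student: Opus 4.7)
The plan is to apply the criterion for hyperrigidity stated just before Theorem \ref{inThm3}: it suffices to show that every representation $\pi\colon \mathcal A\to\mathcal B(K)$ on a separable Hilbert space $K$ has the unique extension property relative to the operator system $S=\operatorname{span}\{1,X,X^2\}$. So let $\phi\colon\mathcal A\to\mathcal B(K)$ be a UCP map with $\phi(X)=\pi(X)$ and $\phi(X^2)=\pi(X^2)$; the goal is to conclude $\phi=\pi$.

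The key step is Kadison's inequality (a one-variable instance of the Schwarz inequality for UCP maps): since $X=X^*$, we have
\[
\phi(X^2)\;\geq\;\phi(X)^2.
\]
But by hypothesis $\phi(X^2)=\pi(X^2)=\pi(X)^2=\phi(X)^2$, so equality holds. I would then invoke the standard fact — easily derived from the Stinespring decomposition $\phi(a)=V^*\sigma(a)V$ with $V$ an isometry and $\sigma$ a $*$-representation — that equality $\phi(X^2)=\phi(X)^2$ forces $X$ into the multiplicative domain of $\phi$. Concretely, equality rewrites as $V^*\sigma(X)(1-VV^*)\sigma(X)V=0$, hence $(1-VV^*)\sigma(X)V=0$, so the range of $V$ is invariant (and being self-adjoint-invariant, reducing) for $\sigma(X)$. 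This immediately yields $\phi(X^n)=V^*\sigma(X)^nV=\phi(X)^n$ for every $n\geq 1$.

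Combining with $\phi(X)=\pi(X)$ gives $\phi(X^n)=\pi(X)^n=\pi(X^n)$ for all $n\geq 1$, so $\phi$ and $\pi$ agree on every polynomial in $X$ (with or without constant term, depending on whether $\mathcal A$ is unital). Continuity of both maps on the norm-closure, which is all of $\mathcal A$, then gives $\phi=\pi$, establishing the unique extension property and hence hyperrigidity.

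I do not anticipate a serious obstacle: the argument is essentially an application of the equality case in the Schwarz inequality together with the multiplicative domain phenomenon. The only minor subtlety is the non-unital case, where one first extends $\phi$ to the unitization by sending $1$ to $1$; once that is done, the same multiplicative-domain argument runs without change.
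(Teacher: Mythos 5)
Your proposal is correct and follows essentially the same route as the paper: the paper's proof (Theorem \ref{saThm1}(i)) likewise reduces to the unique extension property via Theorem \ref{mrThm1}, writes $\phi$ in Stinespring form, and uses the identity $P\sigma(X)(\mathbf 1-P)\sigma(X)P=\phi(X^2)-\phi(X)^2=0$ to conclude that the compression subspace is reducing for $\sigma(X)$, hence that $\phi$ is multiplicative on the closed algebra generated by $\mathbf 1$ and $X$. Your explicit remarks on the multiplicative domain and the non-unital case are consistent with, and slightly more detailed than, what the paper writes.
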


\begin{thm}\label{inThm2}
Let $V_1,\dots, V_n\in \mathcal B(H)$ be an arbitrary set of isometries that generates 
a \cstar\ $\mathcal A$.  Then $G=\{V_1,\dots, V_n, V_1V_1^*+\cdots+V_nV_n^*\}$ is 
a hyperrigid generator for $\mathcal A$.  
\end{thm}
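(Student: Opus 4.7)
\textbf{Proof plan for Theorem~\ref{inThm2}.} By Theorem~\ref{mrThm1}, it suffices to verify that every representation $\pi : \mathcal{A} \to \mathcal{B}(K)$ on a separable Hilbert space has the unique extension property relative to the operator system $S = \spn(G \cup G^* \cup \{I\})$; that is, every UCP map $\phi: \mathcal{A} \to \mathcal{B}(K)$ with $\phi\restriction_S = \pi\restriction_S$ must coincide with $\pi$. Note that since each $V_i$ is an isometry, $I = V_i^* V_i \in \mathcal{A}$, so $\mathcal{A}$ is unital and the formulation makes sense.

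The heart of the argument is to place each generator $V_i$ inside the multiplicative domain $\mathcal{M}_\phi$ of $\phi$. The first Schwarz identity is free: $\phi(V_i^*V_i) = \phi(I) = I = \pi(V_i)^*\pi(V_i) = \phi(V_i)^*\phi(V_i)$. For the companion identity $\phi(V_iV_i^*) = \phi(V_i)\phi(V_i)^*$, I would apply Kadison--Schwarz to obtain the \emph{a priori} inequality $\phi(V_iV_i^*) - \phi(V_i)\phi(V_i)^* \geq 0$ for each $i$ separately, then sum over $i$ and use the hypothesis that $\phi$ agrees with $\pi$ on $\sum_i V_iV_i^* \in S$ to conclude that the total excess vanishes. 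A sum of positive operators that vanishes must vanish termwise, forcing equality in each individual Schwarz inequality and hence $V_i \in \mathcal{M}_\phi$.

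From there the proof closes mechanically: $\mathcal{M}_\phi$ is a C*-subalgebra of $\mathcal{A}$ by Choi's theorem, so containing $V_1,\dots,V_n$ forces $\mathcal{M}_\phi = \mathcal{A}$, which means $\phi$ is a $*$-homomorphism. Since $\phi$ and $\pi$ are then two $*$-homomorphisms agreeing on the generators, $\phi = \pi$ on all of $\mathcal{A}$. The only genuinely creative step is the middle one, and it explains precisely why one must adjoin the scalar-valued sum $\sum_i V_iV_i^*$ to $G$ rather than use the $V_i$ alone: without a hypothesis forcing that sum to be preserved, there is no mechanism to collapse the Schwarz inequalities $\phi(V_iV_i^*) \geq \phi(V_i)\phi(V_i)^*$ to equalities, and the $V_i$ need not lie in $\mathcal{M}_\phi$. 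I do not anticipate any serious technical obstacle beyond that observation.
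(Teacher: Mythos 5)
Your proposal is correct. The mechanism at its heart is exactly the one the paper uses: the condition $\phi(V_i)^*\phi(V_i)=\phi(V_i^*V_i)$ comes for free from the isometry relations, and the hypothesis on the single operator $\sum_i V_iV_i^*$ forces the sum of the (individually positive) Schwarz defects $\phi(V_iV_i^*)-\phi(V_i)\phi(V_i)^*$ to vanish, hence each one to vanish. Where you differ is in the packaging: you invoke Choi's multiplicative domain theorem as a black box to conclude that $\phi$ is multiplicative on the C*-algebra generated by the $V_i$, whereas the paper (Theorem \ref{ciThm1}) carries out the equivalent computation by hand inside a minimal Stinespring dilation $\phi(x)=V^*\sigma(x)V$, showing that $VH$ is invariant under each $\sigma(u_k)$ and $\sigma(u_k)^*$, hence reducing, hence equal to $K$ by minimality, so that $V$ is unitary and $\phi$ is a representation. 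Your route is shorter and isolates the one genuinely new step more cleanly; the paper's route is self-contained and makes the dilation-theoretic content explicit (and indeed the paper's remark after Theorem \ref{inThm3} acknowledges that the multiplicative-domain reformulation is available). Either way the final step is the same: a $*$-homomorphism agreeing with $\pi$ on generators equals $\pi$.
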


\begin{rem}  Theorem \ref{inThm3}  can be viewed as a 
noncommutative strengthening of a classic approximation-theoretic result 
of Korovkin: see Remark \ref{inRem1} for further discussion. 
The referee has pointed out that Theorem \ref{inThm3} can be formulated in terms of 
the multiplicative domains of certain UCP maps, and 
after that reformulation, Lemma 3.1 of \cite{JOR} gives a norm estimate that 
leads to an alternate proof of Theorem \ref{inThm3} when applied 
to the function system $\{\mathbf 1, x, x^2\}\subseteq C[a,b]$, where 
$a$ and $b$ are appropriate bounds on the spectrum 
of the operator $X$. 

Finally, note that Theorem \ref{inThm2} implies that  for every $n\geq 2$, 
the standard set of generators $G=\{V_1,\dots, V_n\}$ of the Cuntz \cstar\ $\mathcal O_n$ 
is hyperrigid.  The referee has also pointed out a related result of Neshveyev and 
St{\o}rmer (Theorem 6.2.6 of \cite{NeshStor}), concerning generating sets 
of unitary operators.  
\end{rem}

On the other hand, we emphasize that most hyperrigid operator systems $S\subseteq C^*(S)$ do 
not share the conspicuous extremal properties 
associated with  Theorems \ref{inThm3} and \ref{inThm2}, and 
one cannot 
establish hyperrigidity of the more subtle examples by such direct methods.    
The purpose of this paper is to identify the obstruction to hyperrigidity {\em in general} 
in terms of the noncommutative Choquet boundary.  We conjecture that this is the only 
obstruction in Section \ref{S:ncb}.  While we are unable to 
establish the conjecture in general, we do prove it 
when $C^*(S)$ has countable spectrum, and that leads 
to a variety of hyperrigidity results with distinctly new features. 
We now describe 
two more subtle examples which are 
concrete special cases of more general results that are proved in Sections \ref{S:cs} through \ref{S:cr}.  

\vskip0.1in
\noindent
{\bf Positive linear maps of matrix algebras.}
Building on work of Chandler Davis \cite{DaJen}, 
Choi showed in \cite{ChoiSch} that for a unit-preserving 
positive linear map 
$\phi$ of unital \cstar s, the inequality 
\begin{equation}\label{crEq4}
f(\phi(A))\leq \phi(f(A))
\end{equation}
holds for every function $f:(a,b)\to\mathbb R$ that is {\em operator convex} 
in the sense of Bendat-Sherman \cite{bendSher} and every self adjoint 
operator $A$ with spectrum in $(a,b)$.  
Note that the spectrum of $\phi(A)$ is also contained 
in $(a,b)$, so that one can form both $f(A)$ and $f(\phi(A))$ by way of 
the functional calculus.  In \cite{petz}, Petz asked when {\em equality} can hold 
in (\ref{crEq4}), and showed that if 
$f:(a,b)\to \mathbb R$ is an operator convex function that is {\em not} of the form $f(x)= ax+b$ and 
equality holds in (\ref{crEq4}),  then 
the restriction of $\phi$ to the algebra 
of polynomials in 
$A$ is multiplicative.  

We want to broaden Petz' question in the following way.  Fix a real valued 
continuous function $f: [a,b]\to \mathbb R$ defined on a compact interval.  
We say that $f$ is {\em rigid} if 
for every every self adjoint operator $A$ in a unital \cstar\ $\mathcal A$ whose spectrum 
is contained in $[a,b]$ and  
every unit preserving positive linear map $\phi: \mathcal A\to\mathcal B$ into 
another unital \cstar,  one has 
$$
f(\phi(A))=\phi(f(A))\implies \phi(A^n)=\phi(A)^n, \quad n=1,2,\dots.  
$$
The following result -- a consequence of Theorem \ref{crThm1} below -- 
characterizes the rigid functions with respect 
to maps on matrix algebras, that is, the \cstar s $\mathcal A=\mathcal B(H)$ with finite 
dimensional $H$:    

\begin{thm}\label{inThm4} For every 
real-valued function $f\in C[a,b]$, the following are equivalent:
\begin{enumerate}
\item[(i)] For every unital positive linear map 
of matrix algebras $\phi: \mathcal M\to \mathcal N$ and every self adjoint operator $A\in \mathcal M$ 
having spectrum in $[a,b]$, 
$$
\phi(f(A))=f(\phi(A))\implies \phi(A^n)=\phi(A)^n, \quad \forall n=1,2,\dots.  
$$
\item[(ii)] $f$ is either strictly convex or strictly concave.  
\end{enumerate} 
\end{thm}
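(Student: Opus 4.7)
The plan is to reduce the theorem to a fiberwise Jensen inequality applied along the eigenvectors of $\phi(A)$, and to exhibit a direct counterexample when $f$ has an affine chord. The implication (ii) $\Rightarrow$ (i) is the substantive half; the converse is elementary.

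For (ii) $\Rightarrow$ (i), I would assume $f$ is strictly convex (the strictly concave case reduces to this by replacing $f$ with $-f$). Fix a unital positive linear map $\phi\colon\mathcal M\to\mathcal N$ of matrix algebras and a self adjoint $A\in\mathcal M$ with spectrum in $[a,b]$, write the spectral decomposition $A=\sum_i\lambda_i P_i$, and assume $\phi(f(A))=f(\phi(A))$. Diagonalize $\phi(A)=\sum_j\mu_j|\eta_j\rangle\langle\eta_j|$ in an orthonormal basis of the Hilbert space on which $\mathcal N$ acts. For each $j$ the vector state $\psi_j(X):=\langle\phi(X)\eta_j,\eta_j\rangle$ is a state on $\mathcal M$, and its distribution under the functional calculus of $A$ is the probability measure $\nu_j:=\sum_i\langle\phi(P_i)\eta_j,\eta_j\rangle\delta_{\lambda_i}$ on $[a,b]$. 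The hypothesis yields
\[
\int f\,d\nu_j=\psi_j(f(A))=\langle f(\phi(A))\eta_j,\eta_j\rangle=f(\mu_j)=f\Bigl(\int x\,d\nu_j\Bigr),
\]
which is Jensen's inequality with equality; strict convexity forces $\nu_j=\delta_{\lambda_{i(j)}}$ for some index $i(j)$. Next I would extract operator consequences: positivity of $\phi(P_i)$ upgrades $\langle\phi(P_i)\eta_j,\eta_j\rangle=0$ to $\phi(P_i)\eta_j=0$ for $i\neq i(j)$, while $0\leq\phi(P_{i(j)})\leq I$ together with $\langle\phi(P_{i(j)})\eta_j,\eta_j\rangle=1$ forces $\phi(P_{i(j)})\eta_j=\eta_j$. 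Then on every basis vector,
\[
\phi(A^n)\eta_j=\sum_i\lambda_i^n\phi(P_i)\eta_j=\lambda_{i(j)}^n\eta_j=\mu_j^n\eta_j=\phi(A)^n\eta_j,
\]
so $\phi(A^n)=\phi(A)^n$ for every $n\geq 1$.

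For the converse, argue by contraposition. If $f$ is neither strictly convex nor strictly concave, select $x_1<x_2<x_3$ in $[a,b]$ with $(x_2,f(x_2))$ on the chord joining $(x_1,f(x_1))$ and $(x_3,f(x_3))$, set $\lambda:=(x_3-x_2)/(x_3-x_1)\in(0,1)$, and take $A=\diag(x_1,x_3)\in M_2$ together with the unital positive functional $\phi\colon M_2\to\mathbb C$ defined by $\phi(X)=\lambda X_{11}+(1-\lambda)X_{22}$. Then $\phi(A)=x_2$ and the collinearity hypothesis gives $\phi(f(A))=\lambda f(x_1)+(1-\lambda)f(x_3)=f(x_2)=f(\phi(A))$, whereas a direct computation yields $\phi(A^2)-\phi(A)^2=\lambda(1-\lambda)(x_1-x_3)^2>0$, refuting rigidity.

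The step I expect to require the most care is the promotion of the scalar Jensen identities to the operator identities $\phi(P_i)\eta_j=0$ and $\phi(P_{i(j)})\eta_j=\eta_j$; both rest on the elementary but essential observation that a positive operator $T$ with $\langle T\xi,\xi\rangle=0$ annihilates $\xi$, applied to $\phi(P_i)$ in the first case and to $I-\phi(P_{i(j)})$ in the second. Notably, complete positivity is never invoked, consistent with the theorem's formulation in terms of merely positive maps.
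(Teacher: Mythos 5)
Your argument is correct, and it reaches the theorem by a genuinely different and more elementary route than the paper. The paper deduces Theorem \ref{inThm4} from Theorem \ref{crThm1}, whose convex-implies-rigid half runs through the noncommutative Choquet boundary: one forms the representations $\pi(g)=g(A)$ and $\sigma(g)=g(\phi(A))$ of $C[a,b]$, uses Propositions \ref{crProp1} and \ref{crProp2} to see that strict convexity puts every point of $[a,b]$ in the Choquet boundary of $\operatorname{span}\{\mathbf 1,u,f\}$, and then invokes the stability of the unique extension property under direct sums (Proposition \ref{ncbProp1}) to conclude $\phi\circ\pi=\sigma$. Your fiberwise computation along an eigenbasis of $\phi(A)$ replaces all of that with the scalar equality case of Jensen's inequality applied to the explicit representing measures $\nu_j$, together with the observation that a positive operator with vanishing expectation at a vector annihilates that vector; in particular you never need Proposition \ref{ncbProp1} or any Schwarz-type inequality, and your counterexample lives in $M_2$ rather than in a matrix algebra of size up to $7$. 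What the paper's formulation buys in exchange is the template that generalizes: the same Choquet-boundary argument is what yields the extension to self adjoint operators with discrete spectrum mentioned after Theorem \ref{saThm1} and ties the statement to Conjecture \ref{ncbCon1}, whereas your Jensen argument is wedded to the finite spectral decomposition of $A$ and the finite eigenbasis of $\phi(A)$.

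The one place you owe an argument is the opening move of the converse: you select $x_1<x_2<x_3$ with $(x_2,f(x_2))$ on the chord through the other two graph points, but the hypothesis only gives failure of strict convexity and failure of strict concavity, each witnessed by a possibly \emph{strict} inequality at two unrelated chords. The claim is nevertheless true: the chord defect $D(x,z,y)=f(z)-\frac{(y-z)f(x)+(z-x)f(y)}{y-x}$ is continuous on the convex, hence connected, set $\{(x,z,y): a\le x<z<y\le b\}$, so if it never vanished it would have constant sign and $f$ would be strictly convex or strictly concave. This is precisely the intermediate-value argument inside the paper's proof of Proposition \ref{crProp2}; the paper, stopping there at a non-extreme point of the convex hull of the graph rather than at three collinear graph points, is forced into the larger counterexample of Proposition \ref{crProp3} built from a convex combination of up to six graph points. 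Once you supply that one connectedness line, your refinement genuinely simplifies this half as well.
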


Recall that a real function $f\in C[a,b]$ is said to be {\em strictly convex} if for 
any two distinct points $x\neq y$ in $[a,b]$ 
and every $t\in (0,1)$, 
$$
f(t\cdot x+(1-t)\cdot y)<t \cdot f(x)+(1-t)\cdot f(y).  
$$
$f$ is said to be {\em strictly concave} when $-f$ is strictly convex.

\begin{rem}[Relation to Petz' theorem]
It follows from 
the characterization of \cite{bendSher} that an operator convex function $f$ that is 
{\em not} an affine function must be real-analytic with $f^{\prime\prime}>0$ 
throughout $(a,b)$.    
Since such functions are 
strictly convex, Theorem \ref{inThm4} implies Petz' result for maps on 
matrix algebras.  
Since  most continuous strictly convex functions are {\em not} operator convex, 
this is a significant extension of the result of \cite{petz}.  

It is natural to ask if Theorem \ref{inThm4} holds for unital positive 
linear maps of more general unital \cstar s; indeed, we will show in Section \ref{S:cr} 
that the implication 
(i)$\implies$(ii) holds in that generality.  While we conjecture that the 
opposite implication (ii)$\implies$(i) holds as well, 
that has not been proved (see Section \ref{S:cr} 
for further discussion).  
\end{rem}

\noindent
{\bf Hyperrigid generators of $\mathcal K$.}  We conclude with 
a fourth hyperrigidity result about a familiar - perhaps the most familiar - compact operator.

\begin{thm}\label{inThm1}  Consider the Volterra integration operator $V$ acting on the Hilbert 
space $H=L^2[0,1]$,  
$$
Vf(x)=\int_0^x f(t)\,dt, \qquad f\in L^2[0,1].     
$$
It is well-known that $V$ is irreducible, 
generating the \cstar\ $\mathcal K$ of all compact operators.  
This operator has the following additional properties: 
\begin{enumerate}
\item[(i)] $G=\{V,V^2\}$ is hyperrigid; and in particular, for every sequence of unital 
completely positive maps 
$\phi_n: \mathcal B(H)\to \mathcal B(H)$ for which 
$$
\lim_{n\to\infty}\|\phi_n(V)-V\|=\lim_{n\to\infty}\|\phi_n(V^2)-V^2\|=0, 
$$
one has 
$$
\lim_{n\to\infty}\|\phi_n(K)-K\|=0
$$
for every compact operator $K\in\mathcal B(H)$.  
\item[(ii)] The smaller generating set $G_0=\{V\}$ of $\mathcal K$ is not hyperrigid.   
\end{enumerate}
\end{thm}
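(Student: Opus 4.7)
My plan is to apply the general criterion (Theorem \ref{mrThm1}): $G$ is hyperrigid iff every representation of $C^*(G)$ has the unique extension property (UEP) relative to the operator system $S := \operatorname{span}(\{I\}\cup G\cup G^*)$. So for both parts I need to study UCP extensions of representations of $\mathcal K + \mathbb C I$.

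For part (i), the key structural observation is that $P := V + V^*$ is the rank-one orthogonal projection onto the constants in $L^2[0,1]$: a direct computation gives $(V+V^*)f(x) = \int_0^1 f(t)\,dt$. Because $V$ and $V^*$ lie in $S$, so does $P$; and since $V^2, V^{*2}\in S$, expanding $P = P^2 = (V+V^*)^2$ puts $V^*V + VV^* = P - V^2 - V^{*2}$ into $S$ as well. Granted this identity, I would then run the standard multiplicative-domain argument: for any representation $\pi$ of $\mathcal K + \mathbb C I$ and any UCP $\phi$ with $\phi|_S = \pi|_S$, the two Schwarz inequalities $\phi(V^*V)\geq \phi(V)^*\phi(V) = \pi(V^*V)$ and $\phi(VV^*)\geq \pi(VV^*)$ sum to an equality and must therefore be individual equalities. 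Hence $V$ lies in the multiplicative domain of $\phi$; since that domain is a $C^*$-subalgebra, it contains $C^*(V,I)=\mathcal K + \mathbb C I$, and $\phi$ is a $*$-homomorphism which, agreeing with $\pi$ on the generator $V$, must coincide with $\pi$.

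For part (ii), I would exhibit a single failure of the UEP for the smaller operator system $\operatorname{span}\{I, V, V^*\}$. The most economical choice of representation is the one-dimensional counit $\pi_0: \mathcal K + \mathbb C I \to \mathbb C$ that kills $\mathcal K$ and sends $I$ to $1$. To show that $\pi_0$ has two distinct UCP extensions, use a vector state $\phi(T) = \langle Tf, f\rangle$ with a real-valued unit vector $f$ orthogonal to the constants (for example $f(x) = \sqrt{3}(2x-1)$). A one-line integration by parts gives $\langle Vf,f\rangle = \tfrac12\bigl(\int_0^1 f\bigr)^2 = 0$, so $\phi$ agrees with $\pi_0$ on $\{I,V,V^*\}$, while $\phi \neq \pi_0$ because $\phi$ is plainly nonzero on compact operators.

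The main obstacle is really the structural step in part (i): recognizing that the extra generator $V^2$ is precisely what brings $V^*V + VV^*$ into the operator system via the relation $P^2 = P$. The rest is a routine application of Schwarz and Choi's multiplicative-domain theorem; and part (ii) becomes transparent once one notices that the quadratic form $f\mapsto \langle Vf,f\rangle$ depends on $f$ only through its mean.
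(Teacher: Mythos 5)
Your proposal is correct, and part (i) takes a genuinely different --- and considerably more elementary --- route than the paper. The paper proves (i) as a special case of Theorem \ref{voThm1}, whose proof runs through the full noncommutative Choquet boundary machinery: one verifies that $\pi_\infty$ is a boundary representation by exhibiting a strictly positive operator in the operator system (Corollary \ref{gnCor1}), that the identity representation is strongly peaking (Theorem \ref{poThm1}), and then invokes the countable-spectrum theorem through Corollary \ref{poCor1}. Your observation that $V+V^*$ is the rank-one projection $E$ onto the constants, so that
$$
VV^*+V^*V=(V+V^*)^2-V^2-V^{*2}=E-V^2-V^{*2}=(V+V^*)-V^2-V^{*2}
$$
lies in the operator system spanned by $\{\mathbf 1, V,V^*,V^2,V^{*2}\}$, short-circuits all of this: the Schwarz/multiplicative-domain argument then yields the unique extension property for \emph{every} representation of $\mathcal K+\mathbb C\cdot\mathbf 1$ at once (including $\pi_\infty$), in exactly the style of the paper's proofs of Theorems \ref{inThm3} and \ref{inThm2}. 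This is worth noting, since the paper explicitly asserts that the proof of Theorem \ref{inThm1} ``makes essential use of the noncommutative Choquet boundary''; your argument shows that for the Volterra operator itself it does not. The trade-off is generality: your trick hinges on $(\operatorname{Re}V)^2\in\operatorname{span}\{\mathbf 1,\operatorname{Re}V\}$, i.e.\ on $\operatorname{Re}V$ having at most two spectral points, and it does not extend to the operators of Theorem \ref{voThm1}, where $\operatorname{Re}V$ is merely finite-rank positive. For part (ii) you follow the same strategy as the paper --- show that $\pi_\infty$ fails the unique extension property on $\operatorname{span}\{\mathbf 1,V,V^*\}$ and conclude via Corollary \ref{ncbCor1} --- but your single vector state $\omega_f$ with $f$ real, of unit norm and mean zero (using $\langle Vf,f\rangle=\tfrac12\bigl(\int_0^1f\bigr)^2$) is a cleaner witness than the paper's two-vector construction, which is built to handle the general hypotheses of Theorem \ref{voThm1}.
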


While the hyperrigidity property (i) of $\{V, V^2\}$ 
formally resembles the 
hyperrigidity property of $\{X, X^2\}$ in 
Theorem \ref{inThm3}, the two settings are fundamentally different because 
$V$ is not a self adjoint operator.  Indeed, while 
Theorem 
\ref{inThm3} is more or less a direct consequence of the Schwarz inequality and 
Stinespring's theorem, the proof of Theorem \ref{inThm1} makes essential 
use of the noncommutative Choquet boundary (see  Corollary \ref{voCor1}, 
a consequence of the more general Theorem \ref{voThm1}).  

The paper is organized into three parts.  Part 1 is relatively short and 
contains the basic characterization 
hyperrigid operator systems.  Using that characterization, we discuss two of the simplest 
examples of hyperrigid generators and prove Theorems \ref{inThm3} and \ref{inThm2}.  

In order to deal with the more subtle aspects of hyperrigid generators 
it is necessary to bring in the 
noncommutative Choquet boundary, and Part 2 is devoted to those issues.  
We show how boundary representations are involved in the 
obstruction to hyperrigidity in Corollary \ref{ncbCor1}, and following that, 
we conjecture that  
this is the {\em only} obstruction in general.  
We are unable to prove the conjecture in general, but we do prove it for generators 
of \cstar s that have countable spectra (Theorem \ref{csThm1}).  
When the generated \cstar\ is not unital, there is an additional 
obstruction associated with the ``point at infinity" and we identify that obstruction 
in concrete operator-theoretic terms 
in Theorem \ref{gnThm1}.  In Section \ref{S:po} we introduce the noncommutative counterparts 
of peak points and show how one uses them to identify boundary representations 
for examples involving compact operators in Theorem \ref{poThm1}.  

When a set $G$ of operators generates a commutative \cstar\ $\cong C(X)$, it is possible 
to formulate a ``localized" version of Conjecture \ref{ncbCon1}.  Part \ref{P:local} is 
devoted to a discussion of this kind of localization, and in Theorem \ref{ueThm1} we 
prove an appropriate local version of Conjecture \ref{ncbCon1}.  

We work extensively with representations $\pi: A\to \mathcal B(H)$ of \cstar s $A$ 
on Hilbert spaces $H$ throughout this paper, 
and we require that all representations should be nondegenerate.  Thus, 
$H$ should be the closed linear span of the set of vectors 
$\{\pi(a)\xi: a\in A, \ \xi\in H\}$; and if 
$A$ has a unit $\mathbf 1$ then this entails $\pi(\mathbf 1)=\mathbf 1_H$.  

Finally, a word about notation.  When dealing with abstract \cstar s $A$, it is customary 
to refer to elements of $A$ with lower case letters $a\in A$, while when 
dealing with \cstar s of operators $\mathcal A\subseteq\mathcal B(H)$ it seems more 
appropriate to refer to operators with upper case letters $A\in\mathcal A$, as 
we have already done in the introduction.  
Of course, the two usages are inconsistent.  
But it seems punctilious to insist on referring to an 
operator on a Hilbert space $H$ with $a\in \mathcal B(H)$, and we 
revert at times (in Sections \ref{S:vo} and \ref{S:cr})
to more traditional operator-theoretic notation.  Hopefully, 
this will not cause problems for the reader.

\begin{rem}[Quantizing Korovkin's theorem]\label{inRem1}
When specialized appropriately, Theorem \ref{inThm3} provides a noncommutative strengthening of 
a classical theorem of approximation theory.  
To review that briefly, a seminal theorem of P. P. Korovkin 
\cite{kor1}, \cite{kor2} makes the following assertion: If a sequence 
of positive linear maps 
$\phi_1,\phi_2,\dots:C[0,1]\to C[0,1]$ has the property 
$$
\lim_{n\to\infty}\|\phi_n(f_k)-f_k\|=0,\qquad k=0,1,2, 
$$
for the three functions $f_0(x)=1$,  $f_1(x)=x$,  $f_2(x)=x^2$, then 
$$
\lim_n\|\phi_n(g)-g\|= 0, \qquad \forall\ g\in C[0,1].
$$  
Korovkin's theorem 
generated considerable activity among researchers in approximation theory, and 
far-reaching generalizations were discovered  during the 1960s, following the realization that 
the fundamental principle underlying it is that every point of the unit interval 
is a peak point for the $3$-dimensional function system $[1,x,x^2]\subseteq C[0,1]$.   
The generalizations 
make essential use of the Choquet boundary, and in one way or another, 
those we have seen use 
the fact that the real functions in $C(X)$ form a {\em lattice}.  
We will not summarize those developments here, but refer the reader to 
\cite{bauerBd}, \cite{bauDon}, \cite{phelps}, \cite{phelps2}, \cite{saskin}  and 
the survey \cite{donner}.

Theorem \ref{inThm3} strengthens    Korovkin's theorem in a nontrivial way.  
To see that in concrete terms, 
consider the multiplication operator $X$ on $L^2[0,1]$, 
$$
(X\xi)(t)=t\cdot\xi(t), \qquad t\in [0,1], \quad \xi\in L^2[0,1].
$$  
For every 
sequence of UCP maps $\phi_1, \phi_2,\dots:\mathcal B(L^2[0,1])\to\mathcal B(L^2[0,1])$ that satisfies  
\begin{equation}\label{inEq2}
\lim_{n\to\infty}\|\phi_n(X)-X\|=\lim_{n\to\infty}\|\phi_n(X^2)-X^2\|=0, 
\end{equation}
Theorem \ref{inThm3} implies that $\phi_n(Y)$ 
converges in norm to $Y$ for every multiplication operator $Y=M_f$ 
with $f\in C[0,1]$.  Of course, if each of the given maps $\phi_n$ 
leaves the commutative \cstar\ $\mathcal A=\{M_f: f\in C[0,1]\}$ invariant, then 
this would follow from Korovkin's  theorem.  However we do {\em not} assume that; 
indeed, the spaces $\phi_n(\mathcal A)$ need not commute with $X$ or 
with each other.    
If one attempts to use the methods of 
classical approximation theory to prove this operator-theoretic 
result, one finds that the argument breaks down precisely because a pair of self 
adjoint operators $A, B$ acting on a Hilbert space need not have a least upper bound 
or greatest lower bound, even when $AB=BA$.   
\end{rem}

 Finally, I thank Erling St{\o}rmer for helpful comments on a draft of 
this paper.  
 This is the second of a series of papers 
in which applications of the noncommutative Choquet boundary to the theory 
of operator spaces are developed.    

\part{Basic results}

\section{Characterization of hyperrigidity }\label{S:mr}

We now prove the basic characterization of hyperrigid operator systems.  

\begin{thm}\label{mrThm1}  For every separable operator system $S$ that generates 
a \cstar\ $A$, the following are equivalent:
\begin{enumerate}
\item[(i)] $S$ is hyperrigid.  
\item[(ii)]  For every nondegenerate representation $\pi: A\to \mathcal B(H)$ on a 
separable Hilbert space and every sequence $\phi_n:A\to \mathcal B(H)$ of UCP maps,  
$$
\lim_{n\to\infty}\|\phi_n(s)-\pi(s)\|=0\  \forall s\in S\implies 
\lim_{n\to\infty}\|\phi_n(a)-\pi(a)\|=0\ \forall a\in A.  
$$
\item[(iii)] For every nondegenerate representation $\pi: A\to \mathcal B(H)$ on a separable 
Hilbert space, $\pi\restriction_S$ has the unique extension property.  
\item[(iv)] For every unital \cstar \ $B$,  every unital homomorphism of \cstar s $\theta: A\to B$ 
and every UCP map $\phi: B\to B$, 
$$
\phi(x)=x\ \forall x\in \theta(S)\implies  \phi(x)=x \ \forall x\in \theta(A).   
$$
\end{enumerate}
\end{thm}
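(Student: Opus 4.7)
The plan is to close the loop through the chain (i) $\Leftrightarrow$ (ii) $\Rightarrow$ (iii) $\Rightarrow$ (iv) $\Rightarrow$ (ii). The first two implications are routine: for (i) $\Leftrightarrow$ (ii) one uses Arveson's extension theorem to lift any UCP map $\phi_n\colon A\to\mathcal B(H)$ to a UCP map on all of $\mathcal B(H)$ (the converse restricts trivially); for (ii) $\Rightarrow$ (iii) one applies (ii) to the constant sequence $\phi_n=\phi$, where $\phi$ is any UCP extension of $\pi|_S$, to conclude $\phi=\pi$ on $A$.

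For (iii) $\Rightarrow$ (iv), fix $a\in A$ and a state $\omega$ of $B$, and form the GNS triple $(\pi_\omega,H_\omega,\xi_\omega)$ of $B$. Because $A$ is separable, $K := \overline{\pi_\omega(\theta(A))\xi_\omega}$ is a separable $\pi_\omega(\theta(A))$-invariant subspace of $H_\omega$ containing $\xi_\omega$ (using that $\theta$ is unital). The restricted representation $\pi := (\pi_\omega\circ\theta)|_K\colon A\to\mathcal B(K)$ is nondegenerate, and the compression $\phi'(a) := P_K\,\pi_\omega(\phi(\theta(a)))|_K$ is a UCP map that coincides with $\pi$ on $S$, since $\phi$ fixes $\theta(S)$ and $K$ is $\pi_\omega(\theta(A))$-invariant. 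By (iii), $\phi'=\pi$ on all of $A$; evaluating this identity at the vector $\xi_\omega$ gives $\omega(\phi(\theta(a)))=\omega(\theta(a))$, and since $\omega$ was arbitrary, $\phi(\theta(a))=\theta(a)$.

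The main obstacle is (iv) $\Rightarrow$ (ii), where one must convert sequential norm convergence into a single equation inside a $C^*$-algebra. Set $\mathcal B_\infty := \ell^\infty(\mathbb N,\mathcal B(H))/c_0(\mathbb N,\mathcal B(H))$. After first replacing $\pi$ by $\pi\oplus\pi_u$ for some faithful representation $\pi_u$ of $A$ (so we may assume $\pi$ faithful), the diagonal $\ast$-homomorphism $\tilde\pi\colon a\mapsto[(\pi(a))_n]\in\mathcal B_\infty$ is injective. Faithfully embed $\mathcal B_\infty\hookrightarrow\mathcal B(K)$ and set $\theta := \tilde\pi\colon A\to\mathcal B(K)$. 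The UCP map $\tilde\phi\colon a\mapsto[(\phi_n(a))_n]$ agrees with $\theta$ on $S$ by the hypothesis $\|\phi_n(s)-\pi(s)\|\to 0$. Hence $\psi(\theta(a)) := \tilde\phi(a)$ is a well-defined UCP map from the unital $C^*$-subalgebra $\theta(A)\subseteq\mathcal B(K)$ into $\mathcal B(K)$ that is the identity on $\theta(S)$; Arveson's extension theorem then produces a UCP $\Phi\colon\mathcal B(K)\to\mathcal B(K)$ extending $\psi$ and still fixing $\theta(S)$. Applying (iv) to the triple $(\mathcal B(K),\theta,\Phi)$ yields $\Phi|_{\theta(A)}=\mathrm{id}$, i.e.\ $\tilde\phi(a)=\tilde\pi(a)$ in $\mathcal B_\infty$, which is precisely $\|\phi_n(a)-\pi(a)\|\to 0$. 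The delicate point is setting up this framework: (iv), a statement about a single UCP map on a unital $C^*$-algebra, is leveraged by producing the right auxiliary $C^*$-algebra in which convergence becomes equality.
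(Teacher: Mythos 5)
Your overall architecture is sound and the two substantive implications are handled correctly. Your (iv)$\Rightarrow$(ii) is in essence the paper's argument for (iv)$\Rightarrow$(i): the same passage to $\ell^\infty(\mathcal B(H))/c_0(\mathcal B(H))$ converts point-norm convergence into a single fixed-point equation; the only difference is that, since your $\phi_n$ are defined only on $A$, you must first make $\pi$ faithful and invoke Arveson's extension theorem to obtain a UCP map on an ambient $\mathcal B(K)$ before (iv) can be applied --- both of which you do. Your (iii)$\Rightarrow$(iv) is genuinely different from the paper's: the paper forms the separable $\phi$-invariant $C^*$-subalgebra $B_0=C^*(\theta(A)\cup\phi(\theta(A))\cup\phi^2(\theta(A))\cup\cdots)$, represents it faithfully on a separable space, and extends $\phi$ by Arveson's theorem; you instead argue state by state via GNS, compressing to the separable cyclic subspace $K=\overline{\pi_\omega(\theta(A))\xi_\omega}$ (which is reducing for $\pi_\omega(\theta(A))$, so the compression of $\pi_\omega\circ\phi\circ\theta$ really does agree with $\pi_\omega\circ\theta$ on $S$) and then separating points of $B$ with states. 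Your version avoids the extension theorem in this step and is arguably more elementary; the paper's gives the identity $\phi(\theta(a))=\theta(a)$ in one stroke rather than one state at a time. Both are correct.

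The one place where your sketch has a real gap is (i)$\Rightarrow$(ii). In (ii) the representation $\pi$ need not be faithful, and then ``lifting $\phi_n\colon A\to\mathcal B(H)$ to a UCP map on $\mathcal B(H)$'' does not put you in the situation of Definition \ref{inDef1}: hyperrigidity quantifies over \emph{faithful} representations $A\subseteq\mathcal B(H)$ and over UCP maps of $\mathcal B(H)$ that nearly fix the copy of $S$ inside $\mathcal B(H)$, and when $\pi$ has a kernel there is no such copy ($\phi_n\circ\pi^{-1}$ is undefined). The repair, which is exactly what the paper does, is to fix a faithful representation $\sigma$ of $A$ on a separable $K$, pass to the faithful representation $\sigma\oplus\pi$, define $\omega_n\colon\sigma(a)\oplus\pi(a)\mapsto\sigma(a)\oplus\phi_n(a)$, extend these to $\mathcal B(K\oplus H)$, and apply (i) there; the conclusion for $\phi_n$ follows since $\|\sigma(a)\oplus\phi_n(a)-\sigma(a)\oplus\pi(a)\|=\|\phi_n(a)-\pi(a)\|$. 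A smaller point of the same kind: in (ii)$\Rightarrow$(i) the Hilbert space of Definition \ref{inDef1} is not assumed separable, so ``restricts trivially'' should read ``restrict, then cut down to a faithful separable representation of the separable $C^*$-algebra generated by $A$ together with $\bigcup_n\phi_n(A)$.'' Both repairs are routine, but your loop needs both directions of (i)$\Leftrightarrow$(ii), so this step should be written out.
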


\begin{proof}Since the implication (ii)$\implies$(iii) is trivial, 
we prove (i)$\implies$(ii) and (iii)$\implies$(iv)$\implies$(i).  

(i)$\implies$(ii): Let $\pi: A\to \mathcal B(H)$ be a nondegenerate representation 
on a separable Hilbert space and let $\phi_n: A\to \mathcal B(H)$ be a 
sequence of UCP maps such that $\|\phi_n(s)-\pi(s)\|\to 0$ for all $s\in S$.  

Let 
$\sigma: A\to\mathcal B(K)$ be a faithful representation 
of $A$ on another separable space $K$.  Then $\sigma\oplus\pi: A\to\mathcal B(K\oplus H)$ 
is a faithful representation, so that each of the linear maps 
$\omega_n: (\sigma\oplus\pi)(A)\to\mathcal B(K\oplus H)$ 
$$
\omega_n: \sigma(a)\oplus\pi(a)\mapsto \sigma(a)\oplus\phi_n(a), \qquad a\in A, 
$$
is unit preserving and completely positive.  By the extension theorem 
of \cite{arvSubalgI} $\omega_n$ can be extended to a UCP map 
$\tilde\omega_n: \mathcal B(K\oplus H)\to\mathcal B(K\oplus H)$.  Since 
$\phi_n\restriction_S$ converges to $\pi\restriction_S$ point-norm, 
$\tilde\omega_n$ converges point-norm to the 
identity map on $(\sigma\oplus\pi)(S)$.  So by hypothesis (i), $\tilde\omega_n$ 
must converge point-norm to the identity map on $(\sigma\oplus\pi)(A)$.  We conclude that  
for every $a\in A$, 
\begin{align*}
\limsup_{n\to\infty}\|\phi_n(a)-\pi(a)\|&\leq 
\limsup_{n\to\infty}\|\sigma(a)\oplus\phi_n(a)-\sigma(a)\oplus\pi(a)\|
\\
&=\lim_{n\to\infty}\|\tilde\omega_n(\sigma(a)\oplus\pi(a))-\sigma(a)\oplus\pi(a)\|
=0, 
\end{align*}
hence $\phi_n$ converges point-norm to $\pi$ on $A$.

(iii)$\implies$(iv):  Let $\theta: A\to B$ be a unit preserving $*$-homomorphism 
of \cstar s, 
and let $\phi: B\to B$ be a UCP map that satisfies $\phi(\theta(s))=\theta(s)$, 
$s\in S$.  We have to show that 
\begin{equation}\label{mrEq2}
\phi(\theta(a))=\theta(a), \qquad a\in A.
\end{equation}

For that, let $B_0$ be the {\em separable} \cstar\ of $B$ generated by 
$$
\theta(A)\cup \phi(\theta(A))\cup \phi^2(\theta(A))\cup\cdots.  
$$
By its construction, $\phi(B_0)\subseteq B_0$.  Since $B_0$ is separable, 
it has a faithful representation on some separable Hilbert space $H$, and after 
making the obvious identification we may assume that $B_0\subseteq\mathcal B(H)$.  

By the extension theorem of \cite{arvSubalgI}, there is a UCP map 
$\tilde\phi: \mathcal B(H)\to \mathcal B(H)$ that restricts to $\phi$ 
on $B_0$, and in particular $\tilde\phi(\theta(s))=\theta(s)$ for $s\in S$.  
Since $a\in A\mapsto \theta(a)\in \mathcal B(H)$ is a representation on 
a separable Hilbert space, hypothesis (iii) implies that $\tilde\phi$ 
must fix $\theta(A)$ elementwise.  We conclude that 
$\phi(\theta(a))=\tilde\phi(\theta(a))=\theta(a)$, $a\in A$, and (\ref{mrEq2}) is proved.

(iv)$\implies$(i):  Suppose that $A\subseteq \mathcal B(H)$ is faithfully 
represented on some Hilbert space $H$, and $\phi_1, \phi_2, \dots: \mathcal B(H)\to\mathcal B(H)$ 
is a sequence of UCP maps  satisfying 
$\lim_n\|\phi_n(s)-s\|=0$ for all $s\in S$.  We have to prove:  
\begin{equation}\label{mrEq1}
\lim_{n\to\infty}\|\phi_n(a)-a\|=0,\qquad \forall\ a\in A.  
\end{equation}
To that end, write $B=\mathcal B(H)$, let 
$\ell^\infty(B)$ be the \cstar\ of all bounded sequences 
with components in $B$ and let $c_0(B)$ be the ideal of all sequences in $\ell^\infty(B)$ 
that converge to zero in norm.  

Consider the UCP map $\phi_0: \ell^\infty(B)\to \ell^\infty(B)$ defined by 
$$
\phi_0(b_1, b_2, b_3, \dots)=(\phi_1(b_1), \phi_2(b_2), \phi_3(b_3),\dots).   
$$
This map carries the ideal $c_0(B)$ into itself, hence it promotes to a UCP map 
of the quotient 
$\phi:\ell^\infty(B)/c_0(B)\to\ell^\infty(B)/c_0(B)$ 
by way of 
$$
\phi(x+c_0(B))=\phi_0(x)+c_0(B), \qquad x\in \ell^\infty(B).
$$ 

Now consider the natural embedding $\theta: A\to\ell^\infty(B)/c_0(B)$,  
$$
\theta(a)=(a,a,a,\dots)+c_0(B).  
$$  
By hypothesis, $\|\phi_n(s)-s\|\to 0$ as $n\to \infty$ for $s\in S$, and therefore 
$$
\phi(\theta(s))=(\phi_1(s), \phi_2(s), \dots)+c_0(B)=
(s,s,\dots)+c_0(B)=\theta(s).  
$$
Hence $\phi$ restricts to the identity map on $\theta(S)$.  

Applying hypothesis (iv) to the inclusions 
$$
\theta(S)\subseteq \theta(A)\subseteq \ell^\infty(B)/c_0(B)
$$ 
and the UCP map $\phi: \ell^\infty(B)/c_0(B)\to \ell^\infty(B)/c_0(B)$, 
we conclude that $\phi$ must fix every element of $\theta(A)$.  Since  
$\theta(a)=(a,a,\dots)+c_0(B)$ and 
$$
\phi(\theta(a))=(\phi_1(a),\phi_2(a),\dots)+c_0(B), 
$$
we must have $(\phi_1(a)-a, \phi_2(a)-a,\dots)\in c_0(B)$, and (\ref{mrEq1}) follows.
\end{proof}

It is significant that hyperrigidity is preserved under passage to quotients: 

\begin{cor}\label{mrCor2}  Let $S$ be a hyperrigid separable operator system 
with generated \cstar\ $A$, let $K$ be an ideal in $A$ and let 
$a\in A\mapsto \dot a\in A/K$ be the quotient map.  Then $\dot S$ is a hyperrigid operator 
system in $A/K$.  
\end{cor}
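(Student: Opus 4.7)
The plan is to verify characterization (iv) of Theorem \ref{mrThm1} for $\dot S$ inside $A/K$. The preliminary bookkeeping is automatic: $\dot S$ is separable, being the image of the separable set $S$ under the continuous quotient map $q: A \to A/K$, and $\dot S$ generates $A/K$ as a \cstar\ because $q$ is surjective and $S$ generates $A$.

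For the substantive part, suppose $B$ is a unital \cstar, $\theta: A/K \to B$ a unital $*$-homomorphism, and $\phi: B \to B$ a UCP map satisfying $\phi(x)=x$ for all $x\in \theta(\dot S)$. Pull back by the quotient to form $\tilde\theta := \theta\circ q : A \to B$, which is itself a unital $*$-homomorphism of \cstar s (working in the unitization of $A$ if $A$ has no unit, which is harmless since $S$ and $\dot S$ behave compatibly under $q$ on the adjoined unit). Since $q(S)=\dot S$, we have $\tilde\theta(S) = \theta(\dot S)$, and $\phi$ fixes this set pointwise. Applying condition (iv) of Theorem \ref{mrThm1} to the hyperrigid system $S\subseteq A$ together with the data $(\tilde\theta,\phi)$ yields that $\phi$ fixes $\tilde\theta(A)$ pointwise. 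Because $q$ is surjective, $\tilde\theta(A) = \theta(A/K)$, so $\phi$ fixes $\theta(A/K)$ pointwise. This is exactly condition (iv) for $\dot S\subseteq A/K$, and hyperrigidity of $\dot S$ follows from Theorem \ref{mrThm1}.

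There is essentially no obstacle: the entire argument is the formal observation that precomposition with a surjective $*$-homomorphism transports the hypotheses and conclusions of (iv) for $\dot S$ back to those for $S$. The only item requiring minor care is the handling of units when $A$ is non-unital, which is resolved by passing uniformly to unitizations.
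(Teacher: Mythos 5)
Your proof is correct and is essentially the argument the paper has in mind: the paper simply notes that the corollary is an immediate consequence of condition (ii) of Theorem \ref{mrThm1} (pull a representation and a sequence of UCP maps on $A/K$ back through the quotient map), whereas you run the identical pullback through the equivalent condition (iv). Since conditions (ii) and (iv) are interchangeable here and both arguments reduce to the same one-line observation that precomposition with the surjective quotient $q$ transports hypotheses and conclusions, there is nothing further to add; the remark about unitizations is indeed moot, as $S$ is an operator system and so $A$ already contains the unit.
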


\begin{proof}
An immediate consequence of property (ii) of Theorem \ref{mrThm1}.  
\end{proof}

\section{Applications I: Two basic examples}\label{S:sa}

\begin{thm}\label{saThm1}  Let $x\in \mathcal B(H)$ be a self adjoint operator with at least 
$3$ points in its spectrum and let $A$ be the \cstar\ generated 
by $x$ and $\mathbf 1$. Then 
\begin{enumerate}
\item[(i)] $G=\{\mathbf 1, x,x^2\}$ is a hyperrigid generator for $A$, while  
\item[(ii)] $G_0=\{\mathbf 1, x\}$ is not a hyperrigid generator for $A$.  
\end{enumerate}
\end{thm}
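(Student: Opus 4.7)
The strategy is to verify condition (iii) of Theorem \ref{mrThm1} for (i), and to refute it for (ii) by exhibiting an explicit nonunique extension, both directly.

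For (i), fix a nondegenerate representation $\pi: A \to \mathcal{B}(H)$ and a UCP map $\phi: A \to \mathcal{B}(H)$ with $\phi|_G = \pi|_G$; the task is to conclude $\phi = \pi$. Dilate $\phi$ via Stinespring: $\phi(a) = V^* \sigma(a) V$ for some nondegenerate $*$-representation $\sigma: A \to \mathcal{B}(K)$ and isometry $V: H \to K$. The equality $\phi(x^2) = \pi(x^2) = \pi(x)^2 = \phi(x)^2$ rearranges to
$$V^* \sigma(x) (I - V V^*) \sigma(x) V = 0,$$
i.e., $\|(I - V V^*) \sigma(x) V\|^2 = 0$, because $\sigma(x)$ is self adjoint and $I - V V^*$ is a projection. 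Hence $\sigma(x) V = V V^* \sigma(x) V = V \pi(x)$, and iterating gives $\sigma(x)^n V = V \pi(x)^n$ for every $n$; by norm continuity, $\sigma(a) V = V \pi(a)$ for every $a \in A$. Therefore $\phi(a) = V^* \sigma(a) V = V^* V \pi(a) = \pi(a)$, proving the unique extension property and, by Theorem \ref{mrThm1}, hyperrigidity of $G$.

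For (ii), identify $A$ with $C(\mathrm{sp}(x))$ via the Gelfand picture and choose three distinct points $s_1 < s_2 < s_3$ in $\mathrm{sp}(x)$; let $\pi_i: A \to \mathbb{C}$ denote the character corresponding to evaluation at $s_i$. Pick $t \in (0,1)$ with $t s_1 + (1-t) s_3 = s_2$ and define the state $\phi = t \pi_1 + (1-t) \pi_3$. Then $\phi$ and $\pi_2$ agree on $\mathbf{1}$ and on $x$, yet $\phi(x^2) = t s_1^2 + (1-t) s_3^2 > s_2^2 = \pi_2(x^2)$ by strict convexity of $y \mapsto y^2$. Thus the representation $\pi_2$ (on the one-dimensional Hilbert space $\mathbb{C}$) fails the unique extension property with respect to $\{\mathbf{1}, x\}$, so Theorem \ref{mrThm1} shows $G_0$ is not hyperrigid.

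The only delicate point is the derivation $(I - V V^*) \sigma(x) V = 0$ from $V^* \sigma(x)^2 V = (V^* \sigma(x) V)^2$; this is essentially the equality case of the Kadison--Schwarz inequality and depends crucially on the self adjointness of $x$ (without which one would only get $\sigma(x)^* (I - V V^*) \sigma(x)$ sandwiched by $V$). Everything else is routine.
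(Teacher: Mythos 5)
Your proposal is correct and follows essentially the same route as the paper: part (i) is the identical Stinespring/Schwarz-equality argument showing $(\mathbf 1-VV^*)\sigma(x)V=0$ and hence that the dilation space is invariant (you phrase the conclusion as the intertwining $\sigma(a)V=V\pi(a)$, the paper as multiplicativity of $\phi$, which amounts to the same thing), and part (ii) is the same convex-combination-of-evaluations counterexample, with your strict-convexity check of $\phi(x^2)\neq\pi_2(x^2)$ making explicit why the two extensions differ. No gaps.
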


\begin{proof} (i):  By Theorem \ref{mrThm1}, it suffices to show 
that every nondegenerate representation $\pi: C^*(x)\to \mathcal B(K)$ has 
the unique extension property.  To prove that, let $\phi: A\to \mathcal B(K)$ 
be a UCP map that satisfies $\phi(x)=\pi(x)$ and $\phi(x^2)=\pi(x^2)$.  We 
have to show that $\phi$ is multiplicative on $A$.  

For that, Stinespring's theorem implies that there is a Hilbert space 
$L$ containing $K$ and a representation $\sigma: A\to \mathcal B(L)$ such 
that $\phi(a)=P\sigma(a)\restriction_K$, $a\in A$, where $P\in\mathcal B(L)$ is the 
projection onto $K$.  We have 
\begin{align*}
P\sigma(x)(\mathbf 1-P)\sigma(x)P&=P\sigma(x^2)P-P\sigma(x)P\sigma(x)P=\phi(x^2)P-\phi(x)^2P
\\
&=\pi(x^2)P-\pi(x)^2P=0.   
\end{align*}
This implies that $|(\mathbf 1-P(\sigma(x)P)|^2=0$, hence 
$(\mathbf 1-P)\sigma(x)P=0$, i.e.,  $\sigma(x)$ leaves 
$H$ invariant.  Since $A$ is the norm-closed algebra generated by 
$\mathbf 1$ and $x$, it follows that $\sigma(A)$ leaves $H$ invariant, and 
consequently 
$\phi(a)=P\sigma(a)\restriction_K$ is a multiplicative linear map. 

(ii):   Choose points $\lambda_1<\lambda_2<\lambda_3$ in the spectrum $\Sigma$ of $x$.  Then 
$\lambda_2$ is a convex combination of $\lambda_1$ and $\lambda_3$.  For 
$k=1,2,3$, let $\rho_k$ be the state of $A$ defined by 
$$
\rho_k(f(x))=f(\lambda_k), \qquad f\in C(\Sigma).  
$$
Each $\rho_k$ is an irreducible representation of $A$, and by preceding remark, 
the restriction of $\rho_2$ to the function system $S={\rm{span}}\{\mathbf 1, x\}$
is a convex combination of $\rho_1\restriction_S$ and $\rho_2\restriction_S$.  
Since $\rho_1\neq \rho_3$,  $\rho_2\restriction_S$ fails to have the unique extension property, 
and Theorem 
\ref{mrThm1} implies that $S$ is not hyperrigid.  
\end{proof}

Note that the hypothesis on the cardinality of the spectrum of $x$ was not 
used in the proof of item (i) of Theorem \ref{saThm1}.  

\begin{rem}[Other hyperrigid generators]  Let $I=[a,b]$ be a compact real interval and 
let $f: I\to\mathbb R$ be a continuous function and let $A\in\mathcal B(H)$ be a self 
adjoint operator with spectrum in $[a,b]$.  One can ask: Is $\{\mathbf 1, A, f(A)\}$ 
a hyperrigid generator of $C^*(A)$?  Theorem 
\ref{saThm1} answers affirmatively for the particular function $f(t)=t^2$; but the proof 
of Theorem \ref{saThm1} is tailored to this particular function.  In general, 
there is a stringent constraint: {\em If the answer to the above question 
is yes then $f$ must be either strictly 
convex or strictly concave}.  This is a consequence of results 
of Section \ref{S:cr} (see Proposition \ref{crProp3}).

Conversely, if $f$ is strictly convex or strictly concave, then for every self adjoint operator 
$A$ with {\em discrete} spectrum in $I$, $\{\mathbf 1, A, f(A)\}$ is a hyperrigid generator. 
This can be established by making use of Proposition \ref{ncbProp1} at the appropriate 
place in the 
proof of Theorem \ref{crThm1} below.    
We believe that the same is true without the discrete spectrum hypothesis, but that 
depends on the validity of the commutative case of Conjecture \ref{ncbCon1} 
(see Remark  \ref{crRem1}).  
\end{rem}

We now discuss a class of highly noncommutative examples.  
Let $u_1,\dots, u_n$ be an arbitrary set of isometries that act on some Hilbert space.  
The ``defect operator" $D=u_1u_1^*+\cdots+u_nu_n^*$ is positive and  its norm 
satisfies $1\leq \|D\|\leq n$, with many possibilities for $D$ depending on how 
the $u_k$ are chosen.  
In this section we exhibit a hyperrigid generator for the \cstar\ 
generated by $u_1,\dots, u_n$, assuming nothing about the 
structure of the defect operator or relations that may exist 
between the various $u_k$.  

\begin{thm}\label{ciThm1}
Let $u_1,\dots, u_n$ be a set of isometries that generate a \cstar\ $A$ and 
let  
\begin{equation}\label{ciEq1}
G=\{ u_1, \dots, u_n, u_1u_1^*+\cdots+u_nu_n^*\}.  
\end{equation}
Then $G$ is a hyperrigid generator for $A$.  
\end{thm}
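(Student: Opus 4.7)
The plan is to verify the equivalent condition (iii) of Theorem \ref{mrThm1}: every nondegenerate representation $\pi:A\to\mathcal B(K)$ has the unique extension property relative to the operator system spanned by $G\cup G^*$. So I would fix such a $\pi$ and a UCP map $\phi:A\to\mathcal B(K)$ satisfying $\phi(u_k)=\pi(u_k)$ for $k=1,\dots,n$ and $\phi(D)=\pi(D)$, where $D=u_1u_1^*+\cdots+u_nu_n^*$, and aim to show $\phi=\pi$. By Stinespring, dilate $\phi$ to a representation $\sigma:A\to\mathcal B(L)$ with $K\subseteq L$ and $\phi(a)=P\sigma(a)\restriction_K$, where $P$ is the projection of $L$ onto $K$.

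The first key step is essentially the Schwarz-inequality/multiplicative-domain argument already used in the proof of Theorem \ref{saThm1}(i). Since $u_k^*u_k=\mathbf 1$, the operator $\pi(u_k)=\phi(u_k)$ is an isometry, so
\[
P\sigma(u_k)^*(\mathbf 1-P)\sigma(u_k)P = P\sigma(u_k^*u_k)P - \phi(u_k)^*\phi(u_k) = \mathbf 1_K-\mathbf 1_K = 0,
\]
which forces $(\mathbf 1-P)\sigma(u_k)P=0$ for each $k$. Thus $K$ is invariant under every $\sigma(u_k)$, and relative to the decomposition $L=K\oplus K^\perp$ we may write
\[
\sigma(u_k)=\begin{pmatrix} \pi(u_k) & b_k \\ 0 & d_k \end{pmatrix}
\]
for some operators $b_k,d_k$.

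The second key step, and the one that uses the defect operator in an essential way, is to exploit $\phi(D)=\pi(D)$ in order to upgrade invariance to reducibility. A direct block computation gives
\[
\phi(u_ku_k^*)=P\sigma(u_k)\sigma(u_k)^*P\restriction_K=\pi(u_k)\pi(u_k)^*+b_kb_k^*,
\]
and summing over $k$,
\[
\phi(D)=\pi(D)+\sum_{k=1}^n b_kb_k^*.
\]
The hypothesis $\phi(D)=\pi(D)$ therefore forces $\sum_k b_kb_k^*=0$, hence $b_k=0$ for every $k$. Consequently $K$ reduces each $\sigma(u_k)$, and so reduces the whole \cstar\ $\sigma(A)$ generated by $\sigma(u_1),\dots,\sigma(u_n)$. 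The compression $a\mapsto P\sigma(a)\restriction_K=\phi(a)$ is then a $*$-homomorphism that agrees with $\pi$ on the generators, so $\phi=\pi$ on $A$, establishing the unique extension property.

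The conceptual obstacle I expect is exactly the passage from invariance to reducibility: Step one only uses isometry of the generators (which, alone, would not force hyperrigidity of $\{u_1,\dots,u_n\}$, in agreement with the fact that the smaller set is not hyperrigid in general, e.g. for the Volterra-like examples). The role of the extra generator $D$ is to provide the additional identity needed to make the off-diagonal blocks $b_k$ collapse to zero; without it, one obtains an arbitrary isometric dilation rather than a reducing one. Once this reducibility is in hand, the remainder of the argument is formal.
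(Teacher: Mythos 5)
Your proof is correct and follows essentially the same route as the paper: both verify condition (iii) of Theorem \ref{mrThm1} via a Stinespring dilation, use the Schwarz inequality on $u_k^*u_k=\mathbf 1$ to get invariance of $K$ under the $\sigma(u_k)$, and then use $\phi(D)=\pi(D)$ to kill the off-diagonal blocks (your identity $\sum_k b_kb_k^*=0$ is exactly the paper's $\sum_{k} VV^*\sigma(u_k)(\mathbf 1-VV^*)\sigma(u_k)^*VV^*=0$). The only cosmetic difference is that the paper assumes the dilation is minimal and concludes that $V$ is unitary, whereas you observe directly that compression to a reducing subspace is multiplicative; both are valid.
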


\begin{proof} Let $S$ be the operator system spanned by $G\cup G^*$ and 
the identity.  By item (iii) of Theorem \ref{mrThm1}, it suffices to show 
that for every nondegenerate representation $\pi$ of $A$, 
$\pi\restriction_S$ has the unique extension property.  

To prove that, fix a representation $\pi: A\to \mathcal B(H)$ 
and let $v_1,\dots, v_n$ be the isometries $v_k=\pi(u_k)$, $k=1,\dots, n$.  
Let $\phi: A\to \mathcal B(H)$ be a UCP map satisfying 
$\phi(u_k)=v_k$, $1\leq k\leq n$, and 
$\phi(u_1u_1^*+\cdots+u_nu_n^*)=v_1v_1^*+\cdots+v_nv_n^*$.  
We have to show that $\phi=\pi$.  

For that, we use Stinespring's 
theorem to express $\phi$ in the form  
$$
\phi(x)=V^*\sigma(x)V, \qquad x\in A, 
$$
where $\sigma$ is a representation of $A$ on a Hilbert space $K$,  
$V: H\to K$ is an isometry, and which 
is minimal in the sense that $\sigma(A)VH$ spans $K$.  

We claim first that $\sigma(u_k)V=Vv_k$, $1\leq k\leq n$.  
Indeed, for $k=1,\dots, n$ we have 
$$
V^*\sigma(u_k)^*VV^*\sigma(u_k)V=\phi(u_k)^*\phi(u_k)=u_k^*u_k=\mathbf 1_{H}, 
$$
hence $V^*\sigma(u_k)(\mathbf 1-VV^*)\sigma(u_k)V=0$, so 
that $\sigma(u_k)$ leaves $VH$ invariant.  The claim follows because 
$\sigma(u_k)V=VV^*\sigma(u_k)V=V\phi(u_k)=Vv_k$.  

Note next that since $\sum_k v_kv_k^*=\pi(\sum_ku_ku_k^*)=\phi(\sum_k u_ku_k^*)$, we have 
\begin{align*}
\sum_{k=1}^n\sigma(u_k)VV^*\sigma(u_k)^*&=\sum_{k=1}^nVv_kv_k^*V^*=V\phi(\sum_{k=1}^n u_ku_k^*)V
\\
&=VV^*\sum_{k=1}^n\sigma(u_ku_k^*)VV^*
\\
&=\sum_{k=1}^nVV^*\sigma(u_k)\sigma(u_k^*)VV^*
\end{align*}
and since $\sigma(u_k)V=VV^*\sigma(u_k)V$ for all $k$, subtracting the 
left side from the right leads to 
$$
\sum_{k=1}^n VV^*\sigma(u_k)(\mathbf 1_K-VV^*)\sigma(u_k)^*VV^*=0,   
$$
and hence $(\mathbf 1_K-VV^*)\sigma(u_k)^*VV^*=0$ for all $k$.  We conclude that 
$VH$ is invariant under both $\sigma(u_k)$ and $\sigma(u_k)^*$ for all 
$k$; and  since $A$ is generated by the $u_k$ it follows that $\sigma(A)VH\subseteq VH$.  
By minimality, we must have $VH=K$, which implies that $V$ is unitary and 
therefore $\phi(x)=V^{-1}\sigma(x)V$ is a representation.  Since 
$\phi$ agrees with $\pi$ on a generating set, the desired conclusion 
$\phi=\pi$ follows.  
\end{proof}

Since the Cuntz algebras $\mathcal O_n$ are generated by sets of isometries 
$u_1,\dots, u_n$ satisfying the single condition $u_1u_1^*+\cdots+u_nu_n^*=\mathbf 1$, 
we can discard the identity operator from the 
generating set $G$ of (\ref{ciEq1}) to conclude:

\begin{cor}\label{ciCor1}  
The set $G=\{u_1,\dots,u_n\}$ of generators of the Cuntz algebra $\mathcal O_n$ 
is hyperrigid.  
\end{cor}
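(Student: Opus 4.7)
The plan is to derive this as an immediate consequence of Theorem \ref{ciThm1}, with only a brief observation about what changes when we remove the identity from the hyperrigid set.

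First I would invoke Theorem \ref{ciThm1} for the isometries $u_1,\dots,u_n$ that generate $\mathcal O_n$. The defining relation of the Cuntz algebra is precisely
\begin{equation*}
u_1u_1^*+\cdots+u_nu_n^*=\mathbf 1,
\end{equation*}
so the "extra" generator appearing in the set $G$ of (\ref{ciEq1}) coincides with the unit of $\mathcal O_n$. Thus Theorem \ref{ciThm1} tells us that $G=\{u_1,\dots,u_n,\mathbf 1\}$ is a hyperrigid generating set for $\mathcal O_n$.

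Next I would show that dropping the identity from $G$ preserves hyperrigidity. Fix a faithful representation $\mathcal O_n\subseteq\mathcal B(H)$ and a sequence of UCP maps $\phi_k:\mathcal B(H)\to\mathcal B(H)$ with $\|\phi_k(u_j)-u_j\|\to 0$ for $j=1,\dots,n$. Since each $\phi_k$ is unital, $\phi_k(\mathbf 1)=\mathbf 1$, so trivially $\|\phi_k(\mathbf 1)-\mathbf 1\|=0$. Hence $\phi_k$ converges in norm to the identity on every element of the larger set $G=\{u_1,\dots,u_n,\mathbf 1\}$, and by hyperrigidity of $G$ we conclude $\|\phi_k(a)-a\|\to 0$ for every $a\in\mathcal O_n$. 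This is exactly the hyperrigidity of $G_0=\{u_1,\dots,u_n\}$.

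There is no real obstacle here; the only content beyond quoting Theorem \ref{ciThm1} is the elementary observation that UCP maps automatically fix the unit, so that in $\mathcal O_n$ the convergence condition on $\sum_k u_ku_k^*$ contributes no information beyond what is already guaranteed. Put differently, the operator system spanned by $G_0\cup G_0^*\cup\{\mathbf 1\}$ already contains $\sum_k u_ku_k^*=\mathbf 1$, so it agrees with the operator system used in Theorem \ref{ciThm1}, and hyperrigidity is a property of that operator system.
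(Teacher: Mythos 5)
Your proposal is correct and follows essentially the same route as the paper: Arveson likewise notes that in $\mathcal O_n$ the extra generator $u_1u_1^*+\cdots+u_nu_n^*$ equals $\mathbf 1$, so the identity may be discarded from the generating set of Theorem \ref{ciThm1}. Your explicit remark that UCP maps automatically fix the unit is exactly the (implicit) justification the paper relies on.
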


\part{Role of the noncommutative Choquet boundary}

\section{Obstruction to hyperrigidity }\label{S:ncb}

An {\em operator system} is a self adjoint linear subspace of a unital 
\cstar\ $A$ that contains the unit of $A$, and the $C^*$-subalgebra of $A$ 
generated by $S$ is denoted $C^*(S)$.  Given a unital completely 
positive (UCP) map $\phi$ from an operator system $S$ to a unital \cstar\ 
$B$, we say that $\phi$ has the {\em unique extension property} if it has 
a unique UCP extension $\tilde\phi: C^*(S)\to B$, and moreover this extension 
is multiplicative $\tilde\phi(xy)=\tilde\phi(x)\tilde\phi(y)$, $x,y\in C^*(S)$.  
By a {\em boundary representation} for $S$ we mean 
an {\em irreducible} representation $\pi: C^*(S)\to \mathcal B(H)$ 
such that $\pi\restriction_S$ has the unique 
extension property.  There is a more intrinsic characterization of the 
unique extension property that we do not require here (see Proposition 2.4 
of \cite{arvChoq}).  
Much of the discussion to follow rests on a result 
 of \cite{arvChoq}, which we repeat here for reference: 

\begin{thm}\label{ncbThm1} Every separable operator system $S\subseteq C^*(S)$  
has sufficiently many boundary representations 
in the sense that for every $n\geq 1$ and every $n\times n$ matrix 
$(s_{ij})$ with components $s_{ij}\in S$, one has 
$$
\|(s_{ij})\|=\sup_{\pi}\|(\pi(s_{ij}))\|, 
$$
the supremum on the right taken over all boundary representations $\pi$ for $S$. 
\end{thm}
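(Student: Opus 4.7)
The direction $\sup_\pi\|(\pi(s_{ij}))\|\leq \|(s_{ij})\|$ is automatic since every $*$-homomorphism is contractive on matrix amplifications. For the substantive inequality, the plan is to fix $\epsilon>0$ and a matrix $(s_{ij})\in M_n(S)$, and construct a boundary representation $\pi$ with $\|(\pi(s_{ij}))\|>\|(s_{ij})\|-\epsilon$. I would begin by realizing the norm through an irreducible representation: applying the standard dictionary between pure states and irreducible representations to the matrix amplification $M_n(C^*(S))$, I obtain an irreducible representation $\sigma:C^*(S)\to\mathcal B(H)$ with $\|(\sigma(s_{ij}))\|>\|(s_{ij})\|-\epsilon$. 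If $\sigma$ happens to be a boundary representation, there is nothing more to do; the difficulty is that $\sigma\restriction_S$ may fail to have the unique extension property.

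The second step is a maximal-dilation argument. I would partially order UCP maps $\phi:S\to\mathcal B(K)$ by declaring $\phi\prec\phi'$ whenever $\phi'$ acts on a larger Hilbert space $K'\supseteq K$ in which $K$ is (semi-)invariant and $\phi$ is the compression of $\phi'$ to $K$. A Zorn-type exhaustion, made size-legitimate by separability of $S$, produces a maximal dilation $\tilde\phi:S\to\mathcal B(\tilde H)$ of $\sigma\restriction_S$. The crucial dilation-theoretic lemma, a consequence of Stinespring's theorem and the Schwarz inequality, similar in spirit to the argument employed in the proof of Theorem \ref{saThm1}(i) above, asserts that $\tilde\phi$ is maximal in this ordering if and only if it extends uniquely to a $*$-representation $\pi:C^*(S)\to\mathcal B(\tilde H)$; that is, $\pi\restriction_S$ has the unique extension property. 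Because $\sigma$ is recovered as a compression of $\pi$ (so that $(\sigma(s_{ij}))$ is a corner of $(\pi(s_{ij}))$), the matrix norm is nondecreasing: $\|(\pi(s_{ij}))\|\geq \|(\sigma(s_{ij}))\|>\|(s_{ij})\|-\epsilon$.

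The representation $\pi$ need not be irreducible. Exploiting the separability of $C^*(S)$, and replacing $\tilde H$ by the separable cyclic invariant subspace generated by the $n$ vectors that nearly achieve the matrix norm, I may assume $\tilde H$ is separable. Standard direct integral theory then yields a decomposition $\pi=\int^\oplus \pi_t\,d\mu(t)$ over a standard Borel probability space with almost every $\pi_t$ irreducible, and the matrix norm of $(\pi(s_{ij}))$ equals the essential supremum of $t\mapsto\|(\pi_t(s_{ij}))\|$. Consequently a positive-measure set of fibers satisfies $\|(\pi_t(s_{ij}))\|>\|(s_{ij})\|-2\epsilon$.

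The central obstacle is to show that almost every fiber $\pi_t$ inherits the unique extension property from $\pi$, and hence is a boundary representation. I expect this to require a measurable selection argument: if the property failed on a set $E$ of positive measure, one could Borel-measurably select, for each $t\in E$, a second UCP extension of $\pi_t\restriction_S$ distinct from $\pi_t$ itself, and then integrate these selections (extending by $\pi_t$ off $E$) into a genuine UCP extension of $\pi\restriction_S$ that disagrees with $\pi$ on a positive-measure set, contradicting the unique extension property established in the previous paragraph. Granting this descent, one obtains a boundary representation $\pi_t$ satisfying $\|(\pi_t(s_{ij}))\|>\|(s_{ij})\|-2\epsilon$, and letting $\epsilon\to 0$ finishes the proof. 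The two pressure points of the argument are (a) the maximality-equals-uniqueness lemma of the second paragraph, which underlies the production of representations with the unique extension property, and (b) the measurable descent of that property through the direct integral; both rely essentially on the separability hypothesis.
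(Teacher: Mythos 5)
Be aware that the paper you are reading does not prove Theorem \ref{ncbThm1} at all: it is imported from \cite{arvChoq}, where it is the main result, and is restated here only for reference. That said, your outline reproduces the strategy of \cite{arvChoq} faithfully: realize the matrix norm on an irreducible representation, dilate its restriction to $S$ to a maximal UCP map, use the equivalence of maximality with the unique extension property (the Dritschel--McCullough mechanism \cite{MR2132691}, which is indeed a Stinespring-plus-Schwarz argument of the same flavor as the proof of Theorem \ref{saThm1}(i)), decompose the resulting representation as a direct integral of irreducibles, and show that almost every fibre is a boundary representation. So the route is the right one, and the easy reductions (the contractivity direction, the norm being nondecreasing under dilation, cutting down to a separable reducing subspace --- where you should note explicitly that the unique extension property passes to direct summands because an extension of a summand can be padded by the complementary summand) are all sound. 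What you have, however, is an outline rather than a proof: the final descent of the unique extension property to almost every fibre is precisely where the entire difficulty of \cite{arvChoq} lies. Organizing the UCP extensions of the maps $\pi_t\restriction_S$ into a Borel field over the base space, showing that the set of $t$ admitting a second extension is measurable, selecting second extensions measurably, and checking that their direct integral is again a UCP extension of $\pi\restriction_S$ disagreeing with $\pi$ occupies most of that paper and cannot simply be asserted; until that step is carried out, the argument is a correct plan but not a proof.
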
 

Let $X$ be a compact metrizable space and let $S\subseteq C(X)$ be a function system, namely 
a linear subspace of $C(X)$ that is closed under complex conjugation and contains the 
constant functions.  There is no essential loss if one assumes that $S$ separates points 
of $X$.  Let $p$ be a point of $X$; by a {\em representing measure} for $p$ one means 
a (Borel) probability measure $\mu$ on $X$ satisfying 
$$
\int_X f(x)\,d\mu(x) = f(p), \qquad f\in S.  
$$
The set $K_p$ of all representing measures for $p$ is a weak$^*$-compact convex subset 
of the dual of $C(X)$, and it contains the point mass $\delta_p$ concentrated at $p$.  
If $K_p=\{\delta_p\}$, then $p$ is said to belong to the Choquet boundary of $X$ (relative 
to $S$), sometimes written $\partial_S(X)$.  It is not obvious that the Choquet boundary 
is nonempty; but it is always nonempty when $S$ separates points, and in fact its 
closure is the Silov boundary - the smallest closed set $K\subseteq X$ with the property that every 
function in $S$ achieves its maximum value on $K$ (see Proposition 6.4 of \cite{phelps2}).  
The following comments show that 
Theorem \ref{ncbThm1} generalizes this fact to noncommutative operator systems.

For every operator system $S\subseteq A=C^*(S)$, there is a largest (closed two 
sided) ideal $K\subseteq A$ such that the quotient map 
$a\in A\mapsto \dot a\in A/K$ is completely isometric on $S$.  The quotient 
\cstar\ $A/K$ is called the {\em $C^*$-envelope} of $S$.  The $C^*$-envelope of $S$ 
depends only on 
the internal structure of $S$ and not 
on the embedding of $S$ in its generated \cstar.  This ideal was 
introduced and shown to exist for a variety of examples in \cite{arvSubalgI}, 
where it was called the {\em Silov boundary ideal} since it is the noncommutative 
counterpart of the Silov boundary of a function system.  
The existence of the Silov boundary ideal in general was left open, and the 
issue was later settled affirmatively by Hamana 
\cite{hamInj1},  \cite{hamInj2} as a consequence of 
his work on injective envelopes.  
More recently, Dritschel and McCullough 
 \cite{MR2132691} gave  
a second proof of the existence of this ideal in general that is independent 
of the theory of injective envelopes.  During the past 
decade or so, the terminology for the ideal $K$ has been contracted to {\em Silov ideal}
for $S$.  
On the other hand, in the noncommutative context it seems more appropriate to refer to $K$ 
simply as the {\em boundary ideal} for $S$, as we shall do throughout this paper.  

It was shown in Theorem 2.2.3 of 
\cite{arvSubalgI} that for every operator system that has 
sufficiently many boundary representations (in the sense of Theorem \ref{ncbThm1}), 
the boundary ideal is the intersection of the kernels of 
all boundary representations.  
Note that the existence of sufficiently 
many boundary representations in general was left open in \cite{arvSubalgI} and 
\cite{arvSubalgII}, and was not addressed in Hamana's work on injectivity. 
Since Theorem \ref{ncbThm1} establishes 
that property for separable operator systems, it 
provides a third proof of the existence of the boundary ideal in such cases.    
Of course, this is the noncommutative 
counterpart of the 
fact that the closure of the Choquet boundary of a 
function system is the Silov boundary.

We deduce the following necessary conditions for hyperrigidity:  

\begin{cor}\label{ncbCor1}  Let $S$ by a separable operator system generating a \cstar\ $A$.  If $S$ is hyperrigid, 
then every irreducible representation of $A$ is a boundary representation for $S$.  
In particular, the boundary ideal of a hyperrigid operator system must be $\{0\}$.  
\end{cor}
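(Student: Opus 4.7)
The plan is to derive this immediately from the equivalence $(i)\Leftrightarrow(iii)$ in Theorem \ref{mrThm1} and the definition of a boundary representation. The key observation is that because $S$ is separable, the generated C*-algebra $A=C^*(S)$ is also separable, and therefore every irreducible representation of $A$ acts on a separable Hilbert space (every pure state has a separable GNS space). Thus the hypothesis of Theorem \ref{mrThm1}(iii) applies to any irreducible representation.

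First I would let $\pi:A\to \mathcal B(H)$ be an irreducible representation. Since $\pi$ is nondegenerate (automatic from irreducibility once we exclude the trivial case) and $H$ is separable, Theorem \ref{mrThm1}(iii) applied to the hyperrigidity assumption on $S$ gives that $\pi\restriction_S$ has the unique extension property. Together with irreducibility of $\pi$, this is exactly the definition of a boundary representation for $S$, proving the first assertion.

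For the second assertion, I would invoke Theorem 2.2.3 of \cite{arvSubalgI} (quoted in the surrounding discussion), which asserts that for a separable operator system the boundary ideal coincides with the intersection of the kernels of all boundary representations; the applicability of that theorem is guaranteed by Theorem \ref{ncbThm1}, which supplies sufficiently many boundary representations. Combining this with the first assertion, the boundary ideal is contained in $\bigcap_{\pi}\ker\pi$, the intersection taken over all irreducible representations of $A$. Since the irreducible representations of any C*-algebra separate its points, this intersection is $\{0\}$, and the boundary ideal must vanish.

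The argument involves no real obstacle; the only point that needs slight care is verifying the separability of $H$ for irreducible representations of $A$, so that Theorem \ref{mrThm1}(iii) may legitimately be invoked. Everything else is a direct unwinding of definitions and a quotation of the previously stated characterization of the boundary ideal via boundary representations.
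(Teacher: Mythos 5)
Your proof is correct and follows essentially the same route as the paper: the first assertion is read off from the equivalence in Theorem \ref{mrThm1} (the paper cites condition (ii), you use the equivalent condition (iii), which is the same trivial unwinding), and the second assertion comes from the identification of the boundary ideal with the intersection of kernels of boundary representations via Theorem \ref{ncbThm1}. Your extra remark that irreducible representations of the separable algebra $A$ act on separable Hilbert spaces is a legitimate and correctly handled point of care that the paper leaves implicit.
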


\begin{proof}
The first assertion is an immediate consequence of condition (ii) of Theorem \ref{mrThm1}.  
The second follows from it, together with Theorem \ref{ncbThm1}, which implies
that the boundary ideal is the intersection of the kernels of all boundary representations 
for $S$.  
\end{proof}

We now conjecture that the obstructions described in Corollary \ref{ncbCor1} 
are the only obstructions to hyperrigidity.  Indeed, we will prove that conjecture 
for classes of examples in Section \ref{S:cs}:    

\begin{conj}\label{ncbCon1} If every 
irreducible representation of $A$ is a boundary representation for a separable operator 
system $S\subseteq A$, then $S$ 
is hyperrigid.  
\end{conj}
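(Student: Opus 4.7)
The plan is to reduce hyperrigidity to a unique extension property (UEP) for separable representations of $A$, and then to propagate the UEP from irreducible representations (where it is the hypothesis) to arbitrary separable representations by a decomposition argument.

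By the equivalence (i)$\Leftrightarrow$(iii) of Theorem \ref{mrThm1}, $S$ is hyperrigid if and only if every nondegenerate representation $\pi: A\to \mathcal B(H)$ on a separable Hilbert space has $\pi\restriction_S$ with the unique extension property. The hypothesis supplies this for irreducible $\pi$; the substance of the conjecture is the passage to reducible representations.

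The key intermediate result I would establish is a direct-sum stability lemma: if $\pi=\bigoplus_n\pi_n$ is an at-most-countable orthogonal direct sum of nondegenerate representations on separable Hilbert spaces $H_n$ and each $\pi_n\restriction_S$ has the UEP, then so does $\pi\restriction_S$. To see this, let $\phi: A\to\mathcal B(H)$ be any UCP map with $\phi\restriction_S=\pi\restriction_S$, and take a Stinespring dilation $\phi(a)=V^*\sigma(a)V$ with $\sigma: A\to \mathcal B(K)$ a representation and $V: H\to K$ an isometry. Setting $V_n:=V\restriction_{H_n}$, the compression $\phi_n(a):=V_n^*\sigma(a)V_n$ is a UCP extension of $\pi_n\restriction_S$, so the UEP of $\pi_n$ yields $\phi_n=\pi_n$. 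Since $\phi_n$ is then multiplicative, expanding $\phi_n(a^*a)=\phi_n(a)^*\phi_n(a)$ gives $V_n^*\sigma(a)^*(I-V_nV_n^*)\sigma(a)V_n=0$, whence $(I-V_nV_n^*)\sigma(a)V_n=0$ and $V_nH_n$ is $\sigma(A)$-invariant. Since the subspaces $V_nH_n$ are mutually orthogonal in $K$ and span $VH$, the subspace $VH$ itself is $\sigma(A)$-invariant, which gives $\sigma(a)V=V\phi(a)$; combined with the fact that $\sigma(a)$ preserves each $V_nH_n$ separately, one concludes that $\phi(a)$ preserves each $H_n$, whence $\phi(a)=\bigoplus_n\phi_n(a)=\bigoplus_n\pi_n(a)=\pi(a)$.

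With this lemma in hand, it remains to decompose every nondegenerate separable representation $\pi$ of $A$ into irreducibles. When $A$ has countable spectrum every such $\pi$ is unitarily equivalent to a direct sum of irreducibles, and the lemma closes the argument; this is, I expect, what is meant by Theorem \ref{csThm1}. In the general case one has only the central direct integral decomposition $\pi=\int^{\oplus}\pi_t\,d\mu(t)$, and adapting the above argument fiberwise is the main obstacle: the Stinespring dilation $\sigma$ of a given UCP extension $\phi$ need not respect the measurable field structure of $\pi$, so there is no a priori compatible disintegration $\sigma=\int^{\oplus}\sigma_t\,d\mu(t)$ into which to feed the fiberwise UEP. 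Any workaround would have to either build such a compatible dilation directly, or approximate the direct integral by direct sums and control the limit of UCP extensions, which is delicate because point-norm convergence of UCP maps is not preserved under arbitrary approximations of $\pi$. This measurability obstruction is, I suspect, precisely why the conjecture resists proof beyond the countable-spectrum case.
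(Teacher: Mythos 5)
The statement you were asked to prove is labeled a \emph{conjecture} in the paper, and the paper does not prove it in general; it proves only the special case in which $C^*(S)$ has countable spectrum (Theorem \ref{csThm1}) together with a ``local'' commutative version (Theorem \ref{ueThm1}). Your proposal is therefore correctly calibrated: you do not claim a full proof, and the partial results you do establish coincide with what the paper actually proves. Your direct-sum stability lemma is Proposition \ref{ncbProp1}; your argument via a Stinespring dilation of $\phi$ (showing each $VH_n$ is $\sigma(A)$-invariant, hence $VH$ is, hence $\sigma(a)V=V\phi(a)$) is sound and is a mild variant of the paper's, which instead applies the Schwarz inequality directly to $\phi$ to show that each projection $P_n$ commutes with $\phi(A)$ --- the two routes are essentially equivalent, though the paper's version works verbatim for an arbitrary (not just countable) index set. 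Your reduction of the countable-spectrum case to this lemma is exactly the proof of Theorem \ref{csThm1}; the only detail you elide is that countable spectrum gives a countable direct sum of \emph{factor} representations, each a multiple of an irreducible, so one applies the lemma twice (once to the multiples, once to the sum over $\hat A$), which your lemma handles without change. Finally, your diagnosis of the obstruction in the general case --- that a Stinespring dilation of a UCP extension of $\pi\restriction_S$ need not disintegrate compatibly with the central direct integral decomposition of $\pi$ --- is consistent with the paper's own assessment that the conjecture remains open beyond the countable-spectrum (and, locally, the commutative) setting. In short: no gap relative to what is actually provable here, and your approach to the provable part is essentially the paper's.
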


It is known that a direct sum of UCP maps with the unique extension property 
has the unique extension property (see \cite{MR2132691}).  For completeness, we conclude 
the section by proving that result in the form we require.  

\begin{prop}\label{ncbProp1}  Let $S\subseteq A=C^*(S)$ be an operator system, 
and for each $i$ in an index set $I$, 
let $\pi_i: A\to \mathcal B(H_i)$ be a 
representation such that $\pi_i\restriction_S$ has 
the unique extension property.  Then the direct sum of UCP maps 
$$
\oplus_{i\in I} \pi_i\restriction_S: S\to \mathcal B(\oplus_{i\in I}H_i)
$$
has the unique extension property.  
\end{prop}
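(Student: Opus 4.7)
The plan is to take a UCP map $\phi\colon A\to\mathcal B(\oplus_i H_i)$ that restricts to $\pi:=\oplus_i\pi_i$ on $S$ and show, by a multiplicative-domain argument, that $\phi$ must itself be block-diagonal with respect to the decomposition $H=\oplus_i H_i$, with $i$-th block equal to $\pi_i$. Since $\pi$ is a $*$-homomorphism, this simultaneously gives uniqueness of the extension and its multiplicativity.

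Let $P_i\in\mathcal B(H)$ denote the projection onto $H_i$. First I would compress $\phi$ and define $\phi_i\colon A\to\mathcal B(H_i)$ by $\phi_i(a)=P_i\phi(a)P_i\restriction_{H_i}$. Because $\pi(s)=\oplus_j\pi_j(s)$ is block-diagonal for $s\in S$, the compression satisfies $\phi_i\restriction_S=\pi_i\restriction_S$. The hypothesis that $\pi_i\restriction_S$ has the unique extension property then forces $\phi_i=\pi_i$, and in particular $\phi_i$ is multiplicative on all of $A$.

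Next, the key step: I would use Kadison--Schwarz for the UCP map $\phi$ to write $T:=\phi(a^*a)-\phi(a)^*\phi(a)\geq 0$, compress to $H_i$, and invoke the multiplicativity of $\phi_i$. This gives the identity $P_i\phi(a^*a)P_i=P_i\phi(a)^*P_i\phi(a)P_i$, so that
\[
P_iTP_i=-P_i\phi(a)^*(\mathbf 1-P_i)\phi(a)P_i=-\bigl[(\mathbf 1-P_i)\phi(a)P_i\bigr]^*\bigl[(\mathbf 1-P_i)\phi(a)P_i\bigr].
\]
Positivity of both $P_iTP_i$ and the right-hand factor forces both sides to vanish, so $(\mathbf 1-P_i)\phi(a)P_i=0$; that is, $\phi(a)$ leaves $H_i$ invariant. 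Applying the same reasoning to $a^*$ and taking adjoints shows $P_i\phi(a)(\mathbf 1-P_i)=0$ as well, so $\phi(a)$ commutes with each $P_i$.

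With $\phi(a)$ now known to be block-diagonal with $i$-th block $\phi_i(a)=\pi_i(a)$, we conclude $\phi(a)=\oplus_i\pi_i(a)=\pi(a)$ for all $a\in A$, establishing both uniqueness and multiplicativity of the extension. The only place requiring genuine care is the Schwarz-inequality bookkeeping in the key step; everything else is essentially functorial. Note that the argument makes no use of separability or countability of the index set $I$, since the Hilbert-space direct sum $\oplus_i H_i$ and the representation $\oplus_i\pi_i$ are well-defined for arbitrary $I$.
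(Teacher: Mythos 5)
Your proposal is correct and follows essentially the same route as the paper: compress $\phi$ to each $H_i$, invoke the unique extension property to get $\phi_i=\pi_i$, and then use the Schwarz inequality to show $(\mathbf 1-P_i)\phi(a)P_i=0$, so that $\phi$ is block-diagonal and equals $\oplus_i\pi_i$. The only cosmetic difference is that you package the Schwarz step via the positive operator $T=\phi(a^*a)-\phi(a)^*\phi(a)$, whereas the paper writes the same inequality chain directly.
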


\begin{proof}Let $\phi: A\to \mathcal B(\oplus_{i\in I}H_i)$ be an extension 
of $\pi$ to a UCP map from $A$ to $\mathcal B(\oplus_{i\in I}H_i)$, and  
for each $i\in I$, let $\phi_i: A\to \mathcal B(H_i)$ 
be the UCP map 
$$
\phi_i(a)=P_i\phi(a)\restriction_{H_i}, \qquad a\in A.  
$$
where $P_i$ is the projection on $H_i$.  
Since $\phi_i$ restricts to $\pi_i$ on $S$, the unique extension property 
of $\pi_i\restriction_S$ implies 
that $\phi_i(a)=\pi_i(a)$ for all $a\in A$, or equivalently, 
$P_i\phi(a)P_i=\pi(a)P_i$.  By the Schwarz inequality applied to $\phi$, 
\begin{align*}
P_i\phi(a)^*(\mathbf 1-P_i)\phi(a)P_i&=P_i\phi(a)^*\phi(a)P_i-P_i\phi(a)^*P_i\phi(a)P_i
\\
&\leq P_i\phi(a^*a)P_i-P_i\phi(a)^*P_i\phi(a)P_i
\\
&=\pi(a^*a)P_i-\pi(a)^*\pi(a)P_i=0.  
\end{align*}
Hence $|(\mathbf 1-P_i)\phi(a)P_i|^2=0$, and 
it follows that $P_i$ commutes with the self adjoint family of operators $\phi(A)$.  
So for every $a\in A$ we have 
$$
\phi(a)=\sum_{i\in I}\phi(a)P_i=\sum_{i\in I}P_i\phi(a)P_i=\sum_{i\in I}\pi(a)P_i=\pi(a)
$$
as asserted.  
\end{proof}

\section{Countable spectrum }\label{S:cs}

Let $A$ be a separable \cstar.  By the {\em spectrum} of $A$ we mean the set 
$\hat A$ of unitary equivalence classes of irreducible representations of $A$.  In 
general, $\hat A$ carries a natural Borel structure that separates points of $\hat A$, 
and it is well-known that 
$A$ is type I iff the Borel structure of $\hat A$ is countably separated.  
In this section we prove Conjecture \ref{ncbCon1} for operator systems $S$ whose generated 
\cstar\ has countable spectrum.  This class of \cstar s 
includes those generated by sets of compact operators 
(and the identity) 
as well as many others.  It is closed under most of the natural 
ways of forming new \cstar s from given ones (countable direct sums, quotients, ideals, 
extensions, crossed products with compact Lie groups), 
but of course it fails to contain most commutative \cstar s.

\begin{thm}\label{csThm1}
Let $S$ be a separable operator system whose generated \cstar \ $A$ has 
countable spectrum, such that every irreducible representation 
of $A$ is a boundary representation for $S$.  Then $S$ is hyperrigid.  
\end{thm}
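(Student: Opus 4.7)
My plan is to apply Theorem \ref{mrThm1}(iii): it suffices to show that for every nondegenerate representation $\pi: A\to\mathcal B(H)$ on a separable Hilbert space, the restriction $\pi\restriction_S$ has the unique extension property.

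First I would argue that $A$ is type I. Since $\hat A$ is countable, its natural Borel structure is countably separated, and by Glimm's theorem a separable \cstar\ with countably separated spectrum is type I. For such an algebra, every representation on a separable Hilbert space has a central direct integral decomposition over $\hat A$ whose fibers are multiples of the corresponding irreducible representations. Because $\hat A$ is countable, every $\sigma$-finite measure on it is purely atomic, so the direct integral collapses to a direct sum
$$
\pi \;\cong\; \bigoplus_{[\sigma]\in\hat A} m_\sigma\cdot\sigma,
$$
with multiplicities $m_\sigma\in\{0,1,2,\dots,\aleph_0\}$ and only countably many of them nonzero.

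By hypothesis, each irreducible $\sigma$ is a boundary representation for $S$, so $\sigma\restriction_S$ has the unique extension property. Applying Proposition \ref{ncbProp1} first to each block $m_\sigma\cdot\sigma$ (viewed as the direct sum of $m_\sigma$ copies of $\sigma$), and then a second time to the resulting direct sum over $[\sigma]\in\hat A$, shows that $\pi\restriction_S$ has the unique extension property. Combined with Theorem \ref{mrThm1}(iii), this yields hyperrigidity of $S$.

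The main technical obstacle is justifying the direct sum decomposition: one must combine the central decomposition machinery for type I \cstar s with the atomicity of any $\sigma$-finite measure on a countable set. Neither ingredient is deep in isolation, but one has to confirm that each fiber appearing in the central decomposition is genuinely a multiple of a single irreducible representation, rather than some more general type I factor representation. Countability of $\hat A$ makes this automatic, because each atom of the central measure singles out a unique equivalence class in $\hat A$ and hence (up to multiplicity) a unique irreducible representation.
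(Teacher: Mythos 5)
Your proposal is correct and follows essentially the same route as the paper: reduce to the unique extension property via Theorem \ref{mrThm1}(iii), use countability of $\hat A$ to get type I and to collapse the central decomposition into a countable direct sum of multiples of irreducibles, then invoke Proposition \ref{ncbProp1} on the resulting (double) direct sum. The only cosmetic difference is that you spell out the Glimm-theorem justification that the paper leaves implicit.
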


\begin{proof}
By item (iii) of Theorem \ref{mrThm1}, it suffices to show that for every representation 
$\pi:A\to \mathcal B(H)$ of $A$ on a separable Hilbert space, the UCP map 
$\pi\restriction_S$ has the unique extension property.   Since the spectrum 
of $A$ is countable, $A$ is 
a type I \cstar, hence $\pi$ decomposes uniquely into a direct integral of 
mutually disjoint 
type I factor representations.  Using countability of $\hat A$ again, 
the direct integral must in fact be a countable direct sum.  Hence 
$\pi$ can be decomposed into a direct sum of 
subrepresentations $\pi_n: A\to \mathcal B(H_n)$
\begin{equation}\label{csEq1}
H=H_1\oplus H_2\oplus\cdots, \qquad \pi=\pi_1\oplus \pi_2\oplus\cdots 
\end{equation}
with the property that each $\pi_n$ is unitarily equivalent to 
a finite or countable direct sum of 
copies of a single irreducible 
representation $\sigma_n: A\to \mathcal B(K_n)$.   

By hypothesis, each UCP map $\sigma_n\restriction_S$ has the unique extension property.  
Hence the above decomposition expresses $\pi\restriction_S$ as a (double)
direct sum of UCP maps with the unique extension property.   
By Proposition \ref{ncbProp1}, it follows that $\pi\restriction_S$ has the unique extension property.  
\end{proof}

\section{Generators of nonunital \cstar s }\label{S:gn}

In this section we discuss sets $G$ of operators that generate a nonunital \cstar\ $A$ -- for example, 
sets of compact operators on an infinite dimensional Hilbert space.  
One can adjoin the identity operator to obtain a unital 
\cstar \ $\tilde A=A+\mathbb C\cdot\mathbf 1$, at the cost of introducing 
an additional one dimensional irreducible representation $\pi_\infty: \tilde A\to \mathbb C$ 
that represents ``evaluation at $\infty$"
\begin{equation}\label{gnEq1}
\pi_\infty(a+\lambda\cdot\mathbf 1)=\lambda, \qquad a\in A, \quad \lambda\in\mathbb C.  
\end{equation}
It is a fact that 
$\pi_\infty$ may or may not be a boundary representation for the operator system  $\tilde S$ 
spanned by $G\cup G^*\cup\{\mathbf 1\}$; and when it is not a boundary representation, 
$G$ cannot be hyperrigid.  
The purpose of this section is to identify this obstruction to hyperrigidity 
in concrete operator-theoretic terms.  
We will show that 
$\pi_\infty$ is a boundary representation for $\tilde S$ iff the original (nonunital) space $S$ 
spanned by $G\cup G^*$ 
``almost contains" strictly positive operators.  

A self adjoint operator $x\in A$ is said to be {\em almost dominated} by $S$ if there is 
a sequence of self adjoint operators $s_n\in S$ such that
$$
s_n+\frac{1}{n}\cdot\mathbf 1\geq x, \qquad n=1,2,\dots.     
$$
A more familiar notion is strict positivity:  A positive operator $p\in A$ 
is called {\em strictly positive} if for every 
positive linear functional $\phi\in A^\prime$, 
$$
\phi(p)=0\implies \phi=0.  
$$  
It is well-known that separable \cstar s contain many strictly positive operators; for 
example, if $e_1\leq e_2\leq \cdots$ is a countable approximate unit for $A$, then 
for every sequence of positive numbers $c_1, c_2, \dots$ with finite sum, 
$$
p=c_1\cdot e_1+c_2\cdot e_2+\cdots 
$$
is a strictly positive operator in $A$.  

\begin{thm}\label{gnThm1} Let $S$ be a self adjoint operator space that generates 
a nonunital \cstar\ $A$, let $\tilde A=A+\mathbb C\cdot \mathbf 1$, 
$\tilde S=S+\mathbb C\cdot\mathbf 1$, and let $\pi_\infty: \tilde A\to \mathbb C$ 
be the representation at $\infty$.  The following are equivalent.   
\begin{enumerate}
\item[(i)] $\pi_\infty$ is a boundary representation for $\tilde S$.  
\item[(ii)] $A$ contains a strictly positive operator  that is almost 
dominated by $S$.  
\item[(iii)] Every self adjoint operator $x\in A$ is almost dominated 
by $S$.  
\end{enumerate}
\end{thm}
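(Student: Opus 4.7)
My plan is to begin by rephrasing the unique extension property in this one-dimensional setting. Since $\pi_\infty$ takes values in $\mathbb{C}$, a UCP extension of $\pi_\infty\restriction_{\tilde S}$ to $\tilde A$ is just a state $\phi$ on $\tilde A$ with $\phi(s)=0$ for every $s\in S$. Thus (i) is equivalent to the statement that the only such state is $\pi_\infty$ itself, i.e.\ $\phi$ must vanish on all of $A$. The three implications (iii)$\implies$(ii)$\implies$(i)$\implies$(iii) can then be organized around this reformulation, with (i)$\implies$(iii) being the substantive step.

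For (iii)$\implies$(ii), I would use that $S$ is separable and hence so is $A$, so $A$ contains a strictly positive element $p$; being self adjoint, $p$ is almost dominated by $S$ by hypothesis. For (ii)$\implies$(i), let $p\in A$ be strictly positive and almost dominated, with self adjoint $s_n\in S$ satisfying $s_n+\tfrac{1}{n}\mathbf{1}\geq p$. If $\phi$ is a state on $\tilde A$ extending $\pi_\infty\restriction_{\tilde S}$, then applying $\phi$ to this inequality gives $\tfrac{1}{n}=\phi(s_n+\tfrac{1}{n}\mathbf{1})\geq \phi(p)\geq 0$, so $\phi(p)=0$. Strict positivity of $p$ forces $\phi\restriction_A=0$, and therefore $\phi=\pi_\infty$.

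The heart of the proof is (i)$\implies$(iii), for which I would argue by contradiction: suppose some self adjoint $x\in A$ is not almost dominated by $S$, so there is $\epsilon>0$ with $s+\epsilon\mathbf{1}-x\not\geq 0$ for every self adjoint $s\in S$. Consider the affine set $D=\{s-x:s\in S_{sa}\}$ and the closed convex set $E=\{y\in \tilde A_{sa}:y\geq -\epsilon\mathbf{1}\}$ in the real Banach space $\tilde A_{sa}$. Our hypothesis is that $D\cap E=\emptyset$, and $E$ has nonempty norm interior (it contains $0$ in its interior). By the Hahn--Banach separation theorem there is a nonzero continuous real linear functional $\psi$ on $\tilde A_{sa}$ and $c\in \mathbb{R}$ with $\psi(s-x)\leq c$ for all $s\in S_{sa}$ and $\psi(y)\geq c$ for all $y\in E$.

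Standard manipulations now convert $\psi$ into the required bad state. Since $S_{sa}$ is a real subspace, the first inequality forces $\psi\restriction_{S_{sa}}=0$ and $\psi(x)\geq -c$. Since $E$ is closed under the scaling $y\mapsto ty$ for $t\geq 0$ restricted to $y\geq 0$, the second inequality forces $\psi\geq 0$ on the positive cone of $\tilde A$. Plugging $y=0$ and $y=-\tfrac{\epsilon}{2}\mathbf{1}$ into the second inequality gives $c\leq -\tfrac{\epsilon}{2}\psi(\mathbf{1})$. Since a positive functional on a unital \cstar\ has norm equal to its value on the unit, $\psi\neq 0$ forces $\psi(\mathbf{1})>0$; normalizing $\phi=\psi/\psi(\mathbf{1})$ yields a state on $\tilde A$ which vanishes on $S$ (extending $\psi\restriction_{S_{sa}}=0$ by complex linearity) yet satisfies $\phi(x)\geq \epsilon/2>0$, contradicting uniqueness of the extension $\pi_\infty$ since $\pi_\infty(x)=0$. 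The main obstacle is arranging the Hahn--Banach separation to deliver a functional that is simultaneously positive, unital after normalization, and nonzero on $x$; the use of the buffer $\epsilon$ (rather than just the positive cone $P$) is what guarantees $\psi(\mathbf{1})>0$ and produces the strict inequality $\phi(x)>0$.
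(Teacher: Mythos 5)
Your proposal is correct, and its overall architecture matches the paper's: the same cycle (iii)$\implies$(ii)$\implies$(i)$\implies$(iii), the same reformulation of (i) as ``the only state of $\tilde A$ vanishing on $S$ is $\pi_\infty$,'' and an (ii)$\implies$(i) argument that is word-for-word the paper's (apply the state to $s_n+\tfrac1n\mathbf 1\geq p$, get $\rho(p)=0$, invoke strict positivity). Where you diverge is the substantive step (i)$\implies$(iii). The paper routes this through a separately stated minimax principle (Proposition \ref{gnProp1}): for a state $\phi$ of an operator system and a self adjoint $x$, $\inf\{\phi(s): s=s^*\in S,\ s\geq x\}=\max\{\rho(x):\rho\in E_\phi\}$, proved by defining a functional on $S+\mathbb C\cdot x$ and extending by Krein's theorem; applied to $\phi=\pi_\infty\restriction_{\tilde S}$ with $E_\phi=\{\pi_\infty\}$ this gives the almost-domination directly. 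You instead argue contrapositively by a geometric Hahn--Banach separation of the affine set $\{s-x:s\in S_{sa}\}$ from the convex body $\{y\geq-\epsilon\mathbf 1\}$, using the $\epsilon$-buffer to guarantee both nonempty interior and, after normalization, a state that is strictly positive at $x$. The two arguments are dual formulations of the same fact; the paper's version buys a clean, reusable quantitative lemma (it is cited again in the proof of (i)$\implies$(iii) and is of independent interest), while yours is self-contained and makes the geometric source of the ``bad'' state visible. Your checks along the way --- that $\psi$ must vanish on the subspace $S_{sa}$ because it is bounded above there, that positivity of $\psi$ follows from scaling the positive cone inside $E$, and that $\psi(\mathbf 1)>0$ because a nonzero positive functional has norm $\psi(\mathbf 1)$ --- are all sound, so the argument goes through.
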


Our proof of Theorem \ref{gnThm1} requires an operator-algebraic variation of a classic 
minimax principle -  a consequence of Krein's extension theorem 
for positive linear functionals.  While the result is known in one form or another 
to specialists, we lack a specific reference and include a proof 
for completeness.  
Let $S$ be an operator system and let $B$ be the (unital) \cstar\ generated by $S$.  A 
{\em state} of $S$ is a positive linear functional $\phi$ on $S$ such that 
$\phi(\mathbf 1)=1$.  Krein's extension theorem implies that 
every state of $S$ can be extended to a state of $B$, and 
we write $E_\phi$ for the weak$^*$-compact convex set of all extensions of $\phi$ 
to a state of $B$.  

\begin{prop}\label{gnProp1} Let $S$ be an operator system that generates a \cstar\ $B$.  
For every state $\phi$ of $S$ and every self-adjoint operator $x\in B$, 
\begin{align*}
\sup\{\phi(s): s=s^*\in S,\ s\leq x\}&=\min\{\rho(x): \rho\in E_\phi\}, \\
\inf\{\phi(s): s=s^*\in S,\ s\geq x\}&=\max\{\rho(x): \rho\in E_\phi\} . 
\end{align*}
\end{prop}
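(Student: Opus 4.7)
The two identities are equivalent under $x \mapsto -x$, so it suffices to prove the first. The inequality $\sup \leq \min$ is immediate: for any $\rho \in E_\phi$ and self-adjoint $s \in S$ with $s \leq x$, one has $\phi(s) = \rho(s) \leq \rho(x)$. Write $\alpha = \sup\{\phi(s) : s = s^* \in S,\ s \leq x\}$ and $\alpha' = \inf\{\phi(s) : s = s^* \in S,\ s \geq x\}$. If $s_1 \leq x \leq s_2$ with $s_1, s_2 \in S$ self-adjoint, then $s_2 - s_1 \geq 0$ lies in $S$, so positivity of $\phi$ on $S$ gives $\phi(s_1) \leq \phi(s_2)$, hence $\alpha \leq \alpha'$.

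For the reverse inequality $\min \leq \alpha$, the plan is to construct an extension of $\phi$ to a state on $B$ that sends $x$ to $\alpha$. The case $x \in S$ is trivial since then $\alpha = \phi(x)$. Otherwise, on the real-linear space $S_{\mathrm{sa}} + \mathbb{R} x$ I would define
\[
\psi(s + tx) = \phi(s) + t\alpha, \qquad s = s^* \in S,\ t \in \mathbb{R},
\]
which is well defined because $x \notin S$. The key step, which I expect to be the main obstacle, is verifying that $\psi$ is positive. Suppose $s + tx \geq 0$. The case $t = 0$ is positivity of $\phi|_S$; when $t > 0$, the element $-s/t$ satisfies $-s/t \leq x$ and so is a candidate in the supremum defining $\alpha$, yielding $-\phi(s)/t \leq \alpha$; when $t < 0$, the element $s/|t|$ satisfies $s/|t| \geq x$ and is a candidate in the infimum defining $\alpha'$, yielding $\phi(s)/|t| \geq \alpha' \geq \alpha$. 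Both cases rearrange to $\phi(s) + t\alpha \geq 0$. The delicate point is that the $t < 0$ case forces one to appeal to the comparison $\alpha \leq \alpha'$ established in the first paragraph; without it one could not choose a single value $\lambda = \alpha$ that works in both directions.

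Complexifying $\psi$ via $\psi(a + ib) = \psi(a) + i\psi(b)$ for self-adjoint $a, b$ produces a state on the operator system $S + \mathbb{C} x \subseteq B$. An appeal to Krein's extension theorem for states on operator systems (a consequence of Hahn--Banach) then yields a state $\rho$ of $B$ extending $\psi$; in particular $\rho \in E_\phi$ and $\rho(x) = \alpha$. Since $E_\phi$ is weak-$*$ compact and $\rho \mapsto \rho(x)$ is continuous, the minimum is attained, so $\min\{\rho(x) : \rho \in E_\phi\} \leq \alpha$. Combined with the easy direction this proves the first identity, and the substitution $x \mapsto -x$ delivers the second.
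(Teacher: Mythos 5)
Your proposal is correct and follows essentially the same route as the paper: the easy inequality via $\phi(s)=\rho(s)\leq\rho(x)$, then the construction of a state on $S+\mathbb{C}x$ sending $x$ to the supremum value, with positivity checked by the same two-case argument (your detour through $\alpha\leq\alpha'$ is just a repackaging of the paper's observation that $t\leq x\leq s$ forces $\phi(t)\leq\phi(s)$), followed by Krein extension. The only cosmetic difference is that the paper rescales to $t=\pm1$ and does not invoke weak-$*$ compactness, since the constructed extension already attains the minimum.
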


\begin{proof}[Proof of Proposition \ref{gnProp1}]
We prove the first formula; the second one follows from it by replacing 
$x$ with $-x$.  If $\rho\in E_\phi$ and $s=s^*\leq x$, then 
$$
\phi(s)=\rho(s)\leq \rho(x)
$$
and one obtains $\leq $ after taking the sup over $s$ and the inf over $\rho$.  

For the inequality $\geq$, let $L$ be the left hand side.  We claim that there 
is a $\rho\in E_\phi$ with $L= \rho(x)$.  For the proof, we may assume that $x\notin S$, and 
consider the linear functional defined on the operator system 
$S+\mathbb C\cdot x$ by 
$$
\hat\phi(s+\lambda x)=\phi(s)+\lambda  L, \qquad s\in S,\quad \lambda\in \mathbb C.  
$$
We claim that $\hat\phi$ is a state of $S+\mathbb C\cdot x$.  
Since $\hat\phi(\mathbf 1)=1$, after rescaling, 
this reduces to checking $s+ x\geq 0\implies s+ L\geq 0$ and 
$s-x\geq 0\implies \phi(s)-L\geq 0$, where in both 
cases $s$ is a self-adjoint element of $S$.  

If
$s+x\geq 0$, then $x\geq -s$ so that $-\phi(s)=\phi(-s)\leq L$, hence $\phi(s)+L\geq 0$.  
If $s-x\geq 0$, then for every $t=t^*\in S$ satisfying $t\leq x\leq s$ we 
have $t\leq s$, hence $\phi(t)\leq \phi(s)$ and therefore $L\leq \phi(s)$ by the 
arbitrariness of $t$.  The desired inequality $\phi(s)-L\geq 0$ follows.  

By Krein's extension theorem, $\hat\phi$ can be extended to a state $\rho$ of 
$B$, and such an extension satisfies $\rho\in E_\phi$ and $L=\rho(x)$.  
\end{proof}

\begin{proof}[Proof of Theorem \ref{gnThm1}] Since $A$ must contain 
strictly positive elements, the implication (iii)$\implies$(ii) is trivial.  We 
prove (i)$\implies$(iii) and (ii)$\implies$(i).  

(i)$\implies$(iii):  Let $x$ be a self adjoint element of $A$.  Applying the 
second formula of Proposition 
\ref{gnProp1} to the operator system $\tilde S$ and its state $\phi=\pi_\infty\restriction_{\tilde S}$ 
and noting that $E_\phi=\{\pi_\infty\}$ by hypothesis (i), 
we find that 
$$
\inf\{\lambda\in \mathbb R: \exists s=s^*\in S, \ s+\lambda\cdot\mathbf 1\geq x\}=\pi_\infty(x)=0.  
$$
It follows that there is a sequence $s_n=s_n^*\in S$ such that $s_n+\frac{1}{n}\cdot\mathbf 1\geq x$, 
hence $x$ is almost dominated by $S$.  

(ii)$\implies$(i):  Assuming (ii), let $\rho$ be a state of $\tilde A$ that satisfies 
$\rho\restriction_{\tilde S}=\pi_\infty\restriction_{\tilde S}$.  We have 
to show that $\rho=\pi_\infty$.   
To that end, choose a strictly positive element $p\in A$ 
 that is almost dominated by $S$, and consider the positive linear functional $\sigma\in A^\prime$ 
defined by $\sigma=\rho\restriction_A$.  By the hypothesis on $p$ there is a sequence 
$s_n=s_n^*\in S$ such that $s_n+\frac{1}{n}\cdot\mathbf 1\geq p$ for $n=1,2,\dots$.  
Applying $\rho$ to this inequality and using $\rho(s_n)=\pi_\infty(s_n)=0$, we conclude  
that 
$$
\frac{1}{n}\geq \sigma(p)\geq 0,  \qquad n=1,2,\dots,
$$ 
hence $\sigma(p) = 0$.  
It follows that $\sigma=0$ by strict positivity of $p$, which implies the desired 
conclusion $\rho=\pi_\infty$.  
\end{proof}

The following sufficient condition is easy to check for many examples.   

\begin{cor}\label{gnCor1}
Let $S\subseteq A$ be as in Theorem \ref{gnThm1}.  
If $S$ contains a strictly positive operator of $A$ 
then $\pi_\infty: \tilde A\to \mathbb C$ is a boundary representation for $\tilde S$.  
\end{cor}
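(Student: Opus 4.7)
The plan is to reduce the corollary directly to Theorem \ref{gnThm1} by verifying condition (ii) of that theorem. Concretely, I would produce a strictly positive element of $A$ that is almost dominated by $S$, and then invoke the already-established equivalence (ii)$\implies$(i) of Theorem \ref{gnThm1} to conclude that $\pi_\infty$ is a boundary representation for $\tilde S$.

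First, let $p \in S$ be the strictly positive operator of $A$ assumed to lie in $S$. Since $p \geq 0$, in particular $p = p^*$, so it is a legitimate candidate for the role of the self adjoint operator $x$ appearing in the definition of ``almost dominated''. To exhibit almost domination of $p$ by $S$, I would simply take the constant sequence $s_n := p \in S$. Then each $s_n$ is self adjoint and
\[
s_n + \tfrac{1}{n}\cdot\mathbf 1 \;=\; p + \tfrac{1}{n}\cdot\mathbf 1 \;\geq\; p \;=\; x, \qquad n = 1,2,\dots,
\]
so $p$ is almost dominated by $S$ in the sense of Section \ref{S:gn}. Thus $A$ contains a strictly positive operator that is almost dominated by $S$, which is exactly condition (ii) of Theorem \ref{gnThm1}.

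Applying that theorem, condition (i) holds, i.e.\ $\pi_\infty$ is a boundary representation for $\tilde S$, which is the assertion of the corollary. There is essentially no obstacle: the only thing to check is that a strictly positive operator already lying in $S$ automatically qualifies as ``almost dominated by $S$,'' and this is immediate from the constant sequence construction. All the real work has been done in Theorem \ref{gnThm1} and in Proposition \ref{gnProp1}, and the corollary is merely the convenient packaging of its most easily verified sufficient condition.
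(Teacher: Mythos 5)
Your proposal is correct and matches the paper's proof exactly: the paper also observes that a strictly positive $p\in S$ satisfies condition (ii) of Theorem \ref{gnThm1}, and you have merely spelled out the (trivial) verification that $p$ is almost dominated by $S$ via the constant sequence $s_n=p$.
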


\begin{proof}
If  $S$ itself contains a strictly positive operator $p$, then condition (ii) 
of Theorem \ref{gnThm1} is satisfied.  
\end{proof}

\section{Peaking representations  }\label{S:po}

Given an exact sequence of \cstar s 
$$
0\longrightarrow K\longrightarrow A\longrightarrow B\longrightarrow 0
$$
in which $a\in A\mapsto \dot a\in A/K= B$ is  
the natural quotient map, recall that every nondegenerate representation 
$$
\pi: A\to \mathcal B(H)
$$ 
of $A$ decomposes uniquely into a central direct sum 
of representations 
$$
\pi=\pi_K\oplus \pi_B
$$
where $\pi_K$ is the unique extension to $A$ of a nondegenerate representation 
of the ideal $K$, and where $\pi_B$ is a nondegenerate 
representation of $A$ that annihilates 
$K$.  When $\pi=\pi_K$ we say that $\pi$ {\em lives on} $K$.  In 
an obvious sense, 
the spectrum of $A$ decomposes into a disjoint union 
\begin{equation}\label{poEq1}
\hat A = \hat K \cup \hat B.  
\end{equation}

Now let $S\subseteq \mathcal B(H)$ be a concrete operator system that generates a 
\cstar \ $A$.  In general, the set $K$ of all compact operators in $A$ is a closed 
two-sided ideal.  In this section 
we address the problem of identifying the points of 
$\hat K$ that correspond to boundary representations for $S$ 
in cases where $K\neq \{0\}$, 
and we show 
how one can identify the boundary representations of $\hat K$ as 
noncommutative counterparts of peak points of function systems.

\begin{defn}  Let $S$ be a separable operator system that generates a \cstar\ $A$.  
An irreducible representation $\pi: A\to\mathcal B(H)$ is said to be {\em peaking} 
for $S$ if there is an $n\geq 1$ and an $n\times n$ matrix $(s_{ij})$ over 
$S$ such that 
\begin{equation}\label{poEq2}
\|(\pi(s_{ij}))\|>\|(\sigma(s_{ij}))\|
\end{equation}
for every irreducible representation $\sigma$ inequivalent to $\pi$, 
written  $\sigma\nsim \pi$.
$\pi$ is said to be {\em strongly peaking} if there is an $n\geq 1$ and an 
$n\times n$ matrix $(s_{ij})$ over $S$ such that 
\begin{equation}\label{poEq3}
\|(\pi(s_{ij}))\|>\sup_{\sigma\nsim\pi}\|(\sigma(x_{ij}))\|.   
\end{equation}
\end{defn}

An $n\times n$ matrix $(s_{ij})$ satisfying (\ref{poEq2}) (resp.\,(\ref{poEq3}) )is 
called a {\em peaking operator} (resp. {\em strong peaking operator}) for $\pi$.  Strongly 
peaking irreducible representations correspond to isolated points of $\hat A$, 
and they arise naturally when compact operators are present - such as in the setting  
of Theorem \ref{poThm1} below.  We shall have nothing more to say about 
peaking representations that are not strongly peaking in this paper.

The following characterization 
of boundary representations generalizes the Boundary Theorem of \cite{arvSubalgII}, 
and provides the basis for more concrete results on hyperrigid 
sets of compact operators such as Corollary \ref{poCor1} and Theorem \ref{voThm1}.

\begin{thm}\label{poThm1} Let $S\subseteq \mathcal B(H)$ be a separable concrete operator 
system and let $A$ be the \cstar\ generated by $S$.  Let $K$ be the ideal of 
all compact operators in $A$, assume that $K\neq \{0\}$, and  
let $\hat K$ be the set 
of unitary equivalence classes of irreducible representations of $A$ that live on $K$.  

Then $\hat K$ contains boundary representations for $S$ iff the quotient map 
$$
x\in A\mapsto \dot x\in A/K
$$ 
is {\em not} completely isometric on $S$.  Assuming that is the case, then 
among the irreducible representations of $\hat K$, 
the boundary representations for $S$ 
are precisely the strongly peaking 
ones.  
\end{thm}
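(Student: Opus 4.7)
The plan is to establish the two assertions by combining Theorem~\ref{ncbThm1} with the fact (recalled between Theorem~\ref{ncbThm1} and Corollary~\ref{ncbCor1}) that the boundary ideal $J$ of $S$---the largest ideal of $A$ on which the quotient is completely isometric on $S$---is the intersection of the kernels of all boundary representations for $S$.

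For the first assertion, if the quotient $q\colon A\to A/K$ is completely isometric on $S$, then $K\subseteq J$ by the maximality of $J$, so every boundary representation annihilates $K$ and none lies in $\hat K$. Conversely, if $q$ is not completely isometric on $S$, then $K\not\subseteq J = \bigcap_\pi\ker\pi$, so some boundary representation $\pi$ fails to annihilate $K$; being irreducible, $\pi$ is then nondegenerate on the two-sided ideal $K$, hence lies in $\hat K$.

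For the second assertion, the easy direction is that every strongly peaking $\pi$ must already be a boundary representation. If $T=(s_{ij})\in M_n(S)$ is a strong peaking matrix and $\pi$ were not a boundary representation, then Theorem~\ref{ncbThm1} would give
\[
\|T\|_{M_n(A)}=\sup_{\sigma\text{ boundary}}\|\sigma^{(n)}(T)\|\le\sup_{\sigma\nsim\pi}\|\sigma^{(n)}(T)\|<\|\pi^{(n)}(T)\|\le\|T\|_{M_n(A)},
\]
a contradiction.

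The main obstacle is the reverse direction: producing a strong peaking matrix starting from a boundary representation $\pi\in\hat K$. Since $K$ is a $C^*$-subalgebra of $\mathcal K(H)$, the structure theorem gives $K=\bigoplus_\alpha\mathcal K(H_\alpha)$ indexed by $\hat K$, and each $\sigma_\alpha\in\hat K$ is the unique extension to $A$ of the standard faithful representation of the summand $K_\alpha=\mathcal K(H_\alpha)$, vanishing on $K_\beta$ for $\beta\neq\alpha$. Let $K_\pi$ be the summand corresponding to $\pi$. I will first verify that $K_\pi$ is an ideal of $A$: for $a\in A$ and $k\in K_\pi$ one has $ak\in K$, and for every $\beta\neq\pi$, $\sigma_\beta(ak)=\sigma_\beta(a)\sigma_\beta(k)=0$, so by faithfulness of $\sigma_\beta$ on $K_\beta$ the $K_\beta$-component of $ak$ vanishes; hence $ak\in K_\pi$. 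With $K_\pi$ an $A$-ideal, the irreducible representations of $A/K_\pi$ are exactly the irreducible $\sigma\in\hat A$ that annihilate $K_\pi$, and these are precisely the $\sigma\nsim\pi$ (every $\sigma\in\hat B$ kills all of $K$, and every $\sigma_\beta\in\hat K$ with $\beta\neq\pi$ kills $K_\pi$). Since $\pi(K_\pi)=\mathcal K(H_\pi)\neq 0$, $K_\pi$ is not contained in $J$, so the quotient $A\to A/K_\pi$ fails to be completely isometric on $S$. Choosing $T\in M_n(S)$ that witnesses this failure yields
\[
\|T\|_{M_n(A)}>\|q_\pi^{(n)}(T)\|=\sup_{\sigma\in\hat A,\ \sigma\nsim\pi}\|\sigma^{(n)}(T)\|,
\]
and combined with $\|T\|_{M_n(A)}=\sup_{\sigma\in\hat A}\|\sigma^{(n)}(T)\|$ this forces $\|\pi^{(n)}(T)\|$ to equal $\|T\|_{M_n(A)}$ and strictly exceed $\sup_{\sigma\nsim\pi}\|\sigma^{(n)}(T)\|$, which is precisely the strong peaking property for $\pi$.
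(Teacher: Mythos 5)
Your proof is correct, and while the first assertion and the ``strongly peaking $\Rightarrow$ boundary'' direction follow essentially the paper's lines (the latter is verbatim the paper's argument via Theorem \ref{ncbThm1}), your treatment of the hard direction --- boundary $\Rightarrow$ strongly peaking --- is genuinely different. The paper argues by contradiction: assuming $\pi_1$ is not strongly peaking, it builds a completely contractive, hence UCP, map from $\tilde\rho(S)$ onto $\pi_1(S)$ where $\tilde\rho$ omits $\pi_1$, extends it by the Arveson extension theorem, invokes the unique extension property to force $\phi\circ\tilde\rho=\pi_1$ on all of $A$, and then uses the representation theory of $C^*$-algebras of compact operators to conclude $\pi_1\sim\pi_r$ for some $r\geq 2$, a contradiction. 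You instead isolate the minimal ideal $K_\pi\cong\mathcal K(H_\pi)$ inside $K$, verify it is an $A$-ideal, observe that the irreducible representations annihilating $K_\pi$ are exactly those inequivalent to $\pi$, and then use the maximality of the boundary (Shilov) ideal $J=\bigcap_\sigma\ker\sigma$ to conclude that $A\to A/K_\pi$ is not completely isometric on $S$; a witness matrix is then automatically a strong peaking matrix because the quotient norm computes $\sup_{\sigma\nsim\pi}$. This is shorter and more conceptual, but it leans on two pieces of background the paper's argument for this direction avoids: the existence of $J$ as the \emph{largest} ideal with completely isometric quotient, and its identification with the intersection of kernels of boundary representations (Theorem 2.2.3 of \cite{arvSubalgI} together with Theorem \ref{ncbThm1}); the paper's route needs only the extension theorem and the unique extension property. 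Both are sound; yours buys brevity at the cost of invoking the Shilov-ideal machinery, the paper's is more self-contained but requires the delicate $\pi_1\sim\pi_r$ contradiction.
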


\begin{proof}  If $\hat K$ contains no boundary representations, then because 
of the dichotomy 
(\ref{poEq1}), every 
boundary representation must annihilates $K$, and consequently it  
factors through the quotient map $a\in A\mapsto \dot a\in A/K$.  By Theorem \ref{ncbThm1}, 
there are sufficiently many boundary representations $\pi_i$, $i\in I$, for $S$ so that 
$$
\|(\dot s_{ij})\|\leq \|(s_{ij})\|=\sup_{i\in I}\|(\pi_i(s_{ij}))\|\leq \|(\dot s_{ij})\|
$$
for every $n\times n$ matrix $(s_{ij})$ over $S$ and every $n\geq 1$.  Hence 
the quotient map is completely isometric on $S$.  Conversely, if the quotient map 
is completely isometric on $S$, then we claim that no $\pi\in \hat K$ can be a 
boundary representation.  Indeed, for every irreducible representation 
$\pi: A\to \mathcal B(H_\pi)$ that lives in $K$, the hypothesis implies that the map 
$$
\dot s\in \dot S\subseteq A/K\mapsto \pi(s)
$$
is completely positive, and hence can be extended to a completely 
positive linear map $\phi: A/K\to \mathcal B(H_\pi)$.  The 
map $a\in A\mapsto \phi(\dot a)$ is therefore a completely positive linear map 
that restricts to $\pi$ on $S$, and which annihilates $K$.  This map 
differs from $\pi$ because 
$\pi$ lives in $K$, hence  
$\pi$ does not have the unique extension property.

Turning now to the proof of the last sentence, enumerate the distinct elements 
of $\hat K$ as $\{\pi_1, \pi_2,\dots\}$, and view each $\pi_k$ as an irreducible 
subrepresentation of the identity representation of $A$, so that 
$\pi_k(a)=a\restriction_{H_k}$, $a\in A$, 
where $H_1, H_2, \dots\subseteq H$ are mutually orthogonal 
reducing subspaces for $A$.  

Assuming first that $\pi_1$, say, is a boundary representation for $S$, we claim 
that $\pi_1$ is strongly peaking for $S$.  Indeed, if $\pi_1$ were not strongly peaking, then 
for every $n\geq 1$ and every $n\times n$ matrix $(s_{ij})$ over $S$ we would have 
$$
\|(\pi_1(s_{ij})\|\leq \sup_\sigma\max(\|(\sigma(s_{ij}))\|, \|(\pi_2(s_{ij}))\|, \|(\pi_3(s_{ij}))\|,\dots).  
$$
where $\sigma$ ranges over all irreducible representations of $A$ that annihilate $K$.  
Let $\rho: A/K\to \mathcal B(L)$ be a faithful representation of $A/K$ and consider the 
representation $\tilde\rho$ of $A$ defined by 
$$
\tilde\rho(a)=\rho(\dot a)\oplus \pi_2(a)\oplus\pi_3(a)\oplus\cdots.  
$$ 
 The preceding 
inequalities imply that the map 
$$
\tilde\rho(s) \mapsto \pi_1(s), \qquad s\in S, 
$$
is completely contractive.  Since it is also unit-preserving, it must be completely 
positive, and hence by the extension theorem of \cite{arvSubalgI} there is a UCP 
map $\phi: \tilde\rho(A)\to \mathcal B(H_{\pi_1})$ such that 
$$
\phi(\tilde\rho(s))=\pi_1(s),\qquad s\in S.    
$$
Since the UCP map $\phi\circ\tilde\rho: A\to \mathcal B(H_{\pi_1})$ extends 
$\pi\restriction_S$ and $\pi_1$ is assumed to be a boundary representation for $S$, 
it follows that $\phi\circ\tilde\rho=\pi$ on $A$, and in particular, 
$$
\phi(\tilde\rho(k))=\pi_1(k),\qquad k\in K.  
$$
Noting that for $k\in K$, 
$$
\tilde\rho(k)=0\oplus\pi_2(k)\oplus\pi_3(k)\oplus\cdots, 
$$
it follows that the map $\pi_2(k)\oplus\pi_3(k)\oplus\cdots\mapsto \pi_1(k)$ is 
completely contractive, or equivalently, that the map 
$$
k\restriction_{H_2\oplus H_3\oplus\cdots}\mapsto \pi_1(k),\qquad k\in K, 
$$
defines an irreducible representation of the \cstar\ $K_0=K\restriction_{H_2\oplus H_3\oplus\cdots}$.  
Since $K_0$ is a \cstar\ of compact operators, $\pi_1$ must be unitarily equivalent to 
one of the irreducible subrepresentations of the identity representation 
of $K_0$, namely $\pi_2, \pi_3,\dots$, say 
$\pi_1\sim\pi_r$, for some $r\geq 2$.  It follows that $\pi_1$ is equivalent to $\pi_r$, 
and we have arrived at a contradiction.  Hence $\pi_1$ must have been strongly peaking for $S$.  

Conversely, assume that one of the elements of $\hat K$, say $\pi_1$, is strongly peaking.  Let 
$\{\sigma_i: i\in I\}$ be a complete set of mutually inequivalent boundary representations 
for $S$.  We claim that $\pi_1$ is equivalent to some $\sigma_i$, and is therefore a 
boundary representation.  Indeed, if that were not the case, 
then by definition of strong peaking representation (\ref{poEq3}), there would be an $n\geq 1$ and 
an $n\times n$ matrix $(s_{ij})$ over $S$ such that 
\begin{equation}\label{poEq4}
\|(\pi_1(s_{ij}))\|>\sup_{i\in I}\|(\sigma_i(s_{ij}))\|.  
\end{equation}
On the other hand, since the list $\{\sigma_i: i\in I\}$ contains all boundary representations up 
to equivalence, 
Theorem \ref{ncbThm1} implies that the right side of (\ref{poEq4}) is $\|(s_{ij})\|$.  
We conclude that $\|(\pi_1(s_{ij}))\|>\|(s_{ij})\|$, and hence the 
completely bounded norm of $\pi_1\restriction_S$ is $>1$.  But representations are completely 
contractive, hence the assumption that $\pi_1\nsim \sigma_i$ for all 
$i\in I$ was false.  
\end{proof}

The following result provides concrete criteria for checking hyperrigidity 
for generators of \cstar s of compact operators.  See Section \ref{S:vo} for 
specific examples of how one makes use of it.

\begin{cor}\label{poCor1}
Let $G\subseteq \mathcal B(H)$ be a finite or countably infinite set of compact operators 
on an infinite dimensional Hilbert space, 
let $S$ be the linear span of $G\cup G^*$ 
and let $A$ be the \cstar\ generated by $G$.  Then $G$ is hyperrigid iff 
\begin{enumerate}
\item[(a)]
Every irreducible subrepresentation of the identity representation of $A$ is 
strongly peaking for the operator system $S+\mathbb C\cdot\mathbf 1$, and 
\item[(b)]  $S$ almost dominates 
some strictly positive operator in $A$.  
\end{enumerate}
\end{cor}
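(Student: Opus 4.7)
The plan is to reduce hyperrigidity of $G$ to a boundary-representation question on the unitization $\tilde A = A + \mathbb C\cdot\mathbf 1$ with operator system $\tilde S = S + \mathbb C\cdot\mathbf 1$, and then extract conditions (a) and (b) from Theorems \ref{poThm1} and \ref{gnThm1}. Since UCP maps already fix the identity, hyperrigidity of the generating set $G$ (in the sense of Definition \ref{inDef1}) is equivalent to hyperrigidity of $\tilde S$ as an operator system in $\tilde A$ (in the sense of Theorem \ref{mrThm1}). Because $G$ consists of compact operators on an infinite-dimensional space, $A\subseteq\mathcal B(H)$ is a separable non-unital \cstar\ of compact operators; its spectrum is countable, and consequently so is that of $\tilde A$, which is $\hat A\cup\{\pi_\infty\}$.

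Combining Theorem \ref{csThm1} with Corollary \ref{ncbCor1}, hyperrigidity of $\tilde S$ is equivalent to the assertion that every irreducible representation of $\tilde A$ is a boundary representation for $\tilde S$. The short exact sequence $0\to A\to \tilde A\to \mathbb C\to 0$ decomposes the irreducible representations of $\tilde A$ into two classes: those living on the compact ideal $A$, which are precisely the (trivial extensions to $\tilde A$ of the) irreducible subrepresentations of the identity representation of $A$, and the one-dimensional representation $\pi_\infty$ at infinity.

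For the first class I would invoke Theorem \ref{poThm1} with the compact ideal $K$ there equal to our $A$. In the nontrivial case $S\neq\{0\}$, any nonzero $s\in S$ is a compact operator of positive norm that is killed by the quotient $\tilde A\to \mathbb C$, so that quotient is not completely isometric on $\tilde S$; Theorem \ref{poThm1} then asserts that an irreducible representation of $\tilde A$ living on $A$ is a boundary representation for $\tilde S$ precisely when it is strongly peaking for $\tilde S$. This is exactly condition (a). For $\pi_\infty$, the equivalence (i)$\Leftrightarrow$(ii) of Theorem \ref{gnThm1} characterizes $\pi_\infty$ as a boundary representation for $\tilde S$ by the requirement that $S$ almost dominates some strictly positive operator in $A$, which is condition (b).

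Assembling: (a) and (b) together are equivalent to every irreducible representation of $\tilde A$ being a boundary representation for $\tilde S$, and therefore to hyperrigidity of $G$. The argument is essentially a splicing of the machinery developed in Sections \ref{S:cs}--\ref{S:po} rather than genuinely new work, and I expect the only mild obstacle to be the bookkeeping reduction from non-unital generating-set hyperrigidity to operator-system hyperrigidity of $\tilde S$, which is handled by a direct check against Definition \ref{inDef1} using that UCP maps automatically preserve the unit.
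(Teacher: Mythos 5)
Your proposal is correct and follows essentially the same route as the paper's own proof: pass to $\tilde A$ and $\tilde S$, use Theorem \ref{poThm1} to identify condition (a) with the boundary-representation property for the irreducible subrepresentations of the identity representation, use Theorem \ref{gnThm1} for $\pi_\infty$ and condition (b), and close the loop with Corollary \ref{ncbCor1} and Theorem \ref{csThm1} via countability of the spectrum. The only additions are your explicit checks that the quotient onto $\mathbb C$ is not completely isometric on $\tilde S$ and that generating-set hyperrigidity reduces to hyperrigidity of $\tilde S$, both of which the paper leaves implicit.
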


\begin{proof}  Let $\tilde A=A+\mathbb C\cdot\mathbf 1$ be the unitalization of 
$A$ and let 
$\tilde S$ be the operator system spanned by $G\cup G^*$ and the identity 
operator.  The irreducible representations of $\tilde A$ are the 
irreducible subrepresentations $\pi_1, \pi_2,\dots$ 
of the identity representation of $\tilde A$, 
together with the one dimensional representation $\pi_\infty(a+\lambda\cdot\mathbf 1)=\lambda$, 
$a\in A$, $\lambda\in \mathbb C$.  By Theorem \ref{poThm1}, (a) is equivalent to the assertion 
that every irreducible subrepresentation of the 
identity representation is a boundary 
representation for $\tilde S$, and by Theorem \ref{gnThm1}, 
(b) is equivalent to the assertion that $\pi_\infty$ 
is a boundary representation for $\tilde S$.  Since the spectrum of $\tilde A$ is countable, 
Corollary \ref{ncbCor1} and Theorem 
\ref{csThm1} show that these assertions are equivalent to the hyperrigidity 
of $G$.  
\end{proof}

\section{Applications II: Volterra type operators }\label{S:vo}

In this section we identify a broad class of irreducible compact operators 
that includes the Volterra integration operator on $L^2[0,1]$, we 
show that for such operators $V$, $\mathcal G=\{V, V^2\}$ is a hyperrigid generator 
for the \cstar\ of compact operators, 
but that the smaller generator $\mathcal G_0=\{V\}$ is not hyperrigid.

By standard spectral theory, 
every self-adjoint operator $B$ decomposes uniquely into a difference 
$B=B_+-B_-$, where $B_\pm\geq 0$ and $B_+B_-=0$.  
A self-adjoint operator $B\in\mathcal B(H)$ is said to be {\em essential} if its 
positive and negative parts $B_+$ and $B_-$ both have infinite rank.  A straightforward 
argument shows that if $B$ is essential and $F$ is a self-adjoint finite rank 
operator, then $B+F$ is also essential.  

\begin{thm}\label{voThm1}  Let $V\in\mathcal B(H)$ be an irreducible compact operator 
with cartesian decomposition $V=A+iB$, where $A$ is a finite rank positive 
operator and $B$ is essential with $\ker B=\{0\}$.  Then 
\begin{enumerate}
\item[(i)] $\mathcal G=\{V, V^2\}$ is a hyperrigid generator for the \cstar\ $\mathcal K$
of compact operators.  In 
particular, for every sequence of unital completely positive maps  
$\phi_n:\mathcal B(H)\to\mathcal B(H)$ that satisfies  
$$
\lim_{n\to\infty}\|\phi_n(V)-V\|=\lim_{n\to\infty}\|\phi_n(V^2)-V^2\|= 0
$$
one has 
$$
\lim_{n\to\infty}\|\phi_n(K)-K\|=0 
$$
for every compact operator $K$.  
\item[(ii)] The subset 
$\mathcal G_0=\{V\}$ is not a hyperrigid generator.  
\end{enumerate}
\end{thm}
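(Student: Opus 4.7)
The plan is to apply Corollary~\ref{poCor1} for part~(i), and to establish part~(ii) by exhibiting a state of $\tilde A=\mathcal K+\mathbb C\cdot\mathbf 1$ that extends $\pi_\infty\restriction_{\tilde S_0}$ but differs from $\pi_\infty$ on $\mathcal K$; this will violate the unique extension property, producing a non-boundary irreducible representation and so, by Corollary~\ref{ncbCor1}, ruling out hyperrigidity of $\mathcal G_0$. Here I set $\tilde S=\spn\{\mathbf 1,V,V^*,V^2,V^{*2}\}$ and $\tilde S_0=\spn\{\mathbf 1,V,V^*\}$. The structural starting point is the Cartesian decomposition $V=A+iB$: from $V\pm V^*$ and $V^2\pm V^{*2}$ I read off that $A$, $B$, $A^2-B^2$, and $AB+BA$ all lie in the self-adjoint part of $S$, while only $A$ and $B$ lie in the self-adjoint part of $S_0$.

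For part~(i), since $V$ is irreducible and compact, $A=\mathcal K$ and the identity representation is the unique irreducible subrepresentation of the identity of $A$, while the only other irreducible representation of $\tilde A$ is $\pi_\infty$. Condition~(a) of Corollary~\ref{poCor1} is then immediate: the $1\times 1$ matrix $(V)$ satisfies $\|V\|>0=\|\pi_\infty(V)\|$. For condition~(b) I take
\[
s=(1+\|A\|)\,A+(B^2-A^2)\in S,
\]
and verify that $s-(A+B^2)=\|A\|\,A-A^2=A(\|A\|\mathbf 1-A)\geq 0$, using that $A\geq 0$ commutes with $\|A\|\mathbf 1-A\geq 0$. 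Since $\ker B=\{0\}$ forces $\ker(A+B^2)=\{0\}$, the positive compact operator $A+B^2$ is strictly positive in $\mathcal K$; hence the constant sequence $s_n\equiv s$ certifies that $S$ almost dominates $A+B^2$, and Corollary~\ref{poCor1} delivers part~(i).

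For part~(ii), I construct a unit vector $\eta\in H$ satisfying $\langle A\eta,\eta\rangle=\langle B\eta,\eta\rangle=0$ and take the vector state $\omega(x):=\langle x\eta,\eta\rangle$. The computation $\omega(V)=\langle A\eta,\eta\rangle+i\langle B\eta,\eta\rangle=0=\pi_\infty(V)$ together with $\omega(\mathbf 1)=1=\pi_\infty(\mathbf 1)$ shows that $\omega$ agrees with $\pi_\infty$ on $\tilde S_0$, while $\omega(|\eta\rangle\langle\eta|)=1\neq 0=\pi_\infty(|\eta\rangle\langle\eta|)$ shows that $\omega\neq\pi_\infty$. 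To produce $\eta$, I work inside $\ker A$, a subspace of finite codimension since $A$ has finite rank. The essentiality of $B$ ensures that the spectral projections $\chi_{(0,\infty)}(B)$ and $\chi_{(-\infty,0)}(B)$ both have infinite rank, and a codimension count shows each meets $\ker A$ in an infinite-dimensional subspace; hence the continuous function $\eta\mapsto\langle B\eta,\eta\rangle$ takes both positive and negative values on the path-connected unit sphere of $\ker A$, and the intermediate value theorem supplies a zero.

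The main obstacle lies in locating the element $s$ of $S$ used in part~(i). The crucial ingredient is the availability of $B^2-A^2\in S$ coming from $V^2$, together with the commutativity of $A$ with $\|A\|\mathbf 1-A$. This mechanism is absent for $S_0$, which contains no quadratic information about $V$, and this absence is what ultimately prevents $S_0$ from almost dominating any strictly positive compact operator, as made concrete by the $\eta$-construction of part~(ii).
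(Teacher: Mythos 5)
Your proposal is correct and follows essentially the same route as the paper: for (i) you use the real part of $V^2$ to manufacture a self-adjoint element of $S$ dominating the strictly positive operator $A+B^2$ (the paper's element $cA+(B^2-A^2)\geq B^2$ is the same device), combine it with the trivially peaking identity representation, and invoke Corollary~\ref{poCor1}; for (ii) you exhibit a normal state vanishing on $\spn\{\mathbf 1,V,V^*\}$ by exploiting that $\ker A$ has finite codimension while both spectral halves of $B$ are infinite-dimensional. The only cosmetic difference is in (ii), where the paper takes the sum of two vector functionals $\omega_{\xi_+}+\omega_{\xi_-}$ balanced so the $B$-contributions cancel, while you obtain a single vector state by an intermediate value argument on the unit sphere of $\ker A$ — both are valid and rest on the same mechanism.
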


\begin{proof}  Note first that $V$ must generate the full \cstar\ $\mathcal K$ of compact 
operators, since $\mathcal K$ contains no proper irreducible $C^*$-subalgebras.  Let 
$\mathcal S$ be the linear span of $V, V^*, V^2, V^{2*}$ and let 
$\tilde{\mathcal S}=\mathcal S+\mathbb C\cdot \mathbf 1$.  
Then $\tilde{\mathcal S}$ is an operator system generating the \cstar\ 
$\mathcal K+\mathbb C\cdot\mathbf 1$, whose irreducible representations are $\pi_\infty$ 
and, up to equivalence, the identity representation.

(i):  The cartesian decomposition of $V^2=(A+iB)(A+iB)$ is 
$$
V^2=(A^2-B^2)+i(AB+BA).
$$
  Since $A$ is a positive finite rank operator, we can find a $c>0$ so that 
$A^2\leq c\cdot A$, hence 
$$
-c\cdot A+(A^2-B^2)\leq -B^2 < 0
$$
is a strictly negative operator in $\mathcal S$.  Corollary \ref{gnCor1}   
implies that $\pi_\infty$ is a boundary representation for 
$\tilde{\mathcal S}$.  The other irreducible 
representation of $C^*(\tilde{\mathcal S})$ is 
equivalent to the identity representation, and obviously $V$ is itself a peaking 
operator for the identity representation restricted to $\mathcal S$.  
Theorem \ref{poThm1} implies that the identity representation is a boundary 
representation for $\tilde{\mathcal S}$, so by Corollary \ref{poCor1}, $\mathcal G=\{V, V^2\}$ is a hyperrigid 
generator for $\mathcal K$.  

(ii):  Consider the operator space $S_0={\rm span}\{A, B\}$ and let $Q$ be the projection on $AH^\perp$.  Since 
$Q$ is of finite codimension, $QBQ$ is also essential, and using the spectral theorem 
we can write 
$$
QBQ=C_+-C_-
$$
where $C_\pm\geq 0$ and $C_+C_-=0$, both nonzero.  Choose vectors $\xi_\pm\in C_\pm H$ such 
that 
$$
\langle C_+\xi_+,\xi_+\rangle=\langle C_-\xi_-,\xi_-\rangle>0,   
$$
and let $\rho=\omega_{\xi_+}+\omega_{\xi_-}$.  $\rho$ is a nonzero positive normal 
functional that satisfies $\rho(B)=\rho(QBQ)=0$ and $\rho(A)=0$ because $\rho$ 
lives in $AH^\perp$.  Hence $\rho(S_0)=\{0\}$, and it follows after normalization 
that $\rho$ is a normal state other than $\pi_\infty$ that agrees with $\pi_\infty$ 
on the span of $\{\mathbf 1, V, V^*\}$.  Therefore $\pi_\infty$ is not a boundary 
representation, so by Corollary \ref{ncbCor1}, $\{V\}$ is not a hyperrigid generator 
of $\mathcal K$.  
\end{proof}

Now let $V$ be the standard Volterra operator acting on $L^2[0,1]$, 
\begin{equation}\label{voEq2}
Vf(x)=\int_0^x f(t)\,dt, \qquad f\in L^2[0,1].  
\end{equation}

\begin{lem}\label{voLem1}
The real part of $V$ is $\frac{1}{2}E$, where $E$ 
is the projection on the one dimensional space 
of constant functions.  The imaginary part of $V$ is unitarily equivalent 
to the following diagonal operator $D$ on $\ell^2(\mathbb Z)$: 
\begin{equation}\label{voEq1}
(Du)(n)=\frac{-1}{(2n+1)\pi}u(n), \qquad n\in\mathbb Z, \quad u\in \ell^2(\mathbb Z).  
\end{equation}
In particular, $V$ belongs to the Schatten class $\mathcal L^p$ iff $p>1$.  
\end{lem}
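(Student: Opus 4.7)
The plan is three-fold: compute $V^*$ explicitly, read off the Cartesian decomposition from $V+V^*$ and $V-V^*$, and diagonalize the self-adjoint imaginary part $B$ by solving a first-order ODE with a two-point boundary condition.

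First, a Fubini calculation gives $(V^*g)(t) = \int_t^1 g(x)\,dx$. Adding yields $(V+V^*)f(x) = \int_0^1 f(t)\,dt = \langle f,\mathbf{1}\rangle\cdot \mathbf{1}$, i.e.\ the rank-one projection $E$ onto the constants; hence $\mathrm{Re}(V) = \tfrac12 E$.

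Next, set $B = \tfrac{1}{2i}(V-V^*)$, whose integral kernel is $\tfrac{1}{2i}\operatorname{sgn}(x-t)$. Two elementary computations are the engine of the argument: differentiating in $x$ gives $(Bf)'(x) = -if(x)$, and evaluating at the endpoints gives $(Bf)(0)+(Bf)(1)=0$. So any nonzero eigenvector $Bf=\lambda f$ must satisfy the linear ODE $\lambda f' = -if$ together with the boundary condition $f(0)+f(1)=0$; this forces $f(x) = c\, e^{-ix/\lambda}$ with $e^{-i/\lambda}=-1$, and therefore $\lambda = \lambda_n := -1/((2n+1)\pi)$ with normalized eigenfunction $f_n(x)=e^{i(2n+1)\pi x}$, $n\in\mathbb Z$.

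To identify $B$ unitarily with the diagonal operator $D$ I must then check that $\{f_n\}_{n\in\mathbb Z}$ is an orthonormal basis of $L^2[0,1]$. Orthonormality is immediate from $\int_0^1 e^{2\pi i(n-m)x}\,dx = \delta_{nm}$. For completeness I would write $f_n(x) = e^{i\pi x}\cdot e^{2\pi i n x}$ and use that $\{e^{2\pi i n x}\}_{n\in\mathbb Z}$ is the standard Fourier basis together with unitarity of multiplication by $e^{i\pi x}$; this is the step that I expect to require the most care, since one must rule out the possibility that the ODE-eigenvectors only span a proper reducing subspace of the compact self-adjoint $B$.

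Finally, for the Schatten-class statement I would use $V\in\mathcal L^p \iff V^*\in\mathcal L^p$ to conclude that $V\in\mathcal L^p$ iff both $\mathrm{Re}(V)$ and $\mathrm{Im}(V)$ are; since $\mathrm{Re}(V)$ has rank one, this reduces to $\{\lambda_n\}\in\ell^p(\mathbb Z)$, i.e.\ $\sum_{n\in\mathbb Z}\bigl|(2n+1)\pi\bigr|^{-p}<\infty$, which holds precisely when $p>1$.
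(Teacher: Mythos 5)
Your proposal is correct and follows essentially the same route as the paper's sketch: compute $V^*$, observe $V+V^*=E$, and diagonalize the imaginary part by reducing the eigenvalue equation to a first-order ODE whose admissible frequencies are quantized by a boundary condition. You are somewhat more careful than the paper (making the boundary condition $f(0)+f(1)=0$ explicit, verifying completeness of the eigenfunctions via the Fourier basis, and spelling out the Schatten-class deduction), but the underlying argument is the same.
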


\begin{proof} This result is surely known, and we merely sketch the 
argument.  The adjoint of $V$ is the operator 
$$
V^*f(x)=\int_x^1 f(t)\,dt.  
$$
It follows that $V+V^*$ is the projection on the space of constants, and 
moreover $V^*=E-V$, so that $i\Im V$ is the skew adjoint compact operator 
$$
A=\frac{1}{2}(V-V^*)= V-\frac{1}{2}E.  
$$
To solve the eigenvalue problem for $A$ one sets $Af=\lambda f$ and 
differentiates (in the sense of distributions) to obtain $f=\lambda f^\prime$.  
There are no nonzero solutions $f$ when $\lambda=0$, and for $\lambda\neq 0$ 
we must have $f(x)=C\cdot e^{\omega x}$ for some imaginary $\omega\in\mathbb C$.  Substitution of 
the latter expression for $f$ in the equation $Af=\lambda f$ leads to a solution iff 
$\omega=(2n+1)\pi i$ for some $n\in\mathbb Z$, and the 
possible values of $\lambda$ are 
$$
\lambda_n=\frac{1}{\omega_n}=\frac{1}{(2n+1)\pi i}, \qquad  n\in \mathbb Z,   
$$
with corresponding eigenfunctions $f_n(x)=e^{(2n+1)\pi i x}$, $n\in\mathbb Z$.   
In particular, the asserted form (\ref{voEq1}) for the imaginary part of 
$V$ follows.  
\end{proof}

We conclude: 

\begin{cor}\label{voCor1}  The Volterra operator $V$ of (\ref{voEq2}) satisfies the hypotheses of 
Theorem \ref{voThm1} above, and therefore its conclusion as well.  
\end{cor}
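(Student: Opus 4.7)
The plan is to verify, one by one, that the Volterra operator $V$ on $L^2[0,1]$ meets each of the hypotheses of Theorem \ref{voThm1}, after which the conclusion transfers mechanically. The hypotheses to check are: (a) $V$ is an irreducible compact operator; (b) the real part $A = \re V$ is positive and of finite rank; and (c) the imaginary part $B = \Im V$ is essential, with $\ker B = \{0\}$. All three follow directly from Lemma \ref{voLem1}, so no new ingredient is needed at this stage.

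For (a), irreducibility of $V$ is recalled explicitly in the statement of Theorem \ref{inThm1} (and again at the beginning of the proof of Theorem \ref{voThm1}), and compactness is subsumed in the Schatten-class conclusion of Lemma \ref{voLem1}, which yields $V \in \mathcal{L}^p$ for every $p > 1$. For (b), Lemma \ref{voLem1} says that $\re V = \tfrac{1}{2}E$, where $E$ is the projection onto the one-dimensional space of constant functions; this operator is manifestly positive and of rank one, hence finite-rank positive.

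For (c), Lemma \ref{voLem1} identifies $\Im V$, up to unitary equivalence, with the diagonal operator $D$ on $\ell^2(\mathbb{Z})$ whose eigenvalues are $\lambda_n = -1/((2n+1)\pi)$ for $n \in \mathbb{Z}$. Since $\lambda_n \neq 0$ for every $n$, one reads off $\ker B = \{0\}$. Moreover $\lambda_n > 0$ for every $n \leq -1$ and $\lambda_n < 0$ for every $n \geq 0$, so there are infinitely many eigenvalues of each sign; consequently the positive and negative parts $B_+$ and $B_-$ each have infinite rank, which is exactly the definition of \emph{essential} given at the start of Section \ref{S:vo}.

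Having verified (a), (b), and (c), Theorem \ref{voThm1} applies to $V$ and yields the corollary. Because the entire argument amounts to reading properties off of Lemma \ref{voLem1}, there is no substantive obstacle; the only minor point of care is to note that the infinitude of both $\{n \leq -1\}$ and $\{n \geq 0\}$ in $\mathbb{Z}$ is what forces $B$ to be essential rather than merely self-adjoint with infinite spectrum.
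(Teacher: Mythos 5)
Your verification is correct and matches the paper's (implicit) argument exactly: the paper states the corollary as an immediate consequence of Lemma \ref{voLem1}, and your check of irreducibility, compactness, the rank-one positive real part $\tfrac12 E$, and the essentialness and injectivity of the imaginary part via the sign pattern of the eigenvalues $-1/((2n+1)\pi)$, $n\in\mathbb Z$, is precisely the intended reading. Nothing is missing.
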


\section{Applications III: Positive maps on matrix algebras}\label{S:cr}

Let  $f:[a,b]\to\mathbb R$ be a continuous function defined on a compact real interval.  Notice 
that if $\phi: \mathcal A\to \mathcal B$ is a unit-preserving positive linear map of unital 
\cstar s and $A$ is a self adjoint operator in $\mathcal A$ with spectrum in $[a,b]$, 
then $\phi(A)$ is an operator in $\mathcal B$ with similar properties.  We will say that 
$f$ is {\em rigid} if for every UCP map of unital \cstar s $\phi: \mathcal A\to \mathcal B$ and every 
self adjoint operator $A\in \mathcal A$ with $\sigma(A)\subseteq [a,b]$, one has 
\begin{equation}\label{crEq1}
\phi(f(A))=f(\phi(A))\implies \phi(A^n)=\phi(A)^n, \quad \forall n=1,2,\dots.  
\end{equation}
The purpose of this section is to identify rigid functions in the following
sense.  We show that rigid functions must be either strictly convex or strictly concave 
in Proposition \ref{crProp3}.  In this commutative context, 
Conjecture \ref{ncbCon1} would imply the converse, namely that every strictly convex function is 
rigid.  While we are unable to prove that assertion, we do prove it for 
operators $A\in\mathcal B(H)$ and maps $\phi: \mathcal B(H)\to\mathcal B(K)$ 
when $\dim H<\infty$ in Theorem \ref{crThm1}.

Fix a real valued function  $f\in C[a,b]$ and 
let $u\in C[a,b]$ be the coordinate function 
$u(x)=x$, $x\in [a,b]$.  In order to determine whether $f$ is rigid, we must first  
identify the Choquet boundary of the function system generated by $f$ 
and the coordinate function $u(x)=x$, $x\in [a,b]$.

\begin{prop}\label{crProp1} Let $S\subseteq C[a,b]$ be the function system spanned by the three functions 
$f, u, \mathbf 1$ and let 
$$
\Gamma=\{(x,f(x): x\in [a,b]\}\subseteq \mathbb R^2 
$$ 
be the graph of $f$ and let $\co\Gamma\subseteq \mathbb R^2$ be its (necessarily compact) convex hull.  
The Choquet boundary of $S$ is the set of points $x\in [a,b]$ such that $(x,f(x))$ 
is an extreme point of $\co\Gamma$.  
\end{prop}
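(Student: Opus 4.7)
The plan is to reduce the statement to a standard finite-dimensional fact about barycenters, using the homeomorphism $\gamma: [a,b] \to \Gamma$ defined by $\gamma(x) = (x, f(x))$. Since $f$ is continuous and $\gamma$ has continuous inverse (the projection onto the first coordinate), pushforward by $\gamma$ gives a bijection between probability measures on $[a,b]$ and probability measures on $\Gamma$.

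First I would translate the defining condition. A probability measure $\mu$ on $[a,b]$ is a representing measure for $p$ relative to $S = \mathrm{span}\{\mathbf{1}, u, f\}$ exactly when $\int u\, d\mu = p$ and $\int f\, d\mu = f(p)$. Consequently $\tilde\mu := \gamma_*\mu$ is a probability measure on $\Gamma \subseteq \mathbb{R}^2$ with barycenter $(p, f(p))$, and conversely every probability measure on $\Gamma$ with barycenter $(p, f(p))$ arises this way. Hence $p \in \partial_S[a,b]$ iff the only probability measure on $\Gamma$ with barycenter $(p, f(p))$ is $\delta_{(p,f(p))}$.

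The remaining ingredient is the following planar (really, finite-dimensional) lemma: for a compact set $K \subseteq \mathbb{R}^n$ and a point $e \in \mathrm{conv}(K)$, the point $e$ is extreme in $\mathrm{conv}(K)$ if and only if every probability measure on $K$ with barycenter $e$ equals $\delta_e$. One direction is trivial: if $e = t b_1 + (1-t) b_2$ with $b_1 \neq b_2$ in $\mathrm{conv}(K)$ and $0 < t < 1$, represent $b_i$ by probability measures $\mu_i$ on $K$ (using, e.g., Carath\'eodory) and form $t\mu_1 + (1-t)\mu_2 \neq \delta_e$. For the other direction, suppose a probability measure $\mu$ on $K$ with barycenter $e$ satisfies $\mu \neq \delta_e$. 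Pick $x_0 \in \mathrm{supp}(\mu) \setminus \{e\}$ and a closed ball $\overline B$ around $x_0$ missing $e$; then $\alpha := \mu(B) \in (0,1)$ (it cannot be $1$, since otherwise $e$ would be a barycenter of a measure supported in the convex set $\overline B$), and splitting $\mu = \alpha \mu_1 + (1-\alpha)\mu_2$ by restriction to $B$ and its complement yields barycenters $b_1 \in \overline B$ (so $b_1 \neq e$) and $b_2 \in \mathrm{conv}(K)$ with $e = \alpha b_1 + (1-\alpha) b_2$, forcing $b_1 \neq b_2$ and showing $e$ is not extreme.

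Applying the lemma with $n = 2$, $K = \Gamma$, and $e = (p, f(p))$ finishes the proof. The convex hull $\mathrm{conv}(\Gamma)$ is compact because $\Gamma$ is compact in $\mathbb{R}^2$. The only real content is the lemma, and I expect no genuine obstacle: the whole argument is routine once one notices that representing measures for $S$ encode precisely barycentric representations on the graph of $f$.
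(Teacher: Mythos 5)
Your proof is correct and follows essentially the same route as the paper: you identify representing measures for $p$ with probability measures on the graph $\Gamma$ having barycenter $(p,f(p))$, and then invoke the standard characterization of extreme points of $\co\Gamma$ via uniqueness of barycentric representations. The only difference is that you supply a full proof of the convexity lemma that the paper dismisses as ``an exercise in elementary convexity theory,'' and your details (the Carath\'eodory step and the ball-splitting argument) are sound.
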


\begin{proof}
The proof, an exercise in elementary convexity theory, is a 
consequence of the following   
three observations.  First, the 
state space of $S$ is naturally identified with the space of all probability measures 
on the compact convex set $K=\co\Gamma$.  Second, the extreme points of $K$ are the points 
$k\in K$ for which the point mass $\delta_k$ is the unique probability measure on $K$ having $k$ as its  
barycenter; and such points must belong to $\Gamma$.  
Third, the Choquet boundary of $S$ is 
identified with the points of $\Gamma$ that have the extremal property of the preceding sentence.  
\end{proof}

Proposition \ref{crProp1} identifies the Choquet boundary $\partial_S[a,b]$ 
 of the function system $S\subseteq C[a,b]$ spanned by 
$f,u,\mathbf 1$.  
If one combines that with the following result, one identifies the 
functions $f$ for which $\partial_S[a,b]=[a,b]$ as precisely those which are 
either strictly convex or strictly concave.

\begin{prop}\label{crProp2}
For every continuous function $f: [a,b]\to \mathbb R$, the following 
are equivalent:
\begin{enumerate}
\item[(i)] Every point of the graph $\Gamma=\{(x,f(x)): x\in [a,b]\}$ of $f$ is 
an extreme point of the convex hull of $\Gamma$.  
\item[(ii)] $f$ is either strictly convex or strictly concave.  
\end{enumerate}
\end{prop}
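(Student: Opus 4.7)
The plan is to prove the two implications separately by direct convexity arguments, and the core difficulty is only in the direction (i)$\implies$(ii), which I would handle via an intermediate value argument.

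For $(ii)\Rightarrow(i)$, I would argue by contradiction using the strict form of Jensen's inequality. Suppose $f$ is strictly convex (the strictly concave case is symmetric) and that some point $(x_0,f(x_0))$ is not extreme in $\co\Gamma$. Since $\co\Gamma$ is the convex hull of the compact set $\Gamma$, non-extremeness yields a representation $(x_0,f(x_0))=\sum_{i=1}^n \lambda_i(x_i,f(x_i))$ as a convex combination of points of $\Gamma$ in which at least two of the $x_i$ are distinct (otherwise the two summands witnessing non-extremeness would coincide). Equating coordinates gives $x_0=\sum_i\lambda_i x_i$ and $f(x_0)=\sum_i\lambda_i f(x_i)$, but strict Jensen forces $f(\sum_i\lambda_i x_i)<\sum_i\lambda_i f(x_i)$ as soon as the underlying probability measure is supported on more than one point, a contradiction.

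For $(i)\Rightarrow(ii)$ I would argue the contrapositive: assuming $f$ is neither strictly convex nor strictly concave, I want to exhibit a non-extreme point of $\Gamma$. Introduce the continuous ``chord deficit''
$$
F(u_1,u_0,u_2)=f(u_0)-\frac{u_2-u_0}{u_2-u_1}f(u_1)-\frac{u_0-u_1}{u_2-u_1}f(u_2)
$$
on the convex (hence path-connected) open set $D=\{(u_1,u_0,u_2)\in[a,b]^3:u_1<u_0<u_2\}$. Writing $t=(u_2-u_0)/(u_2-u_1)\in(0,1)$ shows that strict convexity of $f$ is equivalent to $F<0$ throughout $D$ and strict concavity to $F>0$ throughout $D$. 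Hence the hypothesis supplies points $p,q\in D$ with $F(p)\ge 0$ and $F(q)\le 0$; if either inequality is an equality, that point is already a zero of $F$, and otherwise the intermediate value theorem along a continuous path in $D$ from $p$ to $q$ produces a triple $(u_1^*,u_0^*,u_2^*)\in D$ at which $F=0$. At such a zero, $(u_0^*,f(u_0^*))$ is a nontrivial convex combination of the two distinct points $(u_1^*,f(u_1^*))$ and $(u_2^*,f(u_2^*))$ of $\Gamma$, hence fails to be extreme in $\co\Gamma$.

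There is no serious obstacle: the translation between the pointwise definition of strict convexity/concavity and the sign of $F$ is a routine substitution, and $D$ is manifestly convex. The only point demanding mild care is the degenerate sub-case in which ``not strictly convex'' (or ``not strictly concave'') is already witnessed by an equality rather than a strict reverse inequality, since there a zero of $F$ is produced directly and the intermediate value step is unnecessary.
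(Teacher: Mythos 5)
Your proof is correct and follows essentially the same route as the paper: the nontrivial direction rests on an intermediate value argument that converts the coexistence of a strict-convexity violation and a strict-concavity violation into an exact chord equality, which is then read off as a failure of extremeness. The only (minor) difference is that you run the IVT over the connected set of triples $u_1<u_0<u_2$ and land directly on a two-point witness of non-extremeness, whereas the paper interpolates between two fixed two-point configurations and therefore needs a preliminary claim about $n$-point convex combinations; your strict-Jensen treatment of (ii)$\Rightarrow$(i) simply fills in a step the paper dismisses as straightforward.
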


\begin{proof} (i)$\implies$(ii):  Assuming that (i) holds, we claim first that if $x_1, \dots, x_n$
are points of $I=[a,b]$ 
and $t_1,\dots, t_n$ are positive numbers with sum $1$, then 
\begin{equation}\label{oldcrEq2}
f(t_1x_1+\cdots+t_nx_n)=t_1f(x_1)+\cdots+t_nf(x_n)\implies x_1=\cdots=x_n.  
\end{equation}
Indeed, the left side of the implication implies that for $x_0=t_1x_1+\cdots+t_nx_n$, 
$$
(x_0,f(x_0))=t_1\cdot(x_1,f(x_1))+\cdots+t_n\cdot(x_n,f(x_n))
$$
which by (i) implies $x_1=\cdots=x_n=x_0$.  Next, we claim that for 
any two pairs of distinct points $x\neq y$, $u\neq v$ in $I$ and $0<s,t<1$, 
the two inequalities 
\begin{align*}
f(sx+(1-s)y)&<sf(x)+(1-s)f(y)
\\
f(tu+(1-t)v)&>tf(u)+(1-t)f(v)
\end{align*} 
cannot both hold.  For if they do, then the function $F: [0,1]\to \mathbb R$ 
\begin{align*}
F(\lambda)=&f(\lambda(sx+(1-s)y))+(1-\lambda)(tu+(1-t)v)) 
\\
&-\lambda(sf(x)+(1-s)f(y)) - (1-\lambda)(tf(u)+(1-t)f(v))
\end{align*}
is continuous, positive at $\lambda=0$ and negative at $\lambda=1$, so by the intermediate 
value theorem, there is a $\lambda\in (0,1)$ for which $F(\lambda)=0$, which 
contradicts 
(\ref{oldcrEq2}) for $x_1=x, x_2=y, x_3=u, x_4=v$.  Hence one or the other inequalities 
must be satisfied throughout, so that $f$ is either strictly convex or strictly concave.  
The proof of (ii)$\implies$(i) is straightforward.  
\end{proof}

\begin{prop}\label{crProp3}
A rigid function $f\in C[a,b]$ is either strictly convex or strictly concave.  
\end{prop}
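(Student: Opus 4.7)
The plan is to argue the contrapositive: assume $f$ is neither strictly convex nor strictly concave, and exhibit a unital positive (hence UCP) linear map $\phi$ and a self-adjoint operator $A$ for which $\phi(f(A)) = f(\phi(A))$ but $\phi(A^2) \neq \phi(A)^2$, thus showing $f$ is not rigid.

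By Proposition \ref{crProp2}, the assumption that $f$ is neither strictly convex nor strictly concave means that some point $(x_0, f(x_0))$ of the graph $\Gamma$ fails to be an extreme point of $\co\Gamma$. By Proposition \ref{crProp1} applied to the function system $S = \operatorname{span}\{\mathbf 1, u, f\} \subseteq C[a,b]$ (where $u(x)=x$), this means $x_0$ is not in the Choquet boundary of $S$. Hence there exists a (Borel) probability measure $\mu$ on $[a,b]$, different from the point mass $\delta_{x_0}$, that represents $x_0$ in the sense that
$$
\int_{[a,b]} g(x)\,d\mu(x) = g(x_0), \qquad g \in S.
$$
In particular $\int u\,d\mu = x_0$ and $\int f\,d\mu = f(x_0)$.

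Now take $\mathcal A = C[a,b]$, $\mathcal B = \mathbb C$, and define $\phi: \mathcal A \to \mathcal B$ by $\phi(g) = \int g\,d\mu$. Since $\phi$ is a state, it is unital and completely positive (codomain commutative). Let $A = u \in \mathcal A$, which is self-adjoint with $\sigma(A) = [a,b]$. Then $\phi(A) = x_0$ and $\phi(f(A)) = \int f\,d\mu = f(x_0) = f(\phi(A))$, so the left-hand side of the rigidity implication is satisfied. On the other hand,
$$
\phi(A^2) - \phi(A)^2 \;=\; \int x^2\,d\mu(x) - x_0^2 \;=\; \int (x - x_0)^2\,d\mu(x),
$$
which is strictly positive because $\mu \neq \delta_{x_0}$ and $\int u\,d\mu = x_0$ force $\mu$ to have strictly positive variance about $x_0$. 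Hence $\phi(A^2) \neq \phi(A)^2$, contradicting the rigidity of $f$.

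The only nontrivial input is the existence of the measure $\mu$, which has already been packaged for us by Propositions \ref{crProp1} and \ref{crProp2}; after that the argument is a one-line Jensen/variance computation. No substantive obstacle remains — the construction is explicit, the map is manifestly UCP, and the failure of the conclusion of rigidity is witnessed by the second moment alone.
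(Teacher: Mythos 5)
Your proof is correct. The contrapositive strategy, the appeal to Proposition \ref{crProp2} to produce a non-extreme graph point $(x_0,f(x_0))$, and the exploitation of the resulting nontrivial representing measure are exactly the engine of the paper's argument as well. Where you diverge is in how the counterexample is realized: you take $\mathcal A=C[a,b]$, $\mathcal B=\mathbb C$, and let $\phi$ be the state of integration against $\mu$, detecting the failure of multiplicativity with the second moment $\int (x-x_0)^2\,d\mu>0$. The paper instead writes $(x_0,f(x_0))$ as a finite convex combination $\sum_k t_k(x_k,f(x_k))$ of at most six other graph points and builds a unital positive map of the $(n+1)\times(n+1)$ diagonal algebra, extended to a UCP map $\phi:M_{n+1}\to M_{n+1}$, with $A=\mathrm{diag}(x_0,x_1,\dots,x_n)$ fixed by $\phi$. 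Your version is shorter and entirely measure-theoretic, and it suffices for Proposition \ref{crProp3} as stated, since the definition of rigidity quantifies over all unital $C^*$-algebras. What the paper's more elaborate construction buys is the stronger conclusion that the counterexample lives on a matrix algebra of dimension at most $7$ with $\phi(A)=A$; that refinement is what feeds the second paragraph of Theorem \ref{crThm1} and the implication (i)$\implies$(ii) of Theorem \ref{inThm4}, neither of which would follow from a counterexample on $C[a,b]$. So your argument proves the proposition but not the quantitative matrix-algebra strengthening the paper extracts from the same idea.
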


\begin{proof}[Proof of Proposition \ref{crProp3}] 
 We actually prove a somewhat stronger version of Proposition \ref{crProp3} 
in its contrapositive formulation: {\em  
If $f$ is neither strictly convex nor strictly concave, then there is a finite dimensional 
Hilbert space $H$, a self adjoint operator $A\in\mathcal B(H)$ with spectrum in $[a,b]$, and a 
unital completely positive map 
$\phi: \mathcal B(H)\to\mathcal B(H)$ such that $\phi(A)=A$, $\phi(f(A))=f(\phi(A))$, 
but $\phi$ is not multiplicative on the algebra of polynomials in $A$.}  In particular, 
$f$ is not rigid.

Indeed, let $f:[a,b]\to \mathbb R$ be a continuous function that 
is neither strictly convex nor strictly concave.  By Proposition \ref{crProp2}, 
the graph $\Gamma$ of $f$ must contain some point $(x_0,f(x_0))$ that is not an extreme point 
of its convex hull, and hence can be written as a nontrivial convex combination 
of two distinct points of the convex hull of $\Gamma$.  Since $\Gamma\subseteq \mathbb R^2$, 
every point of the convex 
hull of $\Gamma$ is a convex combination of at most $3$ points of $\Gamma$, 
and we conclude that $(x_0, f(x_0))$ 
can be written as a convex combination of at most $6$ points of $\Gamma$, 
$(x_i, f(x_i))$, $x_i\neq x_0$, $i=1,\dots, n\leq 6$.  
By discarding some of the points $x_1,\dots, x_n$ and 
reducing $n$ if necessary, 
we can assume that the $n+1$ points $x_0, x_1, \dots, x_n\in [a,b]$ are distinct.  
By the choice of $x_1, \dots, x_n$, there are numbers $t_1, \dots, t_n\in[0,1]$ such that 
\begin{equation}\label{crEq7}
x_0=\sum_{k=1}^n t_kx_k , \quad {\rm{and }}\quad  f(x_0)=\sum_{k=1}^nt_k\cdot f(x_k),   
\end{equation}
and consider the positive linear map $\phi:\mathbb C^{n+1}\to\mathbb C^{n+1}$ defined by 
$$
\phi(\lambda_0, \lambda_1,\dots, \lambda_n)=(t_1\lambda_1+\cdots+t_n\lambda_n, \lambda_1, \dots, \lambda_n), 
\qquad \lambda_k\in \mathbb C.  
$$
Viewing $\mathbb C^{n+1}$ as the algebra of all diagonal matrices in $M_{n+1}=M_{n+1}(\mathbb C)$, 
$\phi$ becomes a unit-preserving positive (and hence completely positive) linear map.  
We may extend $\phi$ to a UCP map  $\tilde\phi: M_{n+1}\to M_{n+1}$ in many ways, for example, by composing 
it with the trace-preserving conditional expectation of $M_{n+1}$ 
onto the diaganal subalgebra.  In order to conserve notation, we 
continue to write $\phi$ for an extension of the original 
map on diagonal matrices to a completely positive map of $M_{n+1}$ into itself.  

Consider the diagonal operator 
$$
A=(x_0, x_1,\dots, x_n).  
$$
The conditions (\ref{crEq7}) on $t_1,\dots,t_n$ imply the two operator formulas 
$$
\phi(A)=A, \qquad \phi(f(A))=f(A).  
$$
Finally, since $x_i\neq x_j$ for $i\neq j$, the algebra generated by $A$ is all diagonal 
sequences in $\mathbb C^{n+1}$, and obviously $\phi$ does not 
fix all diagonal sequences.  Together, these properties imply that 
the restriction of $\phi$ to the algebra of 
polynomials in $A$ is not multiplicative.   
\end{proof}

\begin{thm}\label{crThm1} Let $[a,b]$ be a compact real interval and 
let $f\in C[a,b]$ be real valued.  If $f$ is strictly convex, then for every 
pair  $H$, $K$  of finite dimensional 
Hilbert spaces, every self adjoint operator $A\in\mathcal B(H)$ having 
spectrum in $[a,b]$,  and every UCP map $\phi: \mathcal B(H)\to \mathcal B(K)$ satisfying 
\begin{equation}\label{oldcrEq1}
\phi(f(A))=f(\phi(A)),   
\end{equation}
the restriction of $\phi$ to the algebra of polynomials in $A$ 
is multiplicative.  

Conversely, if neither $f$ nor $-f$ is strictly convex, then 
there is a Hilbert space $H$ of dimension at most $7$, a self adjoint operator 
$A\in\mathcal B(H)$ with spectrum in $[a,b]$, and a UCP map $\phi: \mathcal B(H)\to \mathcal B(H)$ 
such that 
$$
\phi(A)=A, \quad \phi(f(A))=f(A), 
$$
and which is not multiplicative on the algebra of polynomials in $A$.  
\end{thm}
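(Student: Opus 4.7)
The plan is to prove the two implications with different tools: the converse is essentially a restatement of Proposition \ref{crProp3}, while the forward direction reduces --- through finiteness of $\dim H$ --- to an application of Theorem \ref{csThm1} on \cstar s with countable spectrum.

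For the converse, if neither $f$ nor $-f$ is strictly convex, Proposition \ref{crProp2} produces a point $(x_0, f(x_0))$ that is not extremal in $\co\{(x,f(x)) : x\in [a,b]\}$. The construction carried out inside the proof of Proposition \ref{crProp3} already yields, from such an $x_0$, a self adjoint diagonal operator $A$ on $\mathbb C^{n+1}$ with $n\leq 6$, together with a UCP map $\phi: M_{n+1}\to M_{n+1}$ satisfying $\phi(A)=A$, $\phi(f(A))=f(A)$, and failing multiplicativity on polynomials in $A$.  Since $\dim \mathbb C^{n+1}\leq 7$, this is the desired counterexample and nothing further is needed.

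For the forward direction, fix strictly convex $f$, finite dimensional $H$ and $K$, a self adjoint $A\in\mathcal B(H)$ with $\sigma(A)\subseteq [a,b]$, and a UCP map $\phi:\mathcal B(H)\to\mathcal B(K)$ with $\phi(f(A))=f(\phi(A))$.  Set $\mathcal A = C^*(\mathbf 1, A)\subseteq \mathcal B(H)$, a finite dimensional commutative \cstar\ isomorphic to $C(\sigma(A))$ with $\sigma(A)$ a finite subset of $[a,b]$, and let $S_A = \spn\{\mathbf 1, A, f(A)\}\subseteq \mathcal A$.  Define $\psi:\mathcal A\to\mathcal B(K)$ via the functional calculus by $\psi(g(A))=g(\phi(A))$; this is a unital (hence nondegenerate) $*$-representation that agrees with $\phi$ on each of the three generators $\mathbf 1, A, f(A)$ of $S_A$, so $\phi|_{S_A}=\psi|_{S_A}$.

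It therefore suffices to show that $S_A$ is hyperrigid in $\mathcal A$: by Theorem \ref{mrThm1}(iii), $\psi|_{S_A}$ will have the unique extension property, forcing the UCP extension $\phi|_{\mathcal A}$ of $\psi|_{S_A}$ to coincide with $\psi$ on all of $\mathcal A$, which is exactly the multiplicativity of $\phi$ on polynomials in $A$.  Since $\hat{\mathcal A}\cong \sigma(A)$ is finite, Theorem \ref{csThm1} reduces the hyperrigidity of $S_A$ to showing that each point evaluation $\lambda\in\sigma(A)$ is a boundary representation for $S_A$; viewing $S_A$ inside $C(\sigma(A))$ and invoking Proposition \ref{crProp1}, this further reduces to showing that each $(\lambda, f(\lambda))$ is an extreme point of $\co\{(\mu, f(\mu)) : \mu\in\sigma(A)\}$.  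This is immediate from strict convexity: after collecting any occurrence of $\lambda$ into a single term and solving for $\lambda$, a nontrivial convex representation would yield distinct $\mu_i\in\sigma(A)\setminus\{\lambda\}$ and weights $s_i>0$ with $\sum s_i=1$ for which $\lambda = \sum_i s_i\mu_i$ and $f(\lambda)=\sum_i s_i f(\mu_i)$, contradicting $f(\sum_i s_i\mu_i)<\sum_i s_i f(\mu_i)$.  The only delicate point in the plan is conceptual rather than technical: one must recognize that the finiteness of $\sigma(A)$ is precisely the hypothesis needed to invoke Theorem \ref{csThm1} and thereby sidestep the still-open commutative case of Conjecture \ref{ncbCon1} that blocks the analogous argument in the general operator-algebraic setting alluded to in Remark \ref{crRem1}.
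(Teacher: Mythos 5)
Your treatment of the converse is exactly the paper's: it cites the construction in the proof of Proposition \ref{crProp3}, and that part is fine. The forward direction, however, has a genuine gap at its very first step, and it occurs precisely where the hypotheses $\phi(f(A))=f(\phi(A))$ and strict convexity must do their work. You define $\psi:\mathcal A\to\mathcal B(K)$ by $\psi(g(A))=g(\phi(A))$, but this prescription is well defined only if $\sigma(\phi(A))\subseteq\sigma(A)$: if $g_1=g_2$ on $\sigma(A)$ but $g_1\neq g_2$ somewhere on $\sigma(\phi(A))$, then $g_1(A)=g_2(A)$ while $g_1(\phi(A))\neq g_2(\phi(A))$. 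That inclusion is false for general UCP maps (take $A=\diag(0,1)$ and $\phi$ the normalized trace, so that $\phi(A)=\frac{1}{2}\notin\{0,1\}$); unitality and positivity only guarantee $\sigma(\phi(A))\subseteq[a,b]$. The inclusion does hold under your hypotheses --- for a unit eigenvector $\xi$ of $\phi(A)$ with eigenvalue $\lambda$, the state $g\mapsto\langle\phi(g(A))\xi,\xi\rangle$ is a probability measure $\mu$ on the finite set $\sigma(A)$ with barycenter $\lambda$ and with $\int f\,d\mu=f(\lambda)$, so strict Jensen forces $\mu=\delta_\lambda$ and hence $\lambda\in\sigma(A)$ --- but no such argument appears in your write-up, and without it $\psi$ does not exist and nothing downstream (Theorem \ref{csThm1}, the unique extension property of $\psi|_{S_A}$) can be invoked.

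This is also exactly where your route diverges from the paper's, and why the paper's choice of ambient algebra is the safer one. Arveson works with representations of $C[a,b]$ rather than of $C(\sigma(A))$: he sets $\pi(g)=g(A)$ and $\sigma(g)=g(\phi(A))$, both automatically well defined on $C[a,b]$, observes that $\phi\circ\pi$ and $\sigma$ agree on $S=\spn\{\mathbf 1,u,f\}$, and then uses that \emph{every} point of $[a,b]$ --- not merely every point of $\sigma(A)$ --- lies in the Choquet boundary of $S$ (Propositions \ref{crProp1} and \ref{crProp2}), so that $\sigma$, a finite direct sum of point evaluations at the a priori unknown eigenvalues of $\phi(A)$, restricts to a map with the unique extension property by Proposition \ref{ncbProp1}. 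No appeal to Theorem \ref{csThm1}, to hyperrigidity of $S_A$ inside $C^*(A)$, or to the location of $\sigma(\phi(A))$ is needed. If you insert the Jensen-type lemma above, your argument closes (and in fact yields the slightly stronger statement that $\{\mathbf 1,A,f(A)\}$ is hyperrigid in $C^*(A)$ when $A$ has finite spectrum, as anticipated in the remark following Theorem \ref{saThm1}); as written, the proof is incomplete.
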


\begin{proof}  To prove the first paragraph, let 
$\phi: \mathcal B(H)\to \mathcal B(K)$ be a UCP map, let $A=A^*\in\mathcal B(H)$ have its  
spectrum in $I$ and satisfy 
(\ref{oldcrEq2}), and assume that $f$ is strictly convex.  Let $B=\phi(A)\in \mathcal B(K)$.  
Consider the representations 
$\pi: C[a,b]\to \mathcal B(H)$ and  $\sigma: C[a,b]\to \mathcal B(K)$ defined by 
$$
\pi(g)=g(A), \quad \sigma(g)=g(B),  \qquad g\in C(X).    
$$
Let $u(x)=x$, $x\in X$, be the coordinate function and let $S$ be the $2$ or $3$ dimensional 
function system spanned by $u, f$ and the constants.   We have arranged that 
$\phi(\pi(u))=\sigma(u)$, and $\phi(\pi(f))=\sigma(f)$, hence  
\begin{equation}\label{oldcrEq3}
\phi\circ\pi\restriction_{S}=\sigma\restriction_{S}.  
\end{equation}

Since $K$ is finite dimensional, $\sigma$ is a finite direct sum of (one dimensional) irreducible 
representations of $C[a,b]$, and such representations correspond to points of $[a,b]$.  Since $f$ is 
assumed to be strictly convex, Proposition 
\ref{crProp2} implies that every point of the graph $\Gamma$ of $f$ is an extreme pont of the 
convex hull of $\Gamma$; and Proposition \ref{crProp1} implies that every point of $[a,b]$ 
belongs to the Choquet boundary of $[a,b]$ relative to $S$.  Hence $\sigma$ is a direct 
sum of one dimensional representations with the unique extension property.  By Proposition 
\ref{ncbProp1}, $\sigma$ itself has the unique extension property; and since 
$\phi\circ\pi$ restricts to $\sigma$ on $S$, it follows that $\phi\circ\pi=\sigma$.  
Hence the restriction of $\phi$ to $\pi(C[a,b]))$ is multiplicative.  

The assertion of the second paragraph follows from the proof of Proposition \ref{crProp2}.  Indeed, 
the construction in that proof exhibits a Hilbert space $H$ of dimension at most 7, an operator 
$A=A^*\in\mathcal B(H)$ and a unital completely positive map $\phi: \mathcal B(H)\to \mathcal B(H)$ 
with the stated properties.  
\end{proof}

\begin{rem}[Infinite dimensional generalizations]\label{crRem1} Naturally, one would hope that 
the second paragraph of Theorem \ref{crThm1} remains true if one drops the 
hypothesis of finite dimensionality of $H$; but 
that has not been proved.  Note that it would be enough to prove 
Conjecture \ref{ncbCon1} for commutative \cstar s.  In turn, that would 
provide a generalization of Theorem \ref{saThm1} 
to cases in which $G=\{\mathbf 1, x,x^2\}$ 
is replaced with $G=\{\mathbf 1, x, f(x)\}$ for any continuous 
strictly convex function $f$ and any self adjoint operator $x$.    
\end{rem}

\part{A local version of Conjecture \ref{ncbCon1} }\label{P:local}

It is conceivable that Conjecture 
\ref{ncbCon1} might fail for reasons yet unknown; and in that event one needs to know what {\em can} be 
proved.  
In the remaining sections we take up this issue in the commutative case 
of function systems $S\subseteq C(X)$, where $X$ is a compact metric space, and 
we show that function systems satisfy a ``localized" version 
of Conjecture \ref{ncbCon1}.

More precisely,  let 
$S\subseteq C(X)$ be a linear space of continuous functions that separates 
points, contains the constants, is closed under complex conjugation, and assume that every 
point $p\in X$ has a unique representing measure in the sense that the only probability 
measure $\mu$ on $X$ satisfying 
$$
f(p)=\int_X f\,d\mu, \qquad f\in S  
$$
is the point mass $\mu=\delta_p$.
By Theorem \ref{mrThm1}, to prove Conjecture \ref{ncbCon1} 
it is enough to prove the following assertion:  For every separably-acting representation 
$\pi: C(X)\to\mathcal B(H)$ and every positive 
linear map $\phi: C(X)\to \mathcal B(H)$ such that $\phi\restriction_S=\pi\restriction_S$, 
one has 
\begin{equation}\label{PlocalEq1}
\phi(f) = \pi(f),\qquad f\in C(X).  
\end{equation}
Let  $E$ be the spectral measure of $\pi$ --  namely the projection valued measure 
on the $\sigma$-algebra of Borel subsets of $X$ that satisfies  
$$
\pi(f)=\int_X f(x)\,dE(x),\qquad f\in C(X).  
$$
We 
will show that (\ref{PlocalEq1}) is true {\em locally} in the following sense:   
For every 
positive linear map $\phi: C(X)\to \mathcal B(H)$ that restricts 
to $\pi$ on $S$ and for every point $p\in X$,  
\begin{equation}\label{pLocalEq2}
\lim_{\epsilon\to 0}\|(\phi(f)-\pi(f))E(B_\epsilon(p))\|=0, \qquad f\in C(X),   
\end{equation}
where $B_\epsilon(p)=\{x\in X: d(x,p)\leq \epsilon\}$ is the ball of radius $\epsilon>0$ 
about $p$.   Indeed, the limit (\ref{pLocalEq2}) is zero uniformly in $p$ (see  Theorem \ref{ueThm1}).  

\section{The local \cstar\ of a representation of $C(X)$}\label{S:lo}

Throughout this section, $X$ will denote a compact metric space with 
metric $d:X\times X\to [0,\infty)$.    Every 
representation 
$\pi: C(X)\to\mathcal B(H)$ gives rise to a spectral measure $F\to E(F)$ 
on the Borel subsets $F\subseteq X$, and which is uniquely defined 
by
$$
\langle\pi(f)\xi,\xi\rangle =\int_X f(x)\langle E(dx)\xi,\xi\rangle, \qquad \xi\in H, \quad f\in C(X).  
$$
We say that $\pi:C(X)\to\mathcal B(H)$ is a {\em separable} representation if the space $H$ on which it acts 
is a separable Hilbert space.  
All representations $\pi$ are assumed to be nondegenerate, so that $\pi(\mathbf 1)=\mathbf 1$.

Let $\pi: C(X)\to\mathcal B(H)$ be a representation and let $p\in X$.  An 
operator $A\in\mathcal B(H)$ is said to be {\em locally null at $p$} if for every 
$\epsilon>0$ there is an open neighborhood $U$ of $p$ such that 
$\|AE(U)\|\leq \epsilon$ and $\|A^*E(U)\|\leq \epsilon$.

\begin{prop}\label{loProp1}
Let $\pi: C(X)\to \mathcal B(H)$ be a representation.  Then for every operator 
$A\in\mathcal B(H)$ the following are equivalent:
\begin{enumerate}
\item[(i)] $A$ is locally null at every point of $X$.  
\item[(ii)] $A$ is uniformly locally null in the following sense: Letting 
$B_\delta(p)=\{q\in X: d(p,q)<\epsilon\}$ be the $\delta$-ball about a point $p\in X$, we have 
\begin{equation}\label{loEq1}
\sup_{p\in X}(\|AE(B_\delta(p))\|+\|A^*E(B_\delta(p))\|)\to 0 \quad {\rm{as\ }} \delta\to 0+.  
\end{equation}
\end{enumerate}
\end{prop}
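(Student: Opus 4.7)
The implication (ii)$\implies$(i) is essentially immediate: given $p\in X$ and $\epsilon>0$, pick $\delta$ small enough that the supremum in (\ref{loEq1}) is below $\epsilon$; then $U=B_\delta(p)$ is an open neighborhood of $p$ witnessing local nullity at $p$.

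For the main direction (i)$\implies$(ii), the plan is to leverage compactness of $X$ through the Lebesgue number lemma. Fix $\epsilon>0$. By hypothesis (i), for every $p\in X$ there is an open neighborhood $U_p\subseteq X$ such that
$$
\|AE(U_p)\|\leq \epsilon/2 \quad\text{and}\quad \|A^*E(U_p)\|\leq \epsilon/2.
$$
The family $\{U_p:p\in X\}$ is an open cover of the compact metric space $X$, so by the Lebesgue number lemma there exists $\delta_0>0$ such that for every $0<\delta\leq \delta_0$ and every $q\in X$, the ball $B_\delta(q)$ is contained in some $U_{p(q)}$.

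The key monotonicity observation is that for Borel sets $F\subseteq G\subseteq X$ one has $E(F)\leq E(G)$ as projections, hence $E(G)E(F)=E(F)$, which gives $\|AE(F)\|\leq \|AE(G)\|$ (and similarly for $A^*$). Applying this with $F=B_\delta(q)$ and $G=U_{p(q)}$ yields
$$
\|AE(B_\delta(q))\|+\|A^*E(B_\delta(q))\|\leq \|AE(U_{p(q)})\|+\|A^*E(U_{p(q)})\|\leq \epsilon,
$$
uniformly in $q\in X$, and hence the supremum in (\ref{loEq1}) is at most $\epsilon$ for every $\delta\leq \delta_0$. Since $\epsilon>0$ was arbitrary, this establishes (ii).

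The only step that requires any care is the spectral monotonicity $\|AE(F)\|\leq \|AE(G)\|$ when $F\subseteq G$, and this is a routine property of spectral measures. I do not anticipate any genuine obstacle: compactness of $X$ together with the Lebesgue number lemma does the entire job of promoting a pointwise neighborhood selection to a uniform radius.
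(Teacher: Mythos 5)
Your proof is correct. The paper establishes (i)$\implies$(ii) by contradiction using sequential compactness: if the supremum did not tend to zero, one could choose $\delta_n\to 0$ and points $p_n$ with $\|AE(B_{\delta_n}(p_n))\|\ge\alpha>0$, pass to a convergent subsequence $p_n\to p$, observe that $B_\delta(p)\supseteq B_{\delta_n}(p_n)$ for large $n$, and conclude via the same inclusion-monotonicity you invoke that $\inf_{\delta>0}\|AE(B_\delta(p))\|\ge\alpha$, contradicting local nullity at $p$. Your direct argument via the Lebesgue number lemma arrives at an explicit uniform radius $\delta_0$ instead of refuting its nonexistence, and it carries the $A$ and $A^*$ terms together, whereas the paper reduces to the statement for $A$ alone and tacitly applies it to $A^*$ as well. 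Both routes rest on exactly the same two ingredients --- compactness of the metric space $X$ and the monotonicity $\|AE(F)\|\le\|AE(G)\|$ for Borel sets $F\subseteq G$ (which you correctly justify from $E(G)E(F)=E(F)$) --- so the difference is one of packaging rather than substance; your version is arguably slightly cleaner and more constructive.
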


\begin{proof}(i) $\implies$ (ii):  It suffices to show that for every operator 
$A\in\mathcal B(H)$, 
\begin{equation}\label{loEq1.1}
\lim_{\delta\to0}\|AE(B_\delta(p))\|=0\ \forall p\in X\implies 
\lim_{\delta\to0} \sup_{p\in X}\|AE(B_\delta(p))\|=0.  
\end{equation}
Contrapositively, 
let $\delta_n>0$ be a sequence tending to $0$ such that 
\begin{equation}\label{loEq2}
\|A E(B_\delta(p_n))\|\geq \alpha>0, \qquad n=1,2,\dots.  
\end{equation}
By compactness, $\{p_n\}$ has a convergent subsequence,  and by passing to 
that subsequence we may assume that $p_n\to p\in X$ as $n\to \infty$.  For every 
$\delta>0$ we will have $B_\delta(p)\supseteq B_{\delta_n}(p_n)$ for sufficiently 
large $n$, and for such $n$, (\ref{loEq2}) implies 
$$
\|AE(B_\delta(p))\|\geq \|A E(B_\delta(p_n))\|
\geq \alpha, 
$$
from wich we conclude 
$$
\inf_{\delta>0}\|A E(B_\delta(p))\|\geq \alpha, 
$$
contradicting item (i) at the point $p$.   
(ii) $\implies$ (i) is trivial.  
\end{proof}

\begin{defn}
Let $\pi: C(X)\to \mathcal B(H)$ be a representation.  An operator $A\in\mathcal B(H)$ 
is said to be {\em locally null} (relative to $\pi$) if it satisfies the equivalent conditions of 
Proposition \ref{loProp1}.  $\mathcal N_\pi$ will denote the set of all operators 
that are locally null with respect to $\pi$.  
\end{defn}

\begin{rem}[Structure of $\mathcal N_\pi$]\label{loRem1}
Consider the linear space of operators 
$$
\mathcal L_\pi=\{A\in\mathcal B(H): \lim_{\delta\to0}\|AE(B_\delta(p))\|=0, \quad \forall p\in X\}.  
$$
Obviously, $\mathcal L_\pi$ is a norm-closed left ideal in $\mathcal B(H)$ for which 
$\mathcal N_\pi=\mathcal L_\pi\cap\mathcal L_\pi^*$.  
Moreover, the norm-closed linear span of $\mathcal L_\pi\cdot\mathcal L_\pi^*$ 
is a two-sided ideal in $\mathcal B(H)$,  which when nonzero can only be the $C^*$-algebra 
$\mathcal K$ of all compact operators on $H$ or all of 
$\mathcal B(H)$.  
We conclude 
that {\em either a) $\mathcal N_\pi=\{0\}$,  or b) $\mathcal N_\pi=\mathcal K$, 
or c) $\mathcal N_\pi$ contains $\mathcal K$ together with some 
noncompact operators, in which case it is strongly Morita equivalent to $\mathcal B(H)$. } 
\end{rem}

\begin{prop}\label{loProp2}    If $\pi: C(X)\to\mathcal B(H)$ is a separable representation with  
no point spectrum, then $\mathcal N_\pi$ contains the \cstar\ $\mathcal K$ of 
compact operators.  
\end{prop}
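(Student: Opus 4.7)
The plan is to show that every rank-one operator on $H$ is locally null, and then appeal to the fact, recorded in Remark \ref{loRem1}, that $\mathcal N_\pi$ is norm-closed and self-adjoint; since finite linear combinations of rank-one operators are norm-dense in $\mathcal K$, this will give $\mathcal K\subseteq \mathcal N_\pi$. Throughout, I will interpret the hypothesis that $\pi$ has no point spectrum as the statement that the spectral measure $E$ of $\pi$ is nonatomic in the sense that $E(\{p\})=0$ for every $p\in X$.

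First I would fix $\xi\in H$ and consider the finite positive Borel measure on $X$ defined by
$$
\mu_\xi(F)=\langle E(F)\xi,\xi\rangle, \qquad F\subseteq X \text{ Borel}.
$$
Since $X$ is a compact metric space, the singleton $\{p\}$ equals the decreasing intersection $\bigcap_{n}B_{1/n}(p)$, so continuity of $\mu_\xi$ from above gives
$$
\mu_\xi(B_\delta(p))\longrightarrow \mu_\xi(\{p\})=\langle E(\{p\})\xi,\xi\rangle=0
$$
as $\delta\to 0$, using the hypothesis that $E$ has no atoms. Since $E(B_\delta(p))$ is a projection, $\|E(B_\delta(p))\xi\|^{2}=\mu_\xi(B_\delta(p))$, so $\|E(B_\delta(p))\xi\|\to 0$ as $\delta\to 0$ for every $\xi\in H$ and every $p\in X$.

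Next I would apply this to rank-one operators. Let $\xi,\eta\in H$ and let $T\in\mathcal B(H)$ be the rank-one operator $T\zeta=\langle\zeta,\xi\rangle\eta$. For each Borel set $F\subseteq X$, the operator $TE(F)$ is again rank-one, sending $\zeta\mapsto \langle E(F)\zeta,\xi\rangle\eta=\langle \zeta,E(F)\xi\rangle\eta$, and hence
$$
\|TE(F)\|=\|\eta\|\cdot\|E(F)\xi\|.
$$
Applied to $F=B_\delta(p)$, the estimate of the previous paragraph shows $\|TE(B_\delta(p))\|\to 0$ as $\delta\to 0$. The same argument applied to $T^{*}$ (which is another rank-one operator) gives $\|T^{*}E(B_\delta(p))\|\to 0$. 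Thus $T$ satisfies condition (i) of Proposition \ref{loProp1} at every point $p\in X$, so $T\in\mathcal N_\pi$.

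Finally, $\mathcal N_\pi$ is norm-closed and self-adjoint by Remark \ref{loRem1} (it is the self-adjoint part of the norm-closed left ideal $\mathcal L_\pi$). The linear span of the rank-one operators is norm-dense in $\mathcal K$, so $\mathcal K\subseteq\mathcal N_\pi$, as required. There is no serious obstacle here; the only point that needs care is the reduction from pointwise local nullness at every $p$ to the uniform local nullness built into the definition of $\mathcal N_\pi$, but that reduction is exactly the content of Proposition \ref{loProp1}, which is already available.
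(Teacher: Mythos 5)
Your proof is correct and follows essentially the same route as the paper's: both reduce to showing $\|E(B_\delta(p))\xi\|^2=\langle E(B_\delta(p))\xi,\xi\rangle\to 0$ because the vector measures $\langle E(\cdot)\xi,\xi\rangle$ are nonatomic, and then pass to all of $\mathcal K$ by density. The only cosmetic difference is that the paper works with rank-one projections and self-adjoint compact operators while you handle general rank-one operators directly; both versions are fine.
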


\begin{proof} We claim 
first that $\mathcal N_\pi$ contains every rank one projection $A\in\mathcal K$.  Indeed, let 
$A\xi=\langle\xi,f\rangle f$, where $f$ is a unit 
vector in $H$.  
Then for every $p\in X$ and $\delta>0$, $AE(B_\delta(p))$ is a rank one operator with 
$$
\|AE(B_\delta(p))\|^2=\|E(B_\delta(p))f\|^2=\langle E(B_\delta(p))f,f\rangle, 
$$
and the latter tends to zero as $\delta\downarrow 0$ because the hypothesis 
on $\pi$ implies that the probability measure 
defined on $X$ by $\mu(S)=\langle E(S)f,f\rangle$ is 
nonatomic.  Hence $A\in\mathcal N_\pi$.  The spectral theorem implies 
every self adjoint compact operator can be norm approximated 
by linear combinations of rank one projections, 
hence $\mathcal N_\pi\supseteq \mathcal K$.  
\end{proof}

The basic facts that connect $\mathcal N_\pi$ to the structure of $X$ are as follows:  

\begin{prop}\label{loProp3}
If $X$ is countable then $\mathcal N_\pi=\{0\}$ for every separable 
representation $\pi: C(X)\to \mathcal B(H)$.  
If $X$ is uncountable, then there is a separable representation $\pi$ of $C(X)$ 
such that $\mathcal N_\pi$ contains {\em non}-compact operators, and in 
fact $\mathcal N_\pi$ is strongly Morita equivalent to $\mathcal B(H)$.  
\end{prop}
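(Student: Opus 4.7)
The plan is to prove the two halves separately: the countable case falls straight out of $\sigma$-additivity of the spectral measure, while the uncountable case requires exhibiting a concrete infinite-multiplicity representation.

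\textbf{Countable $X$.} Enumerate $X=\{p_1,p_2,\dots\}$ and let $E$ be the spectral measure of a separable representation $\pi\colon C(X)\to\mathcal B(H)$. Countable additivity in the strong operator topology gives $\mathbf 1_H=E(X)=\sum_n E(\{p_n\})$. For any $A\in\mathcal N_\pi$, the inclusion $\{p_n\}\subseteq B_\delta(p_n)$ yields $\|AE(\{p_n\})\|\le\|AE(B_\delta(p_n))\|\to 0$ as $\delta\to 0$, so $AE(\{p_n\})=0$ for every $n$. For any $\xi\in H$ one then computes $A\xi=A\sum_n E(\{p_n\})\xi=\sum_n AE(\{p_n\})\xi=0$, so $A=0$.

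\textbf{Uncountable $X$.} I will construct a separable representation $\pi$ such that $\mathcal N_\pi$ contains a non-compact operator. First fix a nonatomic Borel probability measure $\mu$ on $X$; such a $\mu$ exists because any uncountable compact metric space is Borel-isomorphic to $[0,1]$ by Kuratowski's theorem, and Lebesgue measure can then be transported. Let $H_0=L^2(X,\mu)$, $H=H_0\otimes\ell^2$, and define $\pi\colon C(X)\to\mathcal B(H)$ by $\pi(f)=M_f\otimes I$; its spectral measure is $E(F)=M_{\chi_F}\otimes I$. Pick any nonzero compact $K\in\mathcal K(H_0)$ and set $A=K\otimes I$. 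The operator $A$ is \emph{not} compact: choosing $\xi\in H_0$ with $K\xi\neq 0$, the vectors $A(\xi\otimes e_n)=K\xi\otimes e_n$ form an orthogonal family of norm $\|K\xi\|$ in the image of the unit ball. For local nullity at $p\in X$, compute $\|AE(B_\delta(p))\|=\|KM_{\chi_{B_\delta(p)}}\|=\|M_{\chi_{B_\delta(p)}}K^*\|$; since $\mu(\{p\})=0$ the projections $M_{\chi_{B_\delta(p)}}$ converge strongly to $0$ as $\delta\to 0$, and compactness of $K^*$ then forces $\|M_{\chi_{B_\delta(p)}}K^*\|\to 0$. The symmetric argument gives $\|A^*E(B_\delta(p))\|\to 0$, so $A\in\mathcal N_\pi$. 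Since $\mathcal N_\pi$ contains a non-compact operator, the two-sided ideal it generates in $\mathcal B(H)$ strictly contains $\mathcal K$ and therefore equals $\mathcal B(H)$, putting us in case (c) of Remark \ref{loRem1}; the strong Morita equivalence to $\mathcal B(H)$ follows.

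\textbf{Main obstacle.} The only real subtlety is the norm bound $\|KM_{\chi_{B_\delta(p)}}\|\to 0$. The naive implication ``$T_n\to 0$ strongly and $K$ compact $\Rightarrow KT_n\to 0$ in norm'' is \emph{false}; for instance, with $K=e_0e_0^*$ and $T_n=(S^*)^n$ the backward shift powers on $\ell^2$, one has $\|KT_n\|=1$ for every $n$. The correct statement is its mirror image: $T_nK\to 0$ in norm when $T_n\to 0$ strongly and remains bounded, because $K$ maps the unit ball into a norm-compact set on which strong convergence is uniform. Passing to adjoints is what makes that fact applicable here, and recognizing this is the one technical point that cannot be taken for granted.
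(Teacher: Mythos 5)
Your proof is correct and takes essentially the same route as the paper: the countable case rests on the spectral measure being concentrated on the countable set $X$ (you get this directly from $\sigma$-additivity over singletons, where the paper invokes the decomposition into disjoint factor representations), and the uncountable case uses the identical construction, a nonatomic $\mu$ and the infinite-multiplicity multiplication representation $M_f\otimes I$ on $L^2(X,\mu)\otimes\ell^2$ with $K\otimes I$ as the non-compact locally null operator. The only other difference is that you verify local nullity of $K\otimes I$ by hand via the (correctly handled) ``strongly null times compact is norm null'' lemma, whereas the paper quotes Proposition \ref{loProp2}; both are sound.
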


\begin{proof} Assume that $X$ is countable and let $\pi: C(X)\to\mathcal B(H)$ be 
a separably acting representation.  The set of factor representations 
of $C(X)$  being countable ($\cong X$), 
reduction theory shows that $\pi$ decomposes into a direct sum of disjoint factor representations, which 
in this simple context means 
$$
\pi(f)=\sum_{n\geq 1}^\oplus f(p_n)E_n
$$
where the $E_k$ are a sequence of mutually orthogonal projections with sum $\mathbf 1$ 
and $p_1,p_2,\dots$ is a (finite or infinite) sequence of distinct points of $X$.  Hence 
the 
spectral measure of $\pi$ is atomic and is concentrated on $\{p_1,p_2,\dots\}$.  
It follows that for every operator $A\in \mathcal N_\pi$ we must have 
$$
\|AE_n\|=\inf_{\delta>0}\|AE(B_\delta(p_n))\|=0, \qquad n=1,2,\dots, 
$$
hence $A=\sum_n AE_n=0$.  

Assume now that $X$ is uncountable.  Since $X$ can be viewed a standard Borel space, it 
contains a Borel subset that is isomorphic to the unit interval $[0,1]$, and 
hence $X$ supports a nonatomic Borel probability measure $\mu$.  
Let $H=L^2(X,\mu)$ and let $\pi$ be the usual representation 
of $C(X)$ on $L^2(X,\mu)$ in which $\pi(f)$ acts as multiplication by $f$.  

By Proposition \ref{loProp2}, $\mathcal N_\pi$ contains all compact operators on $H$.  
Now let $\infty\cdot\pi$ be the direct sum of a countably infinite 
number of copies of $\pi$.  For 
every compact operator $K\in \mathcal B(H)$, the direct sum 
$\infty\cdot K=K\oplus K\oplus\cdots$ of copies of $K$ must belong to 
$\mathcal N_{\infty\cdot\pi}$.  Since none of the operators $\infty\cdot K$ is compact 
when $K\neq 0$, Remark \ref{loRem1} implies that 
$\infty\cdot\pi$ is a representation of $C(X)$ with the stated properties.  
\end{proof}

\section{Local uniqueness of UCP extensions}\label{S:ue}

Continuing our discussion of function systems $S\subseteq C(X)$ on compact 
metric spaces $X$, in this section we prove:

\begin{thm}\label{ueThm1}  Given a separable representation $\pi: C(X)\to\mathcal B(H)$, 
let $\phi: C(X)\to \mathcal B(H)$ 
be a UCP map such that $\phi(s)-\pi(s)\in \mathcal N_\pi$ for every $s\in S$.  If every 
point of $X$ belongs to the Choquet boundary $\partial_SX$, then 
$$
\phi(f)-\pi(f)\in\mathcal N_\pi, \qquad \forall \ f\in C(X).  
$$ 
\end{thm}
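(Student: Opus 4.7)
The plan is to argue by contradiction, using uniform local nullity (Proposition \ref{loProp1}) to localize the problem at a single point $p \in X$, uniqueness of representing measures on the Choquet boundary to pin down the probability measure generated by a well-chosen test vector, and Kadison's Schwarz inequality to squeeze the defect $\phi(f) - \pi(f)$ to zero on that vector.

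Since $\phi$ is $*$-preserving, $(\phi(f) - \pi(f))^* = \phi(\bar f) - \pi(\bar f)$, so in view of the definition of $\mathcal N_\pi$ and the uniform-in-$p$ equivalence of Proposition \ref{loProp1}, it suffices to prove the one-sided statement
$$\sup_{p\in X}\|(\phi(f)-\pi(f))E(B_\delta(p))\| \to 0 \quad \text{as } \delta\to 0$$
for every $f \in C(X)$; applied to both $f$ and $\bar f$ this gives $\phi(f)-\pi(f) \in \mathcal N_\pi$. Suppose to the contrary that for some $f$ this fails, so there exist $\alpha > 0$, $\delta_n \downarrow 0$ and $p_n \in X$ with $\|AE(B_{\delta_n}(p_n))\|\geq \alpha$, where $A = \phi(f) - \pi(f)$. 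Choose unit vectors $\xi_n$ in the range of $E(B_{\delta_n}(p_n))$ with $\|A\xi_n\| \geq \alpha/2$, and by compactness of $X$ pass to a subsequence with $p_n \to p$. The objective is now to contradict this by showing $\|A\xi_n\|\to 0$.

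Associate two probability measures on $X$ with each $\xi_n$: the spectral measure $\mu_n(F) = \langle E(F)\xi_n,\xi_n\rangle$, and the Riesz representative $\nu_n$ of the state $g \mapsto \langle \phi(g)\xi_n,\xi_n\rangle$ on $C(X)$. Since $\mu_n$ is supported in $B_{\delta_n}(p_n)$ and these balls contract to $\{p\}$, we have $\mu_n \to \delta_p$ weak$^*$. Using $\xi_n = E(B_{\delta_n}(p_n))\xi_n$,
$$|\nu_n(s) - \mu_n(s)| \leq \|(\phi(s)-\pi(s))E(B_{\delta_n}(p_n))\|,$$
and the hypothesis $\phi(s) - \pi(s) \in \mathcal N_\pi$ together with Proposition \ref{loProp1} makes the right side tend to $0$ uniformly in $p_n$. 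Hence $\nu_n(s) \to s(p)$ for every $s \in S$, and any weak$^*$ cluster point $\nu$ of $\{\nu_n\}$ is a probability measure representing $p$ on $S$; the Choquet boundary hypothesis forces $\nu = \delta_p$, so $\nu_n \to \delta_p$ weak$^*$.

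To close the loop, continuity of $f$ on the shrinking balls $B_{\delta_n}(p_n)$ together with $\xi_n = E(B_{\delta_n}(p_n))\xi_n$ yields the norm estimate $\|\pi(f)\xi_n - f(p)\xi_n\| \to 0$. Expand
$$\|A\xi_n\|^2 = \langle \phi(\bar f)\phi(f)\xi_n,\xi_n\rangle - 2\,\mathrm{Re}\,\langle \phi(\bar f)\pi(f)\xi_n,\xi_n\rangle + \langle \pi(|f|^2)\xi_n,\xi_n\rangle,$$
where the middle cross-terms combine into a real quantity because $\phi(\bar f) = \phi(f)^*$. Kadison's Schwarz inequality $\phi(\bar f)\phi(f)\leq\phi(|f|^2)$ bounds the first term by $\nu_n(|f|^2)\to |f(p)|^2$; the middle term, using $\pi(f)\xi_n \to f(p)\xi_n$ in norm and $\nu_n(\bar f)\to\overline{f(p)}$, converges to $|f(p)|^2$; the last is $\mu_n(|f|^2)\to|f(p)|^2$. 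Therefore $\limsup_n\|A\xi_n\|^2 \leq |f(p)|^2 - 2|f(p)|^2 + |f(p)|^2 = 0$, contradicting $\|A\xi_n\|\geq\alpha/2$. The main technical delicacy will be bookkeeping the adjoints in the Schwarz expansion and confirming that the one-sided Schwarz slack on the leading term is harmless — it is, since that slack is nonnegative while the remaining three terms already cancel to zero in the limit.
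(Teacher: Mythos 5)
Your proof is correct. It shares the paper's two essential ingredients -- localization at a point $p$ via shrinking spectral projections and the Kadison--Schwarz inequality $\phi(f)^*\phi(f)\leq\phi(|f|^2)$ -- but it implements the key step differently. Where the paper proves a quantitative estimate (Proposition \ref{ueProp2}) bounding $\limsup_n\|\phi(f)E(B_{1/n}(p))\|^2$ by the upper envelope $\inf\{s(p):s\in S,\ s\geq|f|^2\}$, and then invokes the minimax formula of Lemma \ref{ueLem1} (which rests on the Hahn--Banach theorem, via \cite{GlickFandM}) to identify that envelope with $|f(p)|^2$, you feed the Choquet boundary hypothesis in directly: the states $g\mapsto\langle\phi(g)\xi_n,\xi_n\rangle$ are shown to converge weak$^*$ to $\delta_p$ because every cluster point is a representing measure for $p$. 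This makes your argument self-contained (no envelope lemma needed) at the cost of being a contradiction argument with no quantitative content; the paper's Proposition \ref{ueProp2} by contrast holds for arbitrary function systems and isolates exactly where the boundary hypothesis is used. The remaining differences are cosmetic: the paper normalizes $f(p)=0$ so that the cross terms in $\|(\phi(f)-\pi(f))\xi_n\|^2$ never appear, whereas you carry the four-term expansion explicitly and check, correctly, that the one-sided Schwarz slack only helps. Your opening reduction -- that it suffices to control $\|(\phi(f)-\pi(f))E(B_\delta(p))\|$ for every $f$, since the adjoint condition in the definition of $\mathcal N_\pi$ is then obtained by replacing $f$ with $\bar f$ -- is a point the paper leaves implicit, and it is good that you made it explicit.
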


The proof of Theorem \ref{ueThm1} requires the following estimate:

\begin{prop}\label{ueProp2} 
Let $S\subseteq C(X)$ be an arbitrary function system and let 
$\phi: C(X)\to \mathcal B(H)$ be a UCP map 
with the property 
$$
\phi(g)-\pi(g)\in\mathcal N_\pi,\qquad \forall g\in S.  
$$ 
Then for $p\in X$ and every $f\in C(X)$ 
we have 
\begin{equation}\label{ueEq1}
\limsup_{n\to\infty}\|\phi(f)E(B_{1/n}(p))\|^2\leq 
\inf\{s(p): s\in S, \ s\geq |f|^2\}. 
\end{equation}
\end{prop}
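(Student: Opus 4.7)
The plan is to use the Schwarz inequality for UCP maps together with the localization property that defines $\mathcal{N}_\pi$. Specifically, I would fix a self-adjoint $s \in S$ with $s \geq |f|^2$ and show that
$$\limsup_{n\to\infty}\|\phi(f)E(B_{1/n}(p))\|^2 \leq s(p);$$
the conclusion then follows by taking the infimum over admissible $s$.

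First I would write
$$\|\phi(f)E(B_{1/n}(p))\|^2 = \|E(B_{1/n}(p))\phi(f)^*\phi(f)E(B_{1/n}(p))\|,$$
and apply the Kadison–Schwarz inequality $\phi(f)^*\phi(f)\leq\phi(|f|^2)$ to the UCP map $\phi$, followed by $\phi(|f|^2)\leq \phi(s)$ (since $s\geq |f|^2$ and $\phi$ is positive). Sandwiching by the projection $E(B_{1/n}(p))$ preserves the inequality, so
$$\|\phi(f)E(B_{1/n}(p))\|^2 \leq \|E(B_{1/n}(p))\phi(s)E(B_{1/n}(p))\|.$$

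Next I would split $\phi(s) = \pi(s) + (\phi(s)-\pi(s))$ and estimate the two pieces separately. For the $\pi(s)$ piece, since $E$ is the spectral measure of $\pi$, the projection $E(B_{1/n}(p))$ commutes with $\pi(s)$, and by the functional calculus
$$\|E(B_{1/n}(p))\pi(s)E(B_{1/n}(p))\| \leq \sup_{x\in B_{1/n}(p)} |s(x)|,$$
which, by continuity of $s$, tends to $s(p)$ as $n\to\infty$ (and in fact equals $s(p)$ in the limit, since $s(p)\geq |f(p)|^2\geq 0$). For the error piece, the hypothesis $\phi(s)-\pi(s)\in\mathcal N_\pi$ together with Proposition \ref{loProp1} gives
$$\lim_{n\to\infty}\|(\phi(s)-\pi(s))E(B_{1/n}(p))\| = 0,$$
so its compressed norm vanishes in the limit.

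Combining the two estimates yields $\limsup_{n\to\infty}\|\phi(f)E(B_{1/n}(p))\|^2 \leq s(p)$, and taking the infimum over all real $s\in S$ with $s\geq |f|^2$ gives the stated bound. The only subtle point — and the closest thing to an obstacle — is ensuring that the right-hand side is meaningful: the inequality $s\geq |f|^2$ requires $s$ to be self-adjoint, so implicitly we are using that the self-adjoint part of $S$ (which dominates $|f|^2\in C(X)$ from above, thanks to $\mathbf 1\in S$ and boundedness of $|f|^2$) provides enough test functions to make the infimum finite and computable. The rest is a routine bookkeeping of the Schwarz inequality and the definition of $\mathcal N_\pi$.
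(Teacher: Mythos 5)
Your proof is correct and follows essentially the same route as the paper's: the Schwarz inequality $\phi(f)^*\phi(f)\leq\phi(|f|^2)\leq\phi(s)$, the decomposition $\phi(s)=\pi(s)+(\phi(s)-\pi(s))$ with the second term killed by the $\mathcal N_\pi$ hypothesis, and continuity of $s$ at $p$. The only difference is bookkeeping --- you compress by the spectral projections and estimate operator norms directly, whereas the paper evaluates the same quantities against near-maximizing unit vectors $\xi_n\in E(B_{1/n}(p))H$ --- and this does not change the substance of the argument.
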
  

\begin{proof} For each $n=1,2,\dots$ choose a unit vector $\xi_n\in E(B_n(p))H$ such 
that 
\begin{equation}\label{ueEq2}
\|\phi(f)E(B_{1/n}(p))\|^2\leq \|\phi(f)\xi_n\|^2+\frac{1}{n},   
\end{equation}
and fix a function $s\in S$ satisfying 
$s\geq |f|^2$.  Then   
\begin{equation*}
\|\phi(f)\xi_n\|^2=\langle\phi(f)^*\phi(f)\xi_n,\xi_n\rangle\leq \langle\phi(|f|^2)\xi_n,\xi_n\rangle
\leq\langle\phi(s)\xi_n,\xi_n\rangle.  
\end{equation*}
Now fix $\epsilon>0$.  Since $\xi_n$ is a unit vector in  $E(B_{1/n}(p))H$, 
it follows from the hypothesis $\phi(s)-\pi(s)\in \mathcal N_\pi$ that 
for sufficiently large $n$ we will have 
$$
|\langle (\phi(s)-\pi(s))\xi_n,\xi_n\rangle|\leq \|(\phi(f)-\pi(f))E(B_{1/n}(p))\|\leq \epsilon
$$
and therefore 
$$
\|\phi(f)\xi_n\|^2\leq\langle\phi(s)\xi_n,\xi_n\rangle \leq \langle \pi(s)\xi_n,\xi_n\rangle+\epsilon
= \int_X s(x)\langle E(dx)\xi_n,\xi_n\rangle+\epsilon.  
$$
Since $\xi_n\in B_{1/n}(p))$, the measure $\langle E(\cdot)\xi_n,\xi_n\rangle$ is 
supported on the closure of $B_{1/n}(p)$.  Hence the term on the right is dominated by 
$$
\sup\{s(x): d(x,p)\leq 1/n\}+\epsilon 
$$ 
which, by continuity of $s$ at $p$, is in turn dominated by 
$s(p)+2\epsilon$ for sufficiently large $n$.  Finally, since $\epsilon$ 
can be arbitrarily small, we obtain
\begin{equation*}
\limsup_{n\to\infty}\|\phi(f)\xi_n\|^2\leq s(p).  
\end{equation*}
From (\ref{ueEq2}) we conclude that 
$$
\limsup_{n\to\infty}\|\phi(f)E_n\|^2\leq s(p),  
$$
and the estimate (\ref{ueEq1}) follows after taking the infimum over $s$.  
\end{proof}

We will also make use of  the following property of points with unique representing measures, a 
consequence 
of a more general 
minimax principle based on the Hahn-Banach theorem (see formula (1.2) of \cite{GlickFandM}):

\begin{lem}\label{ueLem1}
Let $u$ be a real function in $C(X)$ and let 
$p$ be a point in the Choquet boundary of $X$ relative to $S$.  Then 
$$
u(p)=\inf\{s(p): s\in S, \ s\geq u\}.  
$$
\end{lem}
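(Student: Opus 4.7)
The inequality $u(p) \le \inf\{s(p):s\in S,\ s\ge u\}$ is trivial, since $s\ge u$ pointwise on $X$ forces $s(p)\ge u(p)$, so it remains to prove the reverse inequality. The plan is to deduce this as a direct specialization of the operator-algebraic minimax principle already established in Proposition \ref{gnProp1}.

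Concretely, I would apply the second formula of Proposition \ref{gnProp1} with the \cstar{} $B=C(X)$, the operator system $S\subseteq C(X)$, the self-adjoint element $x=u$, and the state $\phi$ of $S$ defined by point evaluation $\phi(s)=s(p)$. The proposition then gives
\[
\inf\{s(p):s=s^*\in S,\ s\ge u\}=\max\{\rho(u):\rho\in E_\phi\},
\]
where $E_\phi$ is the set of state extensions of $\phi$ to $C(X)$. By the Riesz representation theorem, each $\rho\in E_\phi$ corresponds to a Borel probability measure $\mu$ on $X$ satisfying $\int s\,d\mu=s(p)$ for all $s\in S$, that is, to a representing measure for $p$ relative to $S$. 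Since $p\in\partial_S X$, the only such measure is $\delta_p$, so $E_\phi=\{\delta_p\}$ and the right hand side equals $u(p)$.

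Two small points need to be checked to be sure the application is clean. First, Proposition \ref{gnProp1} restricts the infimum to self-adjoint elements of $S$; since $S$ is a function system closed under complex conjugation, the relevant $s\in S$ with $s\ge u$ are automatically real-valued, so the restriction is harmless. Second, the fact that state extensions of $\phi$ from $S$ to $C(X)$ are exactly the representing measures for $p$ is immediate from the definition, together with Krein's theorem which guarantees at least one such extension exists.

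Thus the lemma follows with essentially no further work from Proposition \ref{gnProp1}; there is no real obstacle, the content being entirely encoded in the Hahn–Banach/Krein extension argument that was already carried out in the proof of that proposition.
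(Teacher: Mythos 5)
Your proof is correct. The paper does not actually prove Lemma \ref{ueLem1}; it simply cites it as a consequence of an abstract minimax principle (formula (1.2) of \cite{GlickFandM}), and your derivation---specializing the paper's own Proposition \ref{gnProp1} to $B=C(X)$ with $\phi$ the evaluation state at $p$, identifying $E_\phi$ with the representing measures for $p$ via Riesz representation, and invoking uniqueness of the representing measure at a Choquet boundary point---is exactly that Hahn--Banach/Krein minimax argument, so it is essentially the same approach, merely made self-contained within the paper. The only hypothesis of Proposition \ref{gnProp1} worth a passing word is that $S$ generate $B$ as a \cstar; under the standing assumptions of Part \ref{P:local} ($S$ separates points, contains the constants, and is conjugation-closed) this holds by Stone--Weierstrass, and in any case the proof of Proposition \ref{gnProp1} only uses Krein extension from $S+\mathbb C\cdot x$ to $B$, so nothing is lost.
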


\begin{proof}[Proof of Theorem \ref{ueThm1}]  Fix $f\in C(X)$ and let $A=\phi(f)-\pi(f)$.  We have to show that for every 
point $p\in X$ 
\begin{equation}\label{ueEq4} 
\lim_{n\to\infty}\|AE(B_{1/n}(p))\|=0.  
\end{equation}
Fixing $p$, note that by replacing $f$ with $f-f(p)\mathbf 1$, it suffices to prove 
(\ref{ueEq4}) for functions $f$ that vanish at $p$.   For such a function $f$ we claim first that  
\begin{equation}\label{ueEq3}
\lim_{n\to\infty}\|\pi(f)E(B_{1/n}(p))\|=0
\end{equation}
Indeed, we have 
$$
\|\pi(f)E(B_{1/n}(p))\|=\|\int_{B_{1/n}(p)}f(x)\,E(dx)\|\leq \sup_{x\in B_{1/n}(p)}|f(x)|,
$$
and the term on the right tends to $|f(p)|=0$ as $n\to\infty$.  

So to prove (\ref{ueEq4}), we have to show that $\|\phi(f)E(B_{1/n}((p))\|$ tends to 
zero as $n\to\infty$.  To see that, note that Proposition 
\ref{ueProp2} implies 
\begin{equation}\label{ueEq5}
\limsup_{n\to\infty}\|\phi(f)E(B_{1/n}(p))\|\leq \inf\{s(p): s\in S,\ s\geq |f|^2\}.  
\end{equation}
Since $p$ belongs to the Choquet boundary,  Lemma \ref{ueLem1} implies 
that the right side of (\ref{ueEq5}) is $|f(p)|^2=0$.  Thus  
(\ref{ueEq4}) is proved.  
\end{proof}

\bibliographystyle{alpha}

\newcommand{\noopsort}[1]{} \newcommand{\printfirst}[2]{#1}
  \newcommand{\singleletter}[1]{#1} \newcommand{\switchargs}[2]{#2#1}

\end{document}